\crefname{equation}{}{}
\crefname{lem}{Lemma}{Lemmas}
\crefname{section}{Section}{Sections}
\crefname{subsubsubsection}{Section}{Sections}
\crefname{rem}{Remark}{Remarks}
\crefname{figure}{Figure}{Figures}
\crefname{table}{Table}{Tables}
\Crefname{lem}{Lemma}{Lemmas}
\crefname{thm}{Theorem}{Theorems}
\Crefname{thm}{Theorem}{Theorems}
\Crefname{question}{Question}{Questions}
\newtheorem{thm}{Theorem}[section]
\newtheorem{remark}{Remark}[section]
\newtheorem{rem}[thm]{Remark}
\newtheorem{lem}{Lemma}[section]
\newtheorem{proposition}[thm]{Proposition}
\newtheorem{corollary}[thm]{Corollary}
\newtheorem{question}[thm]{Question}
\theoremstyle{definition}
\theoremstyle{definition}
\newtheorem{defn}{Definition}[section]
\renewenvironment{proof}{\noindent {\bf Proof.}}{\qed}
\newcommand\p{{\bf p}}
\newcommand\q{{\bf q}}
\newcommand{\bp}{{\mathbf p}}
\newcommand{\bq}{{\mathbf q}}
\title{Globally Rigid Convex Braced Polygons}
\author{Robert Connelly\thanks{Department of Mathematics, Cornell University, Ithaca, USA.}
\and Bill Jackson\thanks{School of Mathematical Sciences Queen Mary University of London, England, UK} \and Shin-ichi Tanigawa\thanks{Department of Mathematical Informatics Graduate School of Information Science and Technology
University of Tokyo, Japan} \and  Zhen Zhang\thanks{Yau Mathematical Sciences Center, Tsinghua University, Beijing, China.}}
\date{September 2024}
\begin{document}

\maketitle

\begin{abstract}
Determining the global rigidity of a bar framework is known to be a difficult problem.  Here we propose a class of frameworks in the plane, convex braced polygons, that may be globally rigid and are analogous to convex polyhedra in $3$-space, which are globally rigid in the class of convex polyhedra by a celebrated theorem of Cauchy from 1813.  We show that  all the strictly convex plane realisations of the underlying graph of a convex braced polygon are infinitesimally rigid  if and only if they are all globally rigid in the class of convex braced polygons.  Furthermore, we provide techniques that, for many such graphs, prove that all their strictly convex plane realisations  are globally rigid, and other techniques to find instances when there is a strictly convex plane realisation that is not globally rigid. 
\end{abstract}

\section{Introduction}

\subsection{History}\label{Sect:History}

One of the most fundamental breakthroughs in understanding the rigidity of structures is Cauchy's 1813 result that a convex triangulated surface is ``rigid". (See \cite{Handbook}, vol. A 223–271.)  This is despite a mistake in his proof of what is now called his ``arm lemma", and also a lack of interest in the subject of rigidity for many years.  Nevertheless, the ideas in his proof are correct, and with the passage of time, interest in the subject of rigidity has increased.  (See ``Rigidity through a projective lens" in \cite{projective}.)  The statement of Cauchy's theorem is clear.  There is only one way to assemble a strictly convex triangulated polyhedron, up to congruent copies, that is convex and where vertex adjacencies are preserved. As a consequence, there is no continuous motion of the polyhedron, where each (triangular) face remains congruent during the motion.  The polyhedron is (locally) rigid.

In 1916 Max Dehn \cite{Dehn} proved that the same strictly convex triangulated polyhedron is {\it infinitesimally rigid} in $\mathbb{R}^3$.  This means that, when there are vectors $\p'=(p'_1, \dots, p'_n)$ attached to the corresponding vertices of the configuration $(G,\p)$ that act as the first derivative of a finite motion, preserving edge (bar) lengths, then they form the derivative of a congruence.  This is equivalent to saying that the rank of a matrix, called the \emph{rigidity matrix} $R(\p)$, is maximal, namely $3n-6$, where $n$ is the number of vertices of the polyhedron.  This also implies that there is no continuous motion, a flex, of the framework that preserves the length of each edge.  In the case of a triangulated polyhedron,  each (triangular) face remains congruent during the motion.  A priori, Dehn's theorem looks to be somewhat weaker than Cauchy's theorem, and, indeed, in 1975 Herman Gluck \cite{Gluck} showed that there is a relatively easy way to prove Dehn's infinitesimal theorem using the combinatorial idea of Cauchy's theorem.  It turns out, anyway, that it is easy to show that Cauchy's theorem directly implies Dehn's theorem using Proposition \ref{push-pull} below, which is related to some averaging techniques going back to \cite{averaging}, \cite{Saliola-Whiteley}, and feels like something Minkowski would have done.  

Meanwhile, the question of determining the ``rigidity" of a bar framework, even in the plane, was considered as a computational question.  
 In this not-necessarily convex setting, it is often assumed that the coordinates of the configuration $\p$ are {\it generic}, that is, they do not satisfy any non-zero polynomial equation with integer coordinates. In this case, the (local) rigidity of the framework $(G,\p)$ only depends on the graph $G$ and not on the particular configuration in  $\mathbb{R}^d$.  (Geiringer, Laman)\cite{Geiringer, Laman} characterised graphs which are generically rigid  in $\mathbb{R}^2$, and Lov\'asz and Yemini \cite{Lovasz}, used matriod methods to show there is a combinatorial polynomial-time algorithm to determine the generic rigidity of a graph. Indeed, Bruce Hendrickson and Jacobs \cite{Hendrickson}  have an elementary algorithm, the pebble game, that is easily implemented to compute the generic rigidity of a graph $G$ in the plane.
 
 In $\mathbb{R}^3$ and higher, nobody has found a polynomial-time deterministic algorithm to determine generic rigidity of the graph $G$.  On the other hand, one can simply compute the rank of the rigidity matrix $R(\p)$ for $(G,\p)$ for a randomly chosen configuration $\p$ in $\mathbb{R}^d$, and with high probability that will determine the generic rigidity of the graph $G$.

With Cauchy's result about strict convexity, there is no need to assume that the configuration is generic.  The convexity assumption is enough. On the other hand, Gluck noticed that since the space of strictly convex realizations is an open set, it automatically implies that the graph of a triangulated sphere is generically rigid in $\mathbb{R}^3$.  (See also the result of Fogelsanger \cite{Fogelsanger} about the generic rigidity of other triangulated manifolds.)

Another, closely related notion of local rigidity, and infinitesimal rigidity, is the {\it global rigidity} of a bar framework $(G,\p)$ in  Euclidean space $\mathbb{R}^d$.  This means that $(G,\p)$ is the only configuration in $\mathbb{R}^d$ with the corresponding bars the same length, up to congruent copies.  Global rigidity can be much harder to test, but if we are only interested in determining the global rigidity of a framework $(G,\p)$ at generic configurations, the problem becomes more tractable.

Still there is another problem, even if the configuration $\p$ is known exactly.  A test for generic global rigidity (GGR) is that there is an equilibrium stress $\omega$ for a given framework $(G,\p)$ with a corresponding stress matrix $\Omega$ that has maximal rank.  The top two frameworks of Figure \ref{fig:Jackson} both have a one-dimensional stress with a stress matrix $\Omega$ that has maximal rank. In this case the rank is $n-d-1=8-2-1=5$, where $n=8$ is the number of vertices, and $d=2$ is the dimension of the ambient space.  The frameworks are both infinitesimally rigid in the plane, and the vertices form strictly convex polygons.  The two configurations have corresponding edge lengths the same and are not congruent, even though they testify to the generic rigidity of the corresponding graph. When one knows that the graph $G$ is generically globally rigid, in practice, it can be extremely tedious, and at least difficult, to determine the global rigidity of a given framework $(G,\p)$, even in the plane.  (In \cite{Global} it is shown that this is an NP-complete problem to determine.)  It is possible, with a given graph $G$, to find frameworks that are globally rigid in all higher dimensions, starting in a given lower dimension, universal rigidity, as in \cite{OT,Oba}, but this may not deal with one's favorite configuration.   Knowing that a graph is generically globally rigid in a  given dimension does not tell us anything about the global rigidity of a particular realisation.

\subsection{Our results}\label{Sect:results}

If one is given a particular framework, or a class of frameworks, as described below, how can one determine global rigidity?  We propose, here, some tools to do that. 

In Section \ref{Sect:definitions}, we consider the class of strictly ``convex braced polygons".
 We show that  all the strictly convex plane realisations of the underlying graph of a convex braced polygon  are infinitesimally rigid  if and only if they are all globally rigid in the class of convex braced polygons (Theorem \ref{convex}). 
This makes it easier to test if a particular configuration in that class has a strictly convex configuration that is not globally rigid, given that one can know when it is infinitesimally rigid.  For general graphs, it might be more helpful to use something like the ``pure condition" in \cite{pure-condition} to find the critical configurations, where the rigidity matrix $R(\p)$ drops rank, for instance. See also \cite{projective} for a history of some of the classical methods for computing the (infinitesimal) rigidity of bar frameworks.

In Section \ref{Sect:Braced}, we show that, if a graph of a convex braced polygon  is minimally 3-connected with respect to its set of braces, then the property that all its strictly convex plane realisations are globally rigid  (the property studied in Section~\ref{Sect:definitions}) is equivalent to the property
that every strictly convex plane realization has a proper stress, i.e., a stress which is positive on the boundary edges  and negative on the braces (Theorem~\ref{thm:minimally}). 
The latter property implies global rigid by a theorem of Connelly\cite[Theorem 5.14.4]{book}.
Theorem~\ref{thm:minimally} is closely related to an earlier result of Geleji and Jord{\'a}n~\cite{robust} on convex braced polygons whose underlying graph is a 3-connected circuit in the generic 2-dimensional rigidity matroid.
 Geleji and Jord{\'a}n~\cite{robust} characterized when such a graph has  the property that all its strictly convex realizations have 
 a proper stress.
We shall give a new characterization of this property in Theorem \ref{m3p}. 
Geleji and Jord{\'a}n~\cite{robust} focused on a special combinatorial property of the graphs of convex braced  polygons, called the ``unique interval property" (Definition \ref{interval}). Our proof of Theorem \ref{m3p} makes no mention of that property, and uses minimal 3-connectivity instead.
We will derive a combinatorial relation between minimal 3-connectivity and the unit interval property in the appendix.


In Section \ref{Sect: Examples}, we will give some examples that cannot be resolved by the results in the previous sections. We describe some techniques which can be used to verify their global rigidity as well as an averaging technique which can verify non-global rigidity.  One very useful method is to use superposition of configurations that can work very efficiently with both the stress matrix $\Omega$ and the rigidity matrix $R(\p)$. These techniques can be used to determine whether any graph of a convex braced polygon with at most 7 vertices is globally rigid at all strictly convex realisations in the plane, see  Figure \ref{fig:super}.  We also provide a table of the graphs of convex braced polygons on at most 7 vertices whose strictly convex realisations are globally rigid when restricted to the class of all strictly convex realisations, see Figure \ref{fig:convexly}.
In some cases, one can do calculations simply by seeing how configurations cross thresholds without doing computer calculations.  See Figures \ref{fig:visual} and \ref{fig:plates} for an example.
(This reminds one of some of the ``catastrophe" techniques in \cite{Catastrophe}, although Catastrophe Theory, itself, does not seem helpful.)   A graph  on eight vertices for which we cannot determine whether all strictly convex realisations are globally rigid is also provided (Figure \ref{fig:Grunbaum}), together with other unsolved natural questions.

\section{Convex Braced Polygons}\label{Sect:definitions}

We propose a class of planar frameworks, motivated from Cauchy's Theorem in dimension three, where it may be easier to determine their infinitesimal rigidity, and even global rigidity.

 A $d$-dimensional \textit{bar framework} is a pair $(G,\p)$ , where $G$ is a finite graph whose vertices correspond to a {\it configuration} $\p=(p_1,\dots, p_n)$ of points $p_i$, $i=1\dots, n$ in $d$-dimensional  Euclidean space.  The edges of $G$ correspond to fixed length bars, but not necessarily of the same length, connecting the vertices of $(G,\p)$.
 We will also refer to $(G,\p)$ as a {\em realisation} of $G$ in $\mathbb{R}^d$.
 We will say that $(G,\p)$ and $(G,\q)$ {\em have corresponding bars the same length} to mean that, for each edge $ij$ of $G$, $\|p_i-p_j\|= \|q_i-q_j\|$. Two $d$-dimensional frameworks $(G,\p)$ and $(G,\q)$ are {\it congruent} if $\|p_i-p_j\|=\|q_i-q_j\|$, for all $1\le i <j\le n$.

\begin{defn}
For a bar framework $(G,\p)$ in $\mathbb{R}^d$, and $\p'=(p'_1,\dots, p'_n)$, regarded as a configuration of vectors $p'_i$ in $\mathbb{R}^d$, we say $\p'$ is an {\em infinitesimal flex} of $(G,\p)$ if, for each edge $ij$ of $G$, we have
\[
(\p_i-\p_j)\cdot (\p_i'-\p'_j)=0.
\]
We say that such a $\p'$ is {\it trivial} if it is the derivative of a smooth family of motions of congruences of $\mathbb{R}^d$ restricted to $\p$.
A bar framework $(G,\p)$ in $\mathbb{R}^d$ is {\it infinitesimally rigid} if every infinitesimal flex $\p'$ of $(G,\p)$ is trivial.
\end{defn}

\begin{defn}
A bar framework $(G,\p)$ in $\mathbb{R}^d$ is {\it globally rigid} in $\mathbb{R}^d$ if every other bar framework $(G,\q)$ in $\mathbb{R}^d$,  whose corresponding edge lengths are the same,  is congruent to $(G,\p)$. 
\end{defn}

\begin{defn}
A bar framework $(G,\p)$ in $\mathbb{R}^d$ is {\it universally rigid}  if every other bar framework $(G,\q)$ in $\mathbb{R}^D$, for all $ D \ge d$,  whose corresponding edge lengths are the same,  is congruent to $(G,\p)$. 
\end{defn}

A convex polygon in the plane is {\it strictly convex} if each vertex  is in a line that intersects the polygon only at this vertex.
Suppose that a (strictly) convex polygon in the plane has edges with fixed length and some additional edges, also of fixed length, connecting some of the polygon vertices. We call this a {\it (strictly) convex braced polygon}, and we regard it as a bar framework in the plane.  
For a convex braced polygon we assume that the vertices $(p_1,\dots, p_n)$ form a convex polygon with each edge $i,i+1$, modulo $n$, corresponding to a boundary edge of the polygon.
The underlying graph of a convex braced polygon is called a {\em braced polygon graph}.
It is a Hamiltonian graph with a distinguished Hamilton cycle.

Motivated by Cauchy's rigidity theorem, we introduce the following rigidity notion for strictly convex braced polygons. 

\begin{defn}
A braced strictly convex polygon $(G,\p)$ in $\mathbb{R}^2$ is {\it convexly rigid} if every other strictly convex braced polygon $(G,\q)$,  whose corresponding edge lengths are the same,  is congruent to $(G,\p)$. In addition, we say that a braced polygon graph is {\em convexly rigid} if all its strictly convex configurations are convexly rigid.   
\end{defn}

We are concerned here with the problem of deciding when a braced polygon graph has the property that all its strictly convex realisations are rigid in four senses, infinitesimally rigid, convexly rigid, globally rigid, and universally rigid.  
The first result in this paper is the following theorem.

\begin{thm}\label{convex}
 For a given braced polygon graph $G$, all strictly convex braced polygon frameworks $(G,\p)$ are infinitesimally rigid if and only if they are all convexly rigid.
\end{thm}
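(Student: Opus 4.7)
I plan to prove the two directions separately. For ``convexly rigid $\Rightarrow$ infinitesimally rigid'' I would argue by contrapositive: suppose some strictly convex realisation $(G,\p_0)$ admits a non-trivial infinitesimal flex $\p'_0$, and set $\p_\epsilon:=\p_0+\epsilon\p'_0$ and $\q_\epsilon:=\p_0-\epsilon\p'_0$. For $\epsilon>0$ small, I would check: (i) openness of strict convexity makes both $\p_\epsilon$ and $\q_\epsilon$ strictly convex; (ii) the flex condition kills the first-order term in
\[
\|(\p_\epsilon)_i-(\p_\epsilon)_j\|^2 = \|p_i-p_j\|^2 + \epsilon^2\|p'_i-p'_j\|^2,
\]
and the same formula holds for $\q_\epsilon$, so all corresponding bars have the same length; (iii) they are not congruent, since any congruence $T_\epsilon$ taking $\q_\epsilon$ to $\p_\epsilon$ would tend as $\epsilon\to 0$ to an isometry fixing every vertex of the strictly convex $\p_0$, necessarily the identity, and then differentiating $T_\epsilon(\q_\epsilon)=\p_\epsilon$ at $\epsilon=0$ would realise $\p'_0$ as an infinitesimal congruence of $\p_0$, contradicting non-triviality. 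Thus $(G,\p_\epsilon)$ is a strictly convex framework that is not convexly rigid.

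For the direction ``infinitesimally rigid $\Rightarrow$ convexly rigid'', I would take strictly convex realisations $(G,\p)$ and $(G,\q)$ with matching edge lengths and, after reflecting $\q$ if necessary so that both traverse the Hamilton cycle with the same orientation, produce a congruent copy $\q'$ of $\q$ such that the vertex-wise average $r:=\tfrac12(\p+\q')$ is again strictly convex. Granted this, the polarisation identity applied to the equal bar lengths of $\p$ and $\q'$ gives that $r':=\tfrac12(\p-\q')$ is an infinitesimal flex of $(G,r)$, and by hypothesis $(G,r)$ is infinitesimally rigid, so $r'_i=\lambda J r_i + b$ for some $\lambda\in\mathbb{R}$ and $b\in\mathbb{R}^2$, where $J$ denotes the standard $90^\circ$ rotation. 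A short computation using the fact that $(I-\lambda J)(I+\lambda J)^{-1}$ equals the rotation matrix $\tfrac{1}{1+\lambda^2}\left((1-\lambda^2)I-2\lambda J\right)$ then recovers $\p$ from $\q'$ by a rigid motion, so $\p\cong\q$. This averaging argument is in the spirit of Proposition~\ref{push-pull}.

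The main obstacle, and the one place where the planar geometry enters essentially, is ensuring strict convexity of the average. Let $\alpha_1<\cdots<\alpha_n<\alpha_1+2\pi$ and $\beta_1<\cdots<\beta_n<\beta_1+2\pi$ be the edge-direction angles of $\p$ and $\q$ respectively. I would choose the rotation angle $\theta$ applied to $\q$ so that $|\alpha_i-\beta_i-\theta|<\pi$ for every $i$; the sum-to-product identities then show that the edge vector of $r$ along the $i$-th Hamilton edge has direction exactly $\tfrac12(\alpha_i+\beta_i+\theta)$, and these angles are strictly increasing in $i$ and wind once around the circle, forcing $r$ to be strictly convex. Such a $\theta$ exists because $\max_i(\alpha_i-\beta_i)-\min_i(\alpha_i-\beta_i)<2\pi$: if this range were at least $2\pi$, then on the cyclic arc from argmin to argmax one would need the entire $\alpha$-rotation of $2\pi$ to occur while contributing no $\beta$-rotation, which is impossible since both sequences have strictly positive increments summing to $2\pi$ over the full cycle.
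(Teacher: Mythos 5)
Your argument is correct and follows the same high-level strategy as the paper (rotate one realisation so that the vertex-wise average is strictly convex, then exploit the push--pull identities), but the key technical step is handled by a genuinely more elementary route. Where the paper parametrises both boundary polygons by arclength, regards the edge-direction angles as monotone step functions $f,g$, and invokes Lemma~\ref{lem:adjust} to produce the rotation $\alpha$ with $|f(x)-g(x)-\alpha|<\pi$, you argue discretely: for the lifted direction sequences $\alpha_1<\dots<\alpha_n<\alpha_1+2\pi$ and $\beta_1<\dots<\beta_n<\beta_1+2\pi$, any $i<j$ satisfy $(\alpha_j-\beta_j)-(\alpha_i-\beta_i)=(\alpha_j-\alpha_i)-(\beta_j-\beta_i)\in(-2\pi,2\pi)$, so the differences $\alpha_i-\beta_i$ have range strictly less than $2\pi$ and the intervals $(\alpha_i-\beta_i-\pi,\,\alpha_i-\beta_i+\pi)$ have a common point $\theta$. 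That is exactly the conclusion the paper extracts from Lemma~\ref{lem:adjust}, obtained without the arclength machinery; note that, as in the paper, the equal edge lengths are still essential afterwards, to know that the averaged edge points along the bisector $\tfrac12(\alpha_i+\beta_i+\theta)$ (cf.\ Remark~\ref{rem:unequal}). Your forward direction also ends differently: instead of contraposing via Proposition~\ref{reverse} (``the average is a strictly convex realisation that is not infinitesimally rigid''), you use infinitesimal rigidity of the average to write the flex as $\lambda J r_i+b$ and solve explicitly for a rotation carrying $\q'$ to $\p$; this is equivalent, and slightly more constructive.

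One step in your contrapositive direction is glossed. You ``differentiate $T_\epsilon(\q_\epsilon)=\p_\epsilon$ at $\epsilon=0$'', but the congruences $T_\epsilon$ are not known to depend differentiably (or even continuously) on $\epsilon$. This is repairable: write $T_\epsilon(x)=A_\epsilon x+b_\epsilon$, use $(A_\epsilon-I)p_i+b_\epsilon=\epsilon\,(p'_i+A_\epsilon p'_i)$ on an affinely spanning set of points to get convergence of $(A_\epsilon-I)/\epsilon$ and $b_\epsilon/\epsilon$ along a subsequence, and check that the limit of $(A_\epsilon-I)/\epsilon$ is skew, so $\p'_0$ would be a trivial flex. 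But it is simpler, and is what Proposition~\ref{push-pull} does, to note that non-triviality together with full affine span gives a pair $i,j$ with $(p_i-p_j)\cdot(p'_i-p'_j)\neq 0$, whence $\|(\p_\epsilon)_i-(\p_\epsilon)_j\|\neq\|(\q_\epsilon)_i-(\q_\epsilon)_j\|$ and the two frameworks fail to be congruent for every $\epsilon>0$, with no limiting argument at all.
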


One direction of this theorem follows from the following proposition, which is related to ideas of Minkowski.

\begin{proposition}\label{push-pull}
Let  $(G,\p)$ be a bar framework whose points affinely span  $\mathbb{R}^d$ and $\p'$ be a non-trivial infinitesimal flex  of $(G,\p)$.
Then $(G,\p+\p')$ and $(G,\p-\p')$ have corresponding bar lengths the same, but they are not congruent. 
\end{proposition}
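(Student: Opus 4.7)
The proof splits cleanly into two parts: (i) showing equal edge lengths and (ii) showing non-congruence. For (i) I would just expand
\[
\|(\p_i\pm\p'_i)-(\p_j\pm\p'_j)\|^2=\|\p_i-\p_j\|^2\pm 2(\p_i-\p_j)\cdot(\p'_i-\p'_j)+\|\p'_i-\p'_j\|^2,
\]
and observe that the cross term vanishes on every edge $ij$ by the definition of infinitesimal flex, so both expressions equal $\|\p_i-\p_j\|^2+\|\p'_i-\p'_j\|^2$. This is the easy half.

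For (ii) I would argue by contradiction. Suppose $(G,\p+\p')$ and $(G,\p-\p')$ are congruent; then there is an orthogonal matrix $A\in O(d)$ and a vector $b\in\mathbb{R}^d$ with $A(\p_i-\p'_i)+b=\p_i+\p'_i$ for every vertex $i$. Rearranging gives the key linear relation
\[
(I+A)\p'_i=(A-I)\p_i+b \qquad (i=1,\dots,n).
\]
The goal is to use this to show that $\p'$ must be trivial, i.e.\ of the form $\p'_i=S\p_i+c$ with $S$ skew-symmetric, contradicting the hypothesis.

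The argument now splits according to whether $-1$ is an eigenvalue of $A$. If it is \emph{not}, then $I+A$ is invertible and, because $A$ is orthogonal, the Cayley transform $S:=(A-I)(I+A)^{-1}$ is skew-symmetric; setting $c:=(I+A)^{-1}b$ gives $\p'_i=S\p_i+c$, a trivial flex. If $-1$ \emph{is} an eigenvalue, let $V=\ker(I+A)\neq\{0\}$; since $A\in O(d)$, both $V$ and $V^\perp$ are $A$-invariant, and on $V$ we have $A=-I$, so $I+A$ vanishes on $V$. Projecting the key relation onto $V$ yields $0=-2P_V\p_i+P_V b$, forcing $P_V\p_i$ to be the same vector for every $i$. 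But this means the vectors $\p_i-\p_1$ all lie in $V^\perp\subsetneq\mathbb{R}^d$, contradicting the hypothesis that $\p$ affinely spans $\mathbb{R}^d$. Hence this case is impossible, and we are always in the first case, where $\p'$ turns out to be trivial.

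The main obstacle is precisely this second case: one has to see that the non-trivial-kernel part of $I+A$ is exactly where the affine-span assumption becomes indispensable, and that without it the proposition would fail (indeed, for a degenerate configuration supported in a proper affine subspace, a reflection across that subspace would produce a congruence between $(G,\p+\p')$ and $(G,\p-\p')$ for certain non-trivial flexes). Once the dichotomy via the $(-1)$-eigenspace is set up, the Cayley transform handles one case and the affine-spanning hypothesis handles the other, completing the proof.
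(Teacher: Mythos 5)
Your proof is correct, but the second half takes a genuinely different route from the paper. The paper, after the same polarization identity on edges, disposes of non-congruence in one line: since $\p'$ is non-trivial and $\p$ affinely spans $\mathbb{R}^d$, there is some pair of vertices $i,j$ (not necessarily an edge) with $(\p_i-\p_j)\cdot(\p'_i-\p'_j)\neq 0$, and then the very same identity shows $\|(\p_i+\p'_i)-(\p_j+\p'_j)\|\neq\|(\p_i-\p'_i)-(\p_j-\p'_j)\|$, so not all pairwise distances agree and the frameworks cannot be congruent. This leans on the standard fact that, at an affinely spanning configuration, a flex of the complete graph is trivial. You instead take a putative congruence as an ambient isometry $x\mapsto Ax+b$, solve $(I+A)\p'_i=(A-I)\p_i+b$, and split on whether $-1$ is an eigenvalue of $A$: the Cayley transform yields $\p'_i=S\p_i+c$ with $S$ skew-symmetric (hence a trivial flex) in one case, and the affine-span hypothesis kills the $(-1)$-eigenspace case, exactly where you correctly locate its role. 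Your argument is sound (the commutation of $A-I$ with $(I+A)^{-1}$ and the $A$-invariance of $\ker(I+A)$ and its complement all check out), and it is more self-contained in that it does not quote the complete-graph characterization of trivial flexes; the trade-off is that it invokes the extension of a label-preserving distance equality to a global isometry of $\mathbb{R}^d$ and the skewness of the Cayley transform, whereas the paper's route reuses its displayed computation and is considerably shorter. Your closing remark that the proposition fails without the spanning hypothesis (via a reflection across the affine hull) is a nice observation the paper does not make.
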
 
\begin{proof} 
Suppose $ij\in E(G)$. Then 
$(p_i-p_j)\cdot (p'_i-p'_j)=0$, and so 
\begin{equation}
  \begin{aligned}
    \|(p_i+p'_i)-(p_j+p'_j)\|^2=
    \|(p_i-p_j)+(p'_i-p'_j)\|^2=\\
    (p_i-p_j)^2 +(p'_i-p'_j)^2 + 2(p_i-p_j)\cdot (p'_i-p'_j)=\nonumber\\  
    (p_i-p_j)^2 +(p'_i-p'_j)^2 - 2(p_i-p_j)\cdot (p'_i-p'_j)=\nonumber\\  
      \|(p_i-p_j)-(p'_i-p'_j)\|^2=\|(p_i-p'_i)-(p_j-p'_j)\|^2,
  \end{aligned}
\end{equation}
Hence $(G,\p+\p')$ and $(G,\p-\p')$ have corresponding bar lengths the same.  When $\p'$ is non-trivial, and the affine span of $\p$ is all of $\mathbb{R}^d$, then for some pair of vertices  $i,j\in V(G)$, $(p_i-p_j)\cdot (p'_i-p'_j)\ne 0$, and the above calculation shows that $(G,\p+\p')$ and $(G,\p-\p')$ are not congruent, since the middle equality, above, is violated.    
\end{proof}
  
Proposition \ref{push-pull} is illustrated in  Figure \ref{fig:Jackson}.
Notice that, due to the symmetry involved, the two non-congruent convex braced polygons on the top are mirror images of each other, but that congruence does not preserve the vertex labeling.  The mirror symmetry can be broken with different examples.

\begin{figure}[H]
\centering
\includegraphics[scale=0.45]{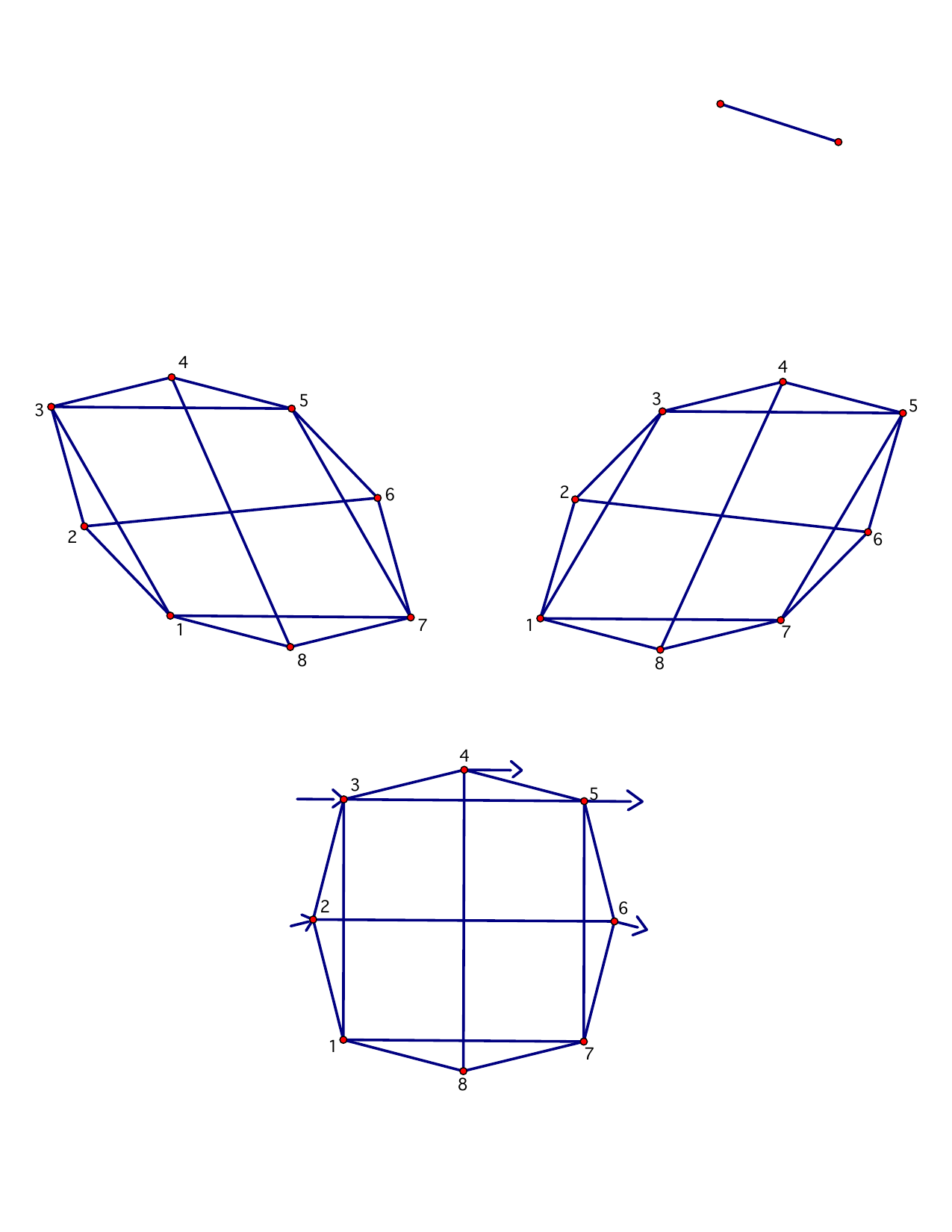}
\captionsetup{labelsep=colon,margin=1.3cm}
\caption{The two convex braced polygons on the top have corresponding bar lengths the same, but are not congruent. The bottom convex braced polygon has an infinitesimal flex $\p'$, where each non-zero $p'_i$ is indicated by a small arrow. This gives rise to the two convex braced polygons  on the top.} \label{fig:Jackson}
\end{figure}

The converse direction of Proposition~\ref{push-pull} also holds
and it is stated as follows.
The proof follows by reversing that of Proposition~\ref{push-pull}.
\begin{proposition}\label{reverse}
Let $(G,\bp)$ and $(G,\bq)$ be two non-congruent frameworks having corresponding bar lengths the same.
Then $\bp-\bq$ is a non-trivial infinitesimal motion of 
$(G,\frac{\bp+\bq}{2})$.
\end{proposition}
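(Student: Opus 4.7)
The plan is to set $\p := \frac{\bp+\bq}{2}$ and $\p' := \bp-\bq$, so that $\bp_i = \p_i + \frac{1}{2}\p'_i$ and $\bq_i = \p_i - \frac{1}{2}\p'_i$, and then run the algebra in the proof of Proposition~\ref{push-pull} in reverse: the first identity there will yield the infinitesimal flex condition on edges, while its extension to arbitrary pairs of vertices will yield non-triviality.

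For the flex condition, I would expand
\[
\|\bp_i-\bp_j\|^2 = \|\p_i-\p_j\|^2 + (\p_i-\p_j)\cdot(\p'_i-\p'_j) + \tfrac{1}{4}\|\p'_i-\p'_j\|^2,
\]
write the analogous expression for $\|\bq_i-\bq_j\|^2$ (with the middle term negated), and subtract to obtain
\[
\|\bp_i-\bp_j\|^2 - \|\bq_i-\bq_j\|^2 = 2(\p_i-\p_j)\cdot(\p'_i-\p'_j).
\]
For every edge $ij\in E(G)$ the left-hand side vanishes by hypothesis, so $(\p_i-\p_j)\cdot(\p'_i-\p'_j)=0$, which is precisely the condition that $\p'$ is an infinitesimal flex of $(G,\p)$.

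For non-triviality, the identity above is valid for arbitrary pairs $i,j$, not only for edges. Since $(G,\bp)$ and $(G,\bq)$ are not congruent, there must exist some pair $i,j$ with $\|\bp_i-\bp_j\|\neq\|\bq_i-\bq_j\|$, whence $(\p_i-\p_j)\cdot(\p'_i-\p'_j)\neq 0$. On the other hand, any trivial infinitesimal flex of $(G,\p)$ has the form $p'_k = A\p_k+b$ for a skew-symmetric matrix $A$ and a vector $b$, so $\p'_i-\p'_j = A(\p_i-\p_j)$, and the product $(\p_i-\p_j)\cdot A(\p_i-\p_j)$ vanishes identically by skew-symmetry. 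The contradiction forces $\p'$ to be non-trivial. The main conceptual step beyond a literal reversal of the algebra of Proposition~\ref{push-pull} is in this last paragraph: one must apply the quadratic identity to a pair of vertices witnessing non-congruence (which need not be an edge), and combine this with the skew-symmetric form of a trivial flex to reach the contradiction.
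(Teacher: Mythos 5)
Your proposal is correct and is essentially the paper's argument: the paper proves this proposition simply by ``reversing'' the algebra of Proposition~\ref{push-pull}, which is exactly what your expansion of $\|\bp_i-\bp_j\|^2-\|\bq_i-\bq_j\|^2=2(\p_i-\p_j)\cdot(\p'_i-\p'_j)$ does. Your final step, that a trivial flex $p'_k=A\p_k+b$ with $A$ skew-symmetric annihilates $(\p_i-\p_j)\cdot(\p'_i-\p'_j)$ for \emph{all} pairs (equivalently, trivial flexes preserve all pairwise distances to first order), is the standard fact the paper leaves implicit, and correctly avoids any need for the affine-span hypothesis appearing in Proposition~\ref{push-pull}.
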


Now let us move to the proof of Theorem~\ref{convex}.
Proposition~\ref{push-pull} implies that,
if some braced strictly convex polygon $(G,\bp)$ has a non-trivial infinitesimal flex $\bp'$,
then $(G, \bp+\varepsilon \bp')$ is not convexly rigid for a sufficiently small $\varepsilon>0$. This gives one direction of Theorem~\ref{convex}.
We will verify the converse direction by using Proposition~\ref{reverse},
but this step requires an additional idea  because $(G,\frac{\bp+\bq}{2})$ may not be a strictly convex braced polygon even if $(G,\bp)$ and $(G,\bq)$ are.
See Figure~\ref{fig:parallel} for an example.
Our goal is to show the existance of  an appropriate rotation of $(G,\bq)$ such that  $(G,\frac{\bp+\bq}{2})$ is a strictly convex braced  polygon after rotation.

Suppose that $(G,\bp)$ and $(G,\bq)$ are strictly convex  braced polygons having corresponding bar lengths the same,
and denote $\bp=(p_1,\dots,p_n)$ and $\bq=(q_1,\dots, q_n)$.
We prepare a reference ray to be the horizontal ray to the left direction,
and for each $i=1,\dots,n$ let $\theta_i$ (resp.~$\phi_i$) be the counterclockwise angle from the reference ray to $p_{i+1}-p_i$ (resp.~$q_{i+1}-q_i$). 
By rotating $(G,\bp)$ (resp.~$(G,\bq)$), we may assume that $\theta_1=\phi_1=0$. See Figure~\ref{fig:angles}.
We can further extend the definition of $\theta_i$ (and similarly $\psi_i$) over any $i\in \mathbb{Z}$ by choosing $r,t\in \mathbb{Z}$ such that $i=tn+r$ and $1\leq r\leq n$, and then setting $\theta_i:=\theta_r+2\pi t$.

\begin{figure}[t]
\centering
\includegraphics[scale=0.9]{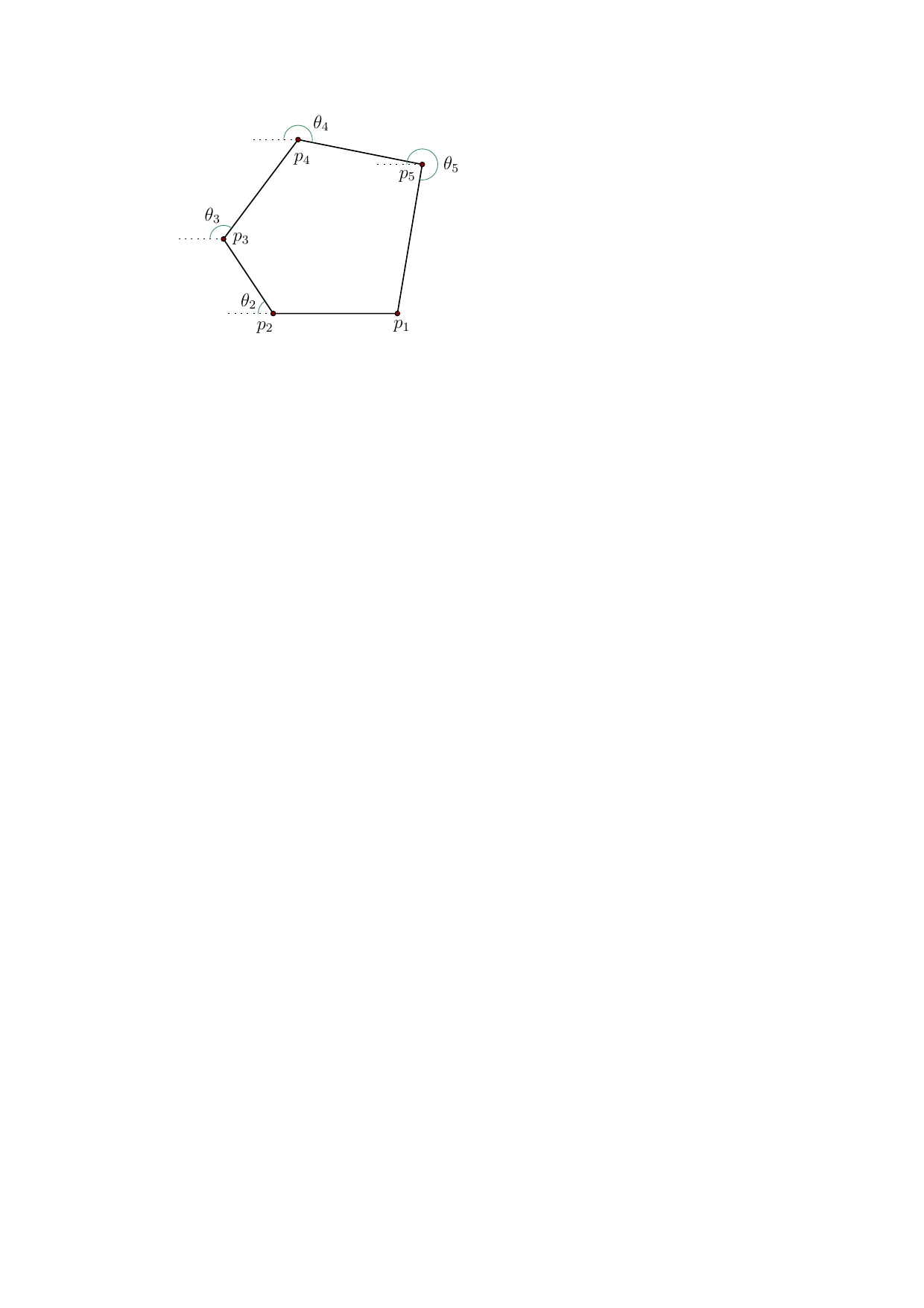}
\caption{Angles in a strictly convex polygon.}
\label{fig:angles}
\end{figure}

By strict convexity, we have 
$\theta_i<\theta_{i+1}<\theta_i+\pi$ for each $i$,
and this inequality is indeed a necessary and sufficient condition for $(G,\bp)$ to be strictly convex.

Consider $(G,\frac{\bp+\bq}{2})$
and define  the corresponding angles $\psi_i \ (i\in \mathbb{Z})$ for $(G,\frac{\bp+\bq}{2})$ by the same definition.
Then $\psi_i=\frac{\theta_i+\phi_i}{2}$ if 
$|\theta_i-\phi_i|$ is less than $\pi$.
(If $|\theta_i-\phi_i|$ is more than $\pi$,
then the direction of $\frac{p_{i+1}+q_{i+1}}{2}-\frac{p_i+q_i}{2}$ becomes opposite to the angle we are measuring between $p_{i+1}-p_i$ and $q_{i+1}-q_i$.
So $\psi_i$ becomes $\frac{\theta_i+\phi_i}{2}-\pi$. See Figure~\ref{fig:angle-wrong}.)
This implies that, if we can guarantee 
\begin{equation}
\label{eq:condition_for_covexity}
|\theta_i-\phi_i|<\pi \quad \text{for all $i$},
\end{equation}
then, for all $i$, 
$\psi_i=\frac{\theta_i+\phi_i}{2}$ holds 
and $\psi_i<\psi_{i+1}<\psi_i+\pi$ follows 
from $\theta_i<\theta_{i+1}<\theta_i+\pi$
and $\phi_i<\phi_{i+1}<\phi_i+\pi$.
Thus, $(G,\frac{\bp+\bq}{2})$ will be strictly convex.

Our next lemma shows that we can always achieve (\ref{eq:condition_for_covexity}) by rotating $(G,\bq)$.
We say that a function $f:\mathbb{R}\rightarrow \mathbb{R}$ is a {\em monotone step function} if it is a piecewise constant, upper-semi-continuous function, and non-decreasing.
\begin{lem}\label{lem:adjust}
Let $f,g:\mathbb{R} \rightarrow \mathbb{R}$ be monotone step functions.
Suppose that $(f,g)$ is periodic in the sense that, for some positive numbers $l$ and $C$, 
$f(x+l)=f(x)+C$ and  $g(x+l)=g(x)+C$ for all $x\in \mathbb{R}$.  
Suppose further that $f(x)\in [0,C)$ and 
$g(x)\in [0,C)$ for all $x\in [0,l)$.
Then there is a 
constant $\alpha$ such that $|f(x)-g(x)-\alpha| < \frac{C}{2}$ for all $x\in \mathbb{R}$. 
\end{lem}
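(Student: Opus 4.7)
The plan is to introduce $h(x) := f(x) - g(x)$, observe that $h$ is $l$-periodic as an immediate consequence of the hypothesis, and show that its oscillation over one period is strictly less than $C$; the desired constant $\alpha$ is then the midpoint of the supremum and infimum of $h$.

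The core step is bounding the oscillation. For any $x_1 \le x_2$ in $[0, l)$ I expand
\[
h(x_2) - h(x_1) = \bigl(f(x_2) - f(x_1)\bigr) - \bigl(g(x_2) - g(x_1)\bigr).
\]
Using the monotonicity of $f$ together with the hypothesis $f(x) \in [0, C)$ on $[0, l)$, I can bound $f(x_2) - f(x_1) \in [0, C)$, and symmetrically $g(x_2) - g(x_1) \in [0, C)$. Combining these, the difference $h(x_2) - h(x_1)$ lies in the open interval $(-C, C)$.

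Since $f$ and $g$ are piecewise constant on the bounded interval $[0, l)$, they have only finitely many jumps there, and therefore so does $h$; hence $h$ takes only finitely many values on $[0, l)$ and attains its supremum $M$ and infimum $m$ at some points of $[0, l)$. Applying the oscillation bound to these two points gives $M - m < C$. Setting $\alpha := (M + m)/2$, I then obtain $|h(x) - \alpha| \le (M - m)/2 < C/2$ for every $x \in [0, l)$, and the bound extends to all of $\mathbb{R}$ by the $l$-periodicity of $h$.

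The one subtle point I expect is ensuring that the inequality $M - m < C$ is strict rather than weak. This is exactly what is provided by the open-interval hypothesis $f(x), g(x) \in [0, C)$ (excluding the right endpoint) combined with the piecewise-constant assumption, which guarantees attainment of the extrema on $[0, l)$. Without the strict upper bound, one could only conclude $M - m \le C$, and consequently the strict conclusion of the lemma could fail (for instance, two shifted copies of the same monotone step function with a single unit jump can realise $M - m = C$ if the endpoint values are allowed).
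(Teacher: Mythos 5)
Your proof is correct and follows the same overall strategy as the paper: set $h = f - g$, note that it is $l$-periodic and attains a maximum $M$ and minimum $m$, show $M - m < C$, and take $\alpha$ to be the midpoint. The one place you diverge is in how the strict oscillation bound is obtained. You place \emph{both} extremum points in the fundamental interval $[0,l)$ and then directly bound $f(x_2)-f(x_1) \in [0,C)$ and $g(x_2)-g(x_1) \in [0,C)$ from the hypothesis, so $h(x_2)-h(x_1) \in (-C,C)$ immediately. The paper instead only normalizes the minimizer into $[0,l)$ and allows the maximizer to lie in $[m, m+l)$ (possibly $\ge l$); this yields the weaker bounds $f(M)-f(m) \le C$, $g(M)-g(m) \le C$, and then requires a short case analysis to rule out $f(M)-f(m)=C$ and $g(M)=g(m)$ holding simultaneously. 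Your normalization avoids that case analysis and is slightly cleaner, but the underlying idea is the same.
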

\begin{proof} 
By the assumption on $f$ and $g$, 
$f-g$ is a periodic piecewise constant function with period $l$.
Then $f-g$ takes only finitely many values so  has a minimizer $m$ and a maximizer $M$.
By the periodicity of $f-g$, we may assume $M\in [m, m+l)$.
Then, 
\begin{equation}\label{eq:period}
0\leq f(M)-f(m)\leq C \mbox{ and } 0\leq g(M)-g(m)\leq C.
\end{equation}
Since $g(m)<C$ and $g(l)\geq C$,
if $g(m)=g(M)$, then $M<l$ holds and hence $f(M)<C$ holds.
In particular, by $f(m)\geq 0$, $f(M)-f(m)=C$ and $g(M)=g(m)$ cannot occur simultaneously.
Therefore, we have
\begin{align*}
0&\leq f(x)-g(x)-(f(m)-g(m)) \\
&\leq f(M)-g(M)-(f(m)-g(m)) \\
&=f(M)-f(m)-(g(M)-g(m)) \\
&< C-0=C,
\end{align*}
where the first inequality follows from the definition of $m$,
the second inequality follows from the definition of $M$, 
and the fourth strict inequality follows from 
(\ref{eq:period}) and the fact that $f(M)-f(m)=C$
and $g(M)=g(m)$ cannot occur simultaneously.
Let $F(x)=f(x)-f(m)$ and $G(x)=g(x)-g(m)$. The above inequality gives
\begin{equation}\label{eq:period2}
0\leq F(x)-G(x)\leq F(M)-G(M)< C
\end{equation}
for any $x\in\mathbb{R}$.
We set
\[
\alpha=f(m)-g(m)+\frac{F(M)-G(M)}{2}.
\]
Then, for any $x$,
$f(x)-g(x)-\alpha=F(x)-G(x)-\frac{F(M)-G(M)}{2}$.
Hence, by (\ref{eq:period2}),
$|f(x)-g(x)-\alpha|\leq |\frac{F(M)-G(M)}{2}|<\frac{C}{2}$.
\end{proof}



\begin{figure}[t]
\centering
\includegraphics[scale=0.7]{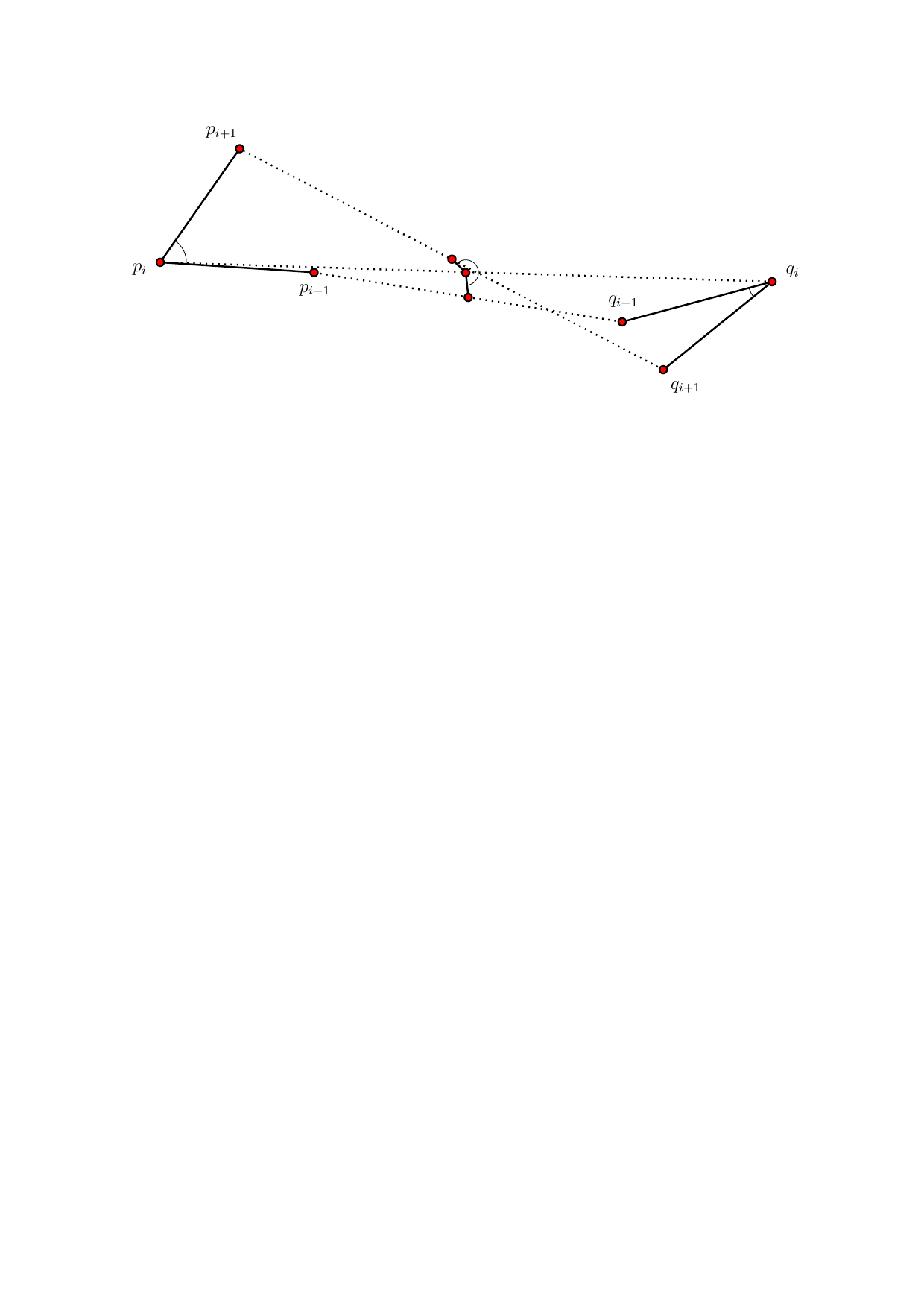}
\captionsetup{labelsep=colon,margin=1.3cm}
\caption{
This shows two convex angles such that their average is not convex.
} \label{fig:angle-wrong}
\end{figure}

\medskip

We are now ready to prove Theorem~\ref{convex}.

\medskip

 {\bf Proof of Theorem \ref{convex}:} We have already seen that, if $G$ has a strictly convex realisation with a non-trivial infinitesimal flex, then $G$ has two non-congruent strictly convex realisations in which corresponding bars have the same edge lengths (by Proposition~\ref{push-pull}).  
 
 To see the converse direction,  we assume that there are two non-congruent strictly convex braced polygons $(G,\p)$ and $(G,\q)$ with corresponding bars of the same length, both oriented counter-clockwise. 
 For $x\in \mathbb{R}$, let $p_x$ be the point on the boundary polygon of $(G,\bp)$
 whose distance from $p_1$ along the polygon in the counter-clockwise direction is equal to $x$.
 Consider $\theta_i$ as defined before Lemma~\ref{lem:adjust} based on $\bp$,
 and define $f:\mathbb{R}\rightarrow \mathbb{R}$
 by $f(x)=\theta_i$ for $x\in \mathbb{R}$ with $p_x\in [p_i,p_{i+1})$.
 Similarly, $g:\mathbb{R}\rightarrow \mathbb{R}$ is defined based on $\bq$.
 Since $(G,\bp)$ and $(G,\bq)$ have corresponding bar lengths the same, 
 $f$ and $g$ are monotone step functions 
 and they are periodic with $f(x+l)=f(x)+2\pi$ and $g(x+l)=g(x)+2\pi$ for any $x\in \mathbb{R}$, where $l$ is the circumference of the boundary polygons of $(G,\bp)$ and $(G,\bq)$.
 By Lemma~\ref{lem:adjust}, there is an $\alpha$ such that 
 $|f(x)-g(x)-\alpha|<\pi$.
 This in turn implies that, if $(G,\bq')$ is obtained by  rotating $(G,\bq)$ by $\alpha$,
 (\ref{eq:condition_for_covexity}) holds between $(G,\bp)$ and $(G,\bq')$,
 and $(G,\frac{\bp+\bq'}{2})$ forms a braced strictly convex polygon.
 By Proposition~\ref{reverse}, $(G,\frac{\bp+\bq'}{2})$ is not infinitesimally rigid.
 This completes the proof.
 \qed

 \begin{rem}\label{rem:parallel}Even when one strictly convex braced polygon $(G,\p)$ has one of its edges parallel to the corresponding edge of $(G,\q)$, the average $(G,(\p+\q)/2)$ may not be convex as seen in Figure \ref{fig:parallel}.
 \end{rem}
\begin{figure}[H]
\centering
\includegraphics[scale=0.4]{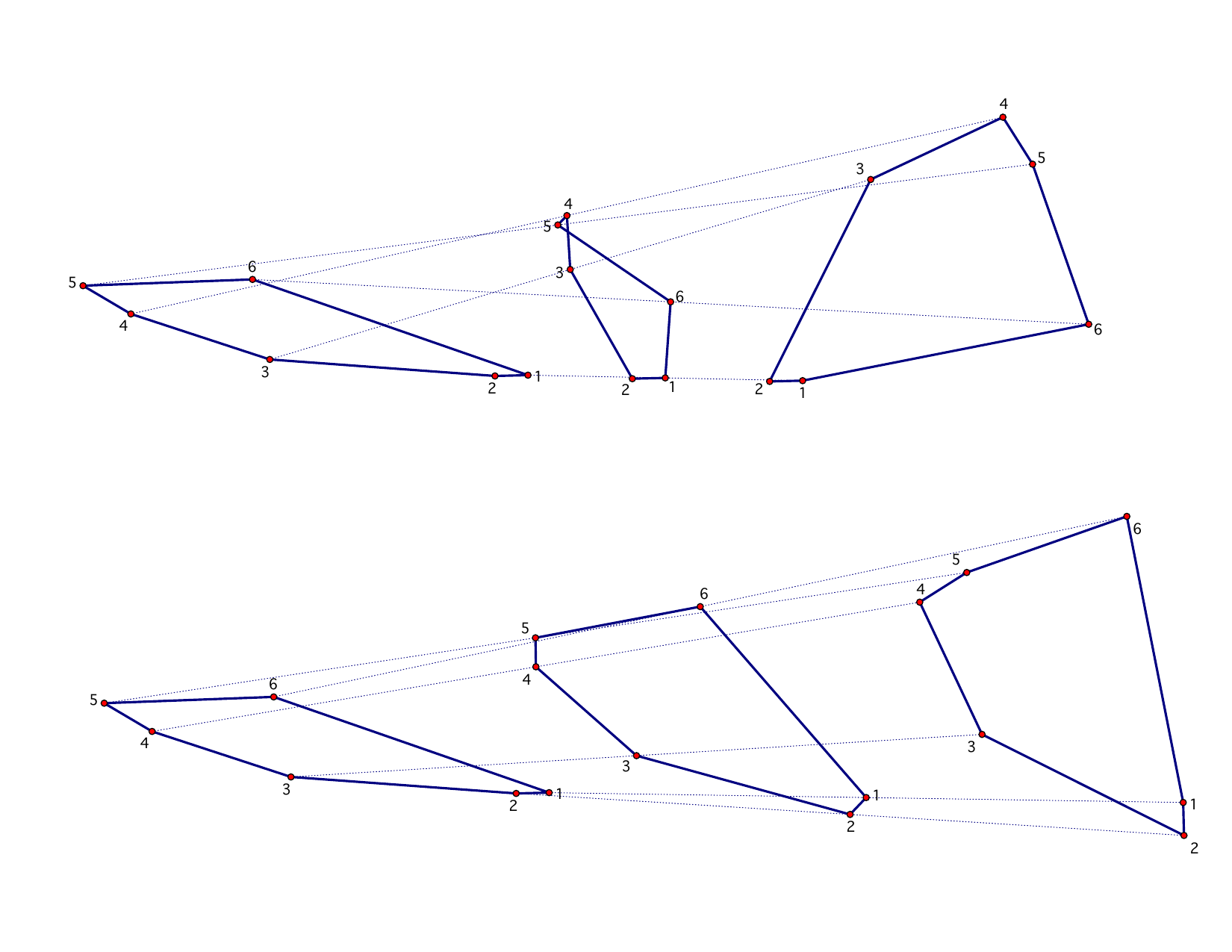}
\captionsetup{labelsep=colon,margin=1.3cm}
\caption{ The top figure shows two strictly convex braced polygons with corresponding sides the same length, and with one pair $\{1,2\}$ of corresponding sides that are parallel and have the same direction, but their average is not a convex polygon.  The bottom figure shows the same two strictly convex braced polygons but with one rotated so that their average is convex.  The dotted line segment between corresponding vertices have the average vertices at their midpoint. } \label{fig:parallel}
\end{figure}
 \begin{rem}\label{rem:unequal}Figure \ref{fig:unequal} shows two frameworks consisting of two bars $b_1,b_2$ and $c_1,c_2$. Bars $b_1,c_1$ have equal length but the other pair $b_2, c_2$ do not have equal length. Both $b_1,b_2$ and $c_1,c_2$ have the same orientation.  However, the average pair $(b_1+c_1)/2,(b_2+c_2)/2$ has the opposite orientation.  So the sign of the angle between  successive bars can change when corresponding bars do not have the same length.
\end{rem}
\begin{figure}[H]
\centering
\includegraphics[scale=0.3]{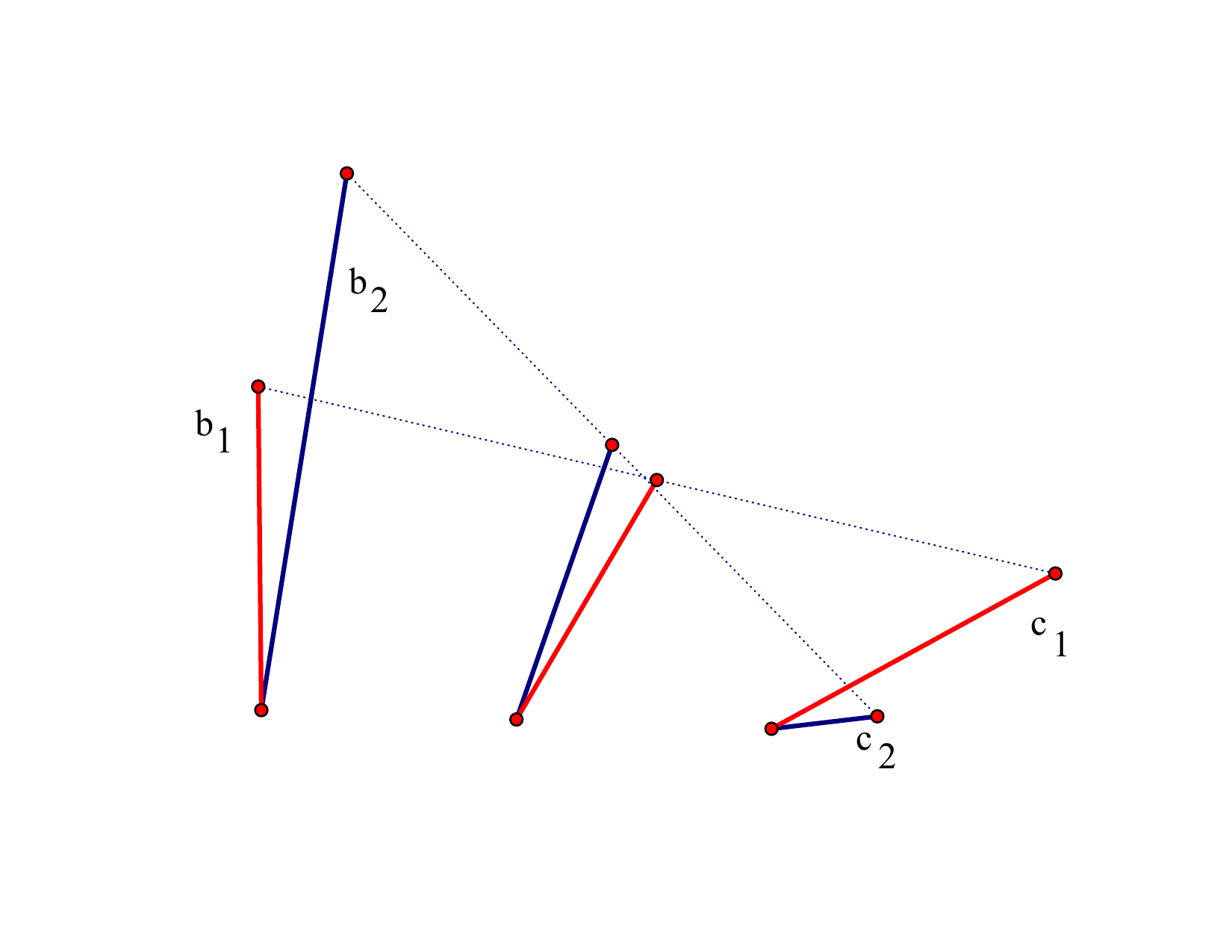}
\captionsetup{labelsep=colon,margin=1.3cm}
\caption{Two frameworks, where not all corresponding pairs of bars have equal length, and the average configuration has the opposite orientation} \label{fig:unequal}
\end{figure}
\begin{rem}\label{rem:non-global}Figure \ref{fig:non-global} shows two non-congruent frameworks that have corresponding bars of the same length.  Hence neither framework is globally rigid. The one on the right is a strictly convex braced polygon that is infinitesimally rigid in all strictly convex realizations and so is convexly rigid by Theorem \ref{convex}. Thus the left framework is necessarily not strictly convex. The underlying graph of these frameworks is globally rigid in the plane at all generic configurations.
\end{rem}
\begin{figure}[H]
\centering
\includegraphics[scale=0.4]{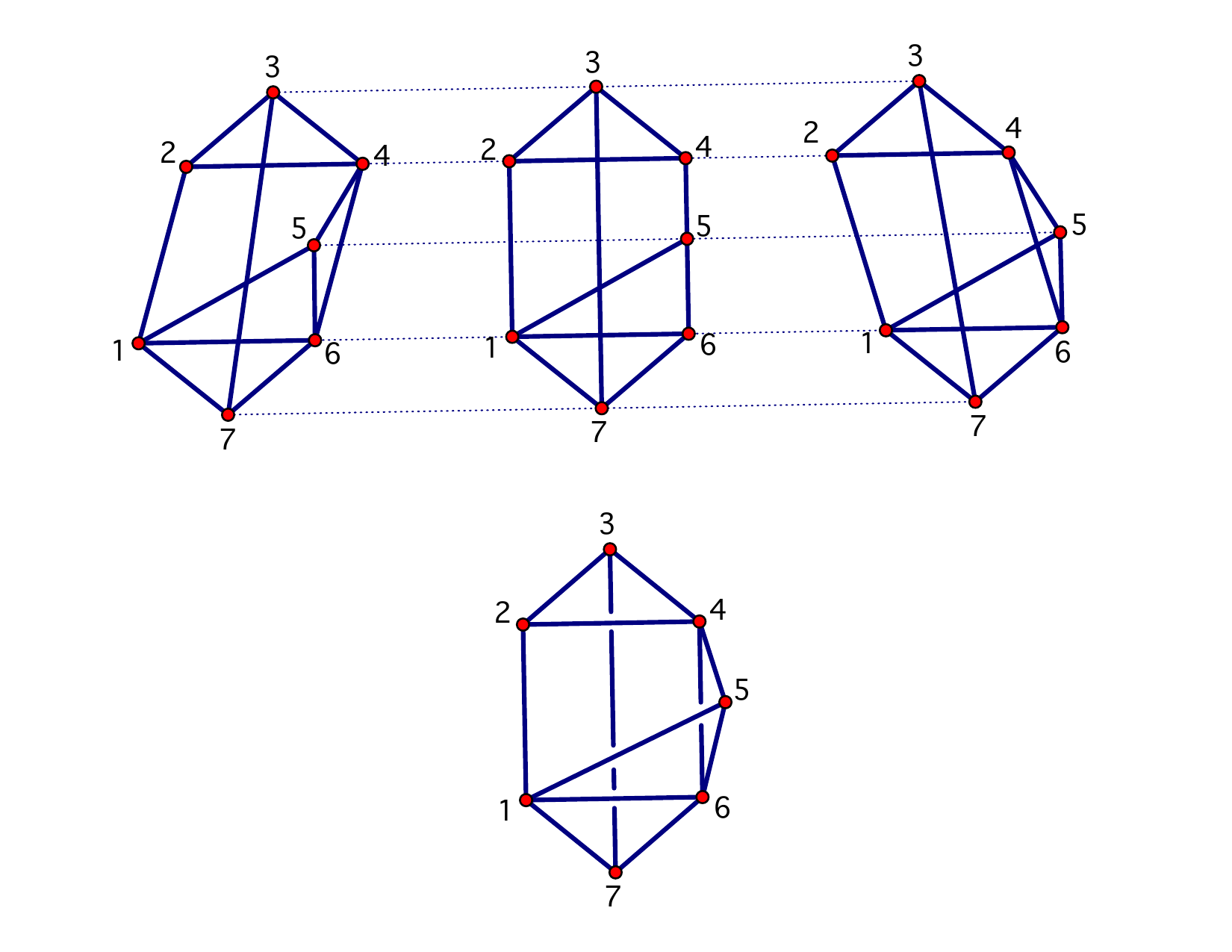}
\captionsetup{labelsep=colon,margin=1.3cm}
\caption{This shows two non-congruent bar frameworks for the same graph that have corresponding bars the same length.  The one on the right is a braced strictly convex polygon. The one in the middle is an average of the two other non-congruent configurations.  Note that the middle framework is convex, but not strictly convex, and is necessarily not infinitesimally rigid in the plane. The points $1,7,6,5$ are fixed in all three realizations. } \label{fig:non-global}
\end{figure}


\section{Braced Polygons with Proper Stresses}\label{Sect:Braced}
In this section, we shall strengthen Theorem~\ref{convex} by restricting our attention to special families of braced polygons.
Our discussion is motivated by a result by Geleji and Jordan~\\\cite{robust}, which characterizes the global rigidity of all strictly convex plane realizations of a braced polygon graph which is a circuit in the generic 2-dimensional rigidity matroid.
We shall develop a novel analysis of the rigidity of braced polygon graphs by focusing on minimal 3-connectivity.

\subsection{Proper Stresses and Super Stability}\label{subsec:3-1}
Our tool to prove global rigidity (or even universal rigidity) is
super stability, which is based on the concept of an equilibrium stress.

\begin{defn}
For a configuration $(G,\p)$, a \textit{stress} is a function $\omega:E\to \mathbb{R}$ that
maps the set of edges of $G$ to the real numbers. The stress is in \textit{equilibrium} if $\sum_{j} \omega_{ij}(\mathbf{p}_i-\mathbf{p}_j)=\mathbf{0}$ for every vertex $i$, taking $\omega_{ij}=0$ when $ij\not\in E$.   The \textit{stress matrix} $\Omega$ of a stress $\omega$ is an $n\times n$ symmetric matrix defined as $\Omega_{ij}=-\omega_{ji}$ for each edge $e_{ij}$, $\Omega_{ij}=0$ if $i\not=j$ and there is no edge between $v_i$ and $v_j$, and that each row sums to $0$. 

We say that an equilibrium stress  for a convex braced polygon is {\em proper} if it  is positive on the boundary polygon and negative on the interior edges, and that the convex braced polygon is {\em properly stressed} if it has a proper stress.
\end{defn}

\begin{defn}
    A framework $(G,\p)$ of dimension $d$ with stress matrix $\Omega$ is \textit{super stable} if $\Omega$ is positive semidefinite with rank $n-d-1$, and all affine transformations that preserve the length of all edges are trivial motions of the framework. 
\end{defn}

A basic theorem in \cite[Theorem 5.14.1]{book} states that super stable frameworks are universally rigid. The reverse is not true. See Figure \ref{fig:visual} in the plane, and see \cite{iterative} for the general situation.

For braced polygons, the following sufficient condition for super stability is useful.

\begin{thm}[Connelly~\cite{polygon}]\label{thm:connelly}
Suppose a strictly convex braced polygon $(G,\bp)$ has  a proper stress.
Then it is super stable for the corresponding stress matrix.
\end{thm}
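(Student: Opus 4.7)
The plan is to verify the three defining conditions of super stability for the pair $(G,\bp)$ equipped with the stress matrix $\Omega$ of the given proper stress $\omega$: (a) $\Omega \succeq 0$; (b) $\operatorname{rank}\Omega = n - 3$; and (c) every affine map of $\mathbb{R}^2$ that preserves the squared length of every edge of $G$ is an isometry of $\bp$. Items (a) and (b) carry the analytic content, while (c) is a direction-of-edges condition that will follow from strict convexity together with the fact that a proper stress forces the graph to contain at least one brace.

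For (a) and (b), I would view $(G,\bp)$ as a tensegrity in which the Hamilton boundary cycle is made up of cables (edges on which $\omega$ is positive) and each interior edge is a strut (edges on which $\omega$ is negative). Writing $\mathbf{z}^{\top}\Omega\mathbf{z}=\sum_{ij\in E(G)}\omega_{ij}(z_i-z_j)^2$ for $\mathbf{z}\in\mathbb{R}^n$, I must show this form is nonnegative and vanishes exactly on a three-dimensional subspace. The key idea, following Connelly in \cite{polygon}, is that the equilibrium relation at each vertex together with strict convexity lets one redistribute the negative contribution of each interior brace across the positive contributions of a boundary arc spanning that brace; after this redistribution the form is exhibited as a manifestly nonnegative combination of squares, giving $\Omega\succeq 0$. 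Equality forces $\mathbf{z}$ to be the restriction to $\bp$ of an affine function on $\mathbb{R}^2$, so, since $\bp$ affinely spans the plane, the kernel of $\Omega$ is exactly three-dimensional and $\operatorname{rank}\Omega = n-3$.

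For (c), a nontrivial affine motion preserving every edge squared length corresponds to a nonzero symmetric $2\times 2$ matrix $S$ with $(p_i-p_j)^{\top}S(p_i-p_j)=0$ for every $ij\in E(G)$. The zero set in $\mathbb{R}^2$ of such a form is either $\{0\}$, a single line through the origin, or two lines through the origin, hence contains at most two distinct directions modulo $\pi$. A proper stress is strictly positive on the strictly convex Hamilton boundary, and a direct vertex-equilibrium computation shows that no nonzero stress can be supported on the boundary alone of a strictly convex polygon; thus $G$ must contain at least one brace. Combining any brace with the boundary produces three pairwise non-parallel edge directions by strict convexity, which rules out any admissible nonzero $S$.

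The main obstacle is step (a), the positive-semidefiniteness of $\Omega$. Unlike the classical case in which the stress has a single sign and the quadratic form is a manifest sum of positive multiples of squares, the mixed sign pattern of a proper stress means that the cancellation between the positive boundary contributions and the negative brace contributions is favorable only because the underlying polygon is strictly convex. Making the redistribution argument rigorous so that it simultaneously yields nonnegativity and the sharp corank-three bound is the technical heart of the proof, which is exactly what is carried out in \cite{polygon}.
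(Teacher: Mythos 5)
The paper does not prove this statement at all: Theorem~\ref{thm:connelly} is imported verbatim from Connelly's earlier work \cite{polygon} and is used as a black box, so there is no in-paper argument to compare yours against. Judged as a standalone proof, your proposal correctly identifies what must be verified (positive semidefiniteness of $\Omega$, corank exactly $3$, and the absence of non-trivial edge-length-preserving affine maps), and your treatment of the affine condition is essentially complete: a non-trivial affine map gives a nonzero symmetric $S$ with $(p_i-p_j)^{\top}S(p_i-p_j)=0$ on all edges, the zero set of such a form contains at most two directions modulo $\pi$, Proposition~\ref{prop:convex_stress}-type reasoning at a boundary vertex shows a nonzero equilibrium stress cannot live on the boundary cycle alone so a brace exists, and the boundary together with a brace supplies three distinct directions even in the parallelogram case.

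The genuine gap is exactly where you locate it: conditions (a) and (b). Saying that equilibrium plus strict convexity ``lets one redistribute the negative contribution of each interior brace across the positive contributions of a boundary arc spanning that brace'' is not an argument --- no redistribution scheme is specified, no inequality is derived from convexity, and it is not explained why the residual form is a sum of squares rather than merely having nonnegative trace or some weaker property. Likewise, the assertion that equality forces $\mathbf{z}$ to be affine on $\bp$ is precisely the corank-$3$ statement and is not derived from anything you wrote; note that the kernel of $\Omega$ automatically contains the constant vector and the two coordinate functions of $\bp$, so the entire content of (b) is that nothing else lies in the kernel, and your sketch gives no mechanism for ruling that out. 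Since you then state that making this rigorous ``is exactly what is carried out in \cite{polygon}'', the proposal reduces the theorem to the very reference it is supposed to prove. To close the gap you would need to actually carry out Connelly's argument (for instance, an inductive superposition of super stable sub-polygons in the spirit of Lemma~\ref{lem:super}, or the energy/lifting argument of \cite{polygon}), not merely name it.
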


The following easy observation that follows from convexity is  another key tool.

\begin{proposition}\label{prop:convex_stress}
Let $(G,\bp)$ be a strictly convex braced polygon 
and suppose that $(G,\bp)$ has a non-zero equilibrium stress $\omega$ that is non-positive  on all the interior edges.
Then $\omega$ is positive on the boundary edges.
\end{proposition}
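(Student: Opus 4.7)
The plan is to analyse the equilibrium equation at each boundary vertex using the cone geometry of the strictly convex polygon, and then propagate the resulting dichotomy around the Hamilton cycle.

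First I would fix a vertex $p_i$ and separate boundary and interior terms in the equilibrium condition at $i$:
\[
\omega_{i-1,i}(p_i-p_{i-1}) + \omega_{i,i+1}(p_i-p_{i+1}) \;=\; \sum_{j \in N_{\mathrm{int}}(i)} (-\omega_{ij})\,(p_i-p_j),
\]
where $N_{\mathrm{int}}(i)$ denotes the interior (brace) neighbours of $i$. By hypothesis every coefficient $-\omega_{ij}$ on the right is non-negative.

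The key geometric input is that strict convexity of $(G,\bp)$ guarantees that $p_i-p_{i-1}$ and $p_i-p_{i+1}$ are linearly independent, and that every other vertex $p_j$ with $j\notin\{i-1,i,i+1\}$ lies strictly inside the cone at $p_i$ bounded by the two boundary rays. Equivalently, each $p_i-p_j$ has a unique expansion $a_j(p_i-p_{i-1})+b_j(p_i-p_{i+1})$ with $a_j,b_j>0$. Substituting and reading off coordinates in the basis $\{p_i-p_{i-1},\,p_i-p_{i+1}\}$ gives
\[
\omega_{i-1,i} \;=\; \sum_{j\in N_{\mathrm{int}}(i)} (-\omega_{ij})\,a_j, \qquad \omega_{i,i+1} \;=\; \sum_{j\in N_{\mathrm{int}}(i)} (-\omega_{ij})\,b_j,
\]
both of which are non-negative, and equal to zero if and only if every interior stress at $i$ vanishes. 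So at every vertex $i$ exactly one of the following holds: (a) $\omega_{i-1,i}=\omega_{i,i+1}=0$ and every interior stress at $i$ vanishes, or (b) $\omega_{i-1,i}>0$ and $\omega_{i,i+1}>0$.

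To finish, I would propagate this dichotomy around the Hamilton cycle. Alternative (b) already rules out a strictly negative boundary stress, so if some boundary edge failed to be positive it would have $\omega_{i-1,i}=0$, forcing alternative (a) at vertex $i$ and hence killing $\omega_{i,i+1}$ together with every interior edge at $i$. Applying alternative (a) in turn to vertices $i+1, i+2, \ldots$ around the cycle would force $\omega\equiv 0$ on $G$, contradicting the non-zero hypothesis. The only substantive step in the argument is the convex-cone observation that pins down the signs of $\omega_{i-1,i}$ and $\omega_{i,i+1}$ from the basis expansion; the remainder is routine propagation.
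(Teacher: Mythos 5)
Your proof is correct and follows essentially the same route as the paper's: a vertex-by-vertex analysis of the equilibrium condition using the convex cone at each vertex to pin down the signs of the two boundary stresses, followed by propagating the resulting dichotomy around the boundary cycle. Your version is in fact slightly more explicit than the paper's (which phrases the cone argument in terms of three vectors summing to zero and leaves the propagation implicit), but the underlying idea is identical.
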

\begin{proof}
At each vertex, consider the three vectors, centered at the vertex; the sum of the interior edges weighted by their non-positive stresses, and the two boundary edges weighted by their stresses.  The equilibrium condition implies that the sum of these three vectors is $0$.  If just one of these vectors is $0$, it implies that the other two are negatives of each other, and this contradicts the strict convexity at that vertex.  Similarly, if two of the vectors are $0$, so is the third and they are all $0$.  Then the local strict convexity at the vertex implies that when all three are non-zero, the boundary vertex stresses are both positive by the independence of the three vectors.  

So either all the stresses are $0$, or all the boundary stresses are positive, and there is at least one interior stress that is negative at each vertex. 
\end{proof}


\subsection{Minimally 3-connected Braced Polygons with Proper Stresses}\label{subsec:3-2}
In view of Theorem~\ref{thm:connelly}, the next natural question is to identify the strictly convex braced polygons which have a proper stress. 
We give a positive answer for  the family of strictly convex realisations of minimally 3-connected braced polygon graphs, whose  formal definition is given as follows.

\begin{defn}
A braced polygon graph is \textit{minimally $k$-connected} if it is $k$-connected, and removing any brace (interior edge) would cause it to be no longer $k$-connected. 
\end{defn}

The following theorem is a main technical observation.

\begin{thm}\label{stressExist}
    Every 3-connected braced polygon graph $P$ has a strictly convex realization that is properly stressed. In addition, if $B$ is a set of braces such that $P-B$ is 3-connected, then there exists a strictly convex realization of $P$ such that $P-B$ is properly stressed and all braces in $B$ have positive stresses. 
\end{thm}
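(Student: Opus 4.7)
I would prove only the second assertion, since part~(a) is the $B=\emptyset$ case. The first reduction is Proposition~\ref{prop:convex_stress}: in a strictly convex realisation, any nonzero equilibrium stress that is $\le 0$ on every interior edge is automatically $>0$ on the Hamilton boundary. For $B=\emptyset$ it therefore suffices to find a strictly convex realisation of $P$ admitting a nonzero equilibrium stress $\omega$ with $\omega(b)\le 0$ on all braces; a small strictly convex perturbation then makes all brace values strictly negative and produces a proper stress in the sense of the definition.

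For the base case $B=\emptyset$, my approach splits along planarity. When $P$ admits a planar drawing with the Hamilton boundary as the outer face, Steinitz's theorem realises $P$ as the $1$-skeleton of a strictly convex $3$-polytope, and projecting this polytope to the plane together with the Maxwell--Cremona correspondence produces an equilibrium stress that is strictly positive on the boundary and strictly negative on every chord. For non-planar $3$-connected braced polygon graphs (e.g.\ $K_{3,3}$ realised as a regular hexagon with its three long diagonals), Maxwell--Cremona does not apply directly; instead I would use an inductive construction of $3$-connected braced polygon graphs from $K_4$ by permissible operations (such as subdividing a boundary edge or adding a brace that restores $3$-connectivity), transferring the proper stress at each step.

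For the inductive extension to general $B$, pick $b\in B$ and apply induction to $(P-b,B\setminus\{b\})$: since $P-B$ is unchanged it remains $3$-connected, so induction yields a strictly convex realisation $(P-b,\bp)$ with an equilibrium stress $\omega'$ that is positive on the boundary and on $B\setminus\{b\}$, and negative on the braces of $P-B$. Place brace $b$ at its natural length in $\bp$ and extend $\omega'$ to $P$ by setting $\omega'(b)=0$. Next I would find an auxiliary equilibrium stress $\eta$ on $(P,\bp)$ with $\eta(b)>0$; such $\eta$ exists because adding the brace $b$ introduces a new dependence in the $2$-dimensional rigidity matroid at $\bp$ (which can be arranged, if necessary, by a small strictly-convex perturbation of $\bp$). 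Then $\omega=\omega'+\varepsilon\eta$ for $\varepsilon>0$ sufficiently small retains the strict negative sign pattern on braces of $P-B$ while achieving $\omega(b)>0$ on every $b\in B$.

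The hard part of this plan is the base case in the non-planar setting. Examples like $K_{3,3}$ show that a proper stress exists only at rather special strictly convex realisations (e.g.\ a regular hexagon), so the general argument cannot rely on Maxwell--Cremona alone. I expect minimal $3$-connectivity to play the decisive role here: by reducing to a minimally $3$-connected sub-braced-polygon of $P$, one obtains tight structural control on the brace set, constructs the proper stress inductively on that substructure, and then transfers it back to $P$ by extending by zero on the remaining braces, controlling signs via Proposition~\ref{prop:convex_stress} throughout.
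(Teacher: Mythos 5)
Your outline correctly lands on the same inductive strategy as the paper — build every $3$-connected braced polygon graph from $K_4$ by a sequence of brace additions (Proposition~\ref{operation}) and carry a proper stress along — but the step you describe as ``transferring the proper stress at each step'' is precisely the heart of the theorem, and your proposal leaves it unaddressed. When one of the operations adds a brace (say by splitting one or two boundary edges and joining the new vertices), the only way to keep the old equilibrium stress alive is to place the new vertices \emph{on} the old edges; that configuration is convex but not strictly convex, and the freshly added brace has stress $0$, not $<0$. A generic ``small strictly convex perturbation'' does not fix this: perturbing the configuration changes the linear space of equilibrium stresses, and there is no a priori reason the perturbed space contains a stress with the required strict sign pattern. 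The paper resolves exactly this with the energy-minimization machinery of Proposition~\ref{energyfunction}: it designs edgewise energy functions whose derivatives at the unperturbed squared lengths equal the desired stresses, shows that super stability of the old configuration forces the minimizer of the total energy to stay near the old edge lengths (hence near the old configuration up to isometry), and reads off from the minimizer a genuinely new strictly convex configuration with a new equilibrium stress of the desired signs. Nothing in your sketch plays the role of this argument, and the same gap undermines your opening reduction (``a small strictly convex perturbation then makes all brace values strictly negative''), which again asserts a sign-tightening perturbation without a mechanism.

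Two further remarks. First, your planar/Maxwell--Cremona branch is essentially vacuous: for a braced polygon graph, $3$-connectivity forces every brace to be crossed by another brace in the interior of the polygon (as the paper observes in the proof of Proposition~\ref{operation}), so the distinguished Hamilton cycle cannot be the outer face of a planar embedding once there is any bracing at all; the non-planar case is the whole case, and that is where your argument is missing. Second, your induction on $|B|$ to get positive stresses on $B$ is a reasonable alternative to the paper's treatment (the paper instead chooses the sign of $\epsilon$ within the same energy argument), and the linear-algebra step ($\omega'+\varepsilon\eta$) is plausible given that adding a brace to an already rigid framework introduces a new dependence, but it inherits the unproven base case. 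Also note that minimal $3$-connectivity is not what you should lean on here: the statement is about all $3$-connected braced polygon graphs, and the paper's Proposition~\ref{operation} deliberately includes a third operation (adding a brace between existing vertices) to cover the non-minimal case; minimality only matters for the later ``all strictly convex realizations'' results, not for producing one good realization.
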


Proving Theorem \ref{stressExist} requires several additional ingredients that will be proved individually. First, we show there is an inductive procedure to produce any 3-connected braced polygon graph. Next, we show that if a convex but not strictly convex braced polygon is properly stressed, then there exists a nearby strictly convex configuration with a proper stress by minimizing a certain energy function. 

\begin{proposition}\label{operation}
    Every 3-connected braced polygon graph $P$ with at least 4 vertices can be constructed from $K_4$ by adding one brace at a time using the following operations:
\begin{itemize}
    \item remove a boundary edge $(i,j)$, add a new vertex $k$ and create two new boundary edges $(i,k)$ and $(j,k)$, and connect $k$ to another vertex. 

    \item remove two boundary edges (possibly with one overlapping vertex) $(i_1,j_1)$ and $(i_2,j_2)$, add four boundary edges $(i_1,k_1),(j_1,k_1),(i_2,k_2),(j_2,k_2)$ and a brace $(k_1,k_2)$.

    \item add a brace between two existing vertices. 
\end{itemize}

In addition, 3-connectivity is preserved at every step of the above construction and, if $P$ is minimally 3-connected, then it can be constructed by only using the first 2 operations. 
\end{proposition}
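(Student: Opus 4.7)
The plan is to prove Proposition~\ref{operation} by induction on the lexicographic pair $(|V(P)|,|E(P)|)$, with base case $P=K_4$. For the inductive step I would split on whether $P$ is minimally 3-connected. If $P$ is not minimally 3-connected, I would pick a brace $b$ whose removal preserves 3-connectivity, apply the induction hypothesis to $P-b$, and append operation 3. This disposes of the non-minimal case and isolates the heart of the problem.

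If $P$ is minimally 3-connected and $P\ne K_4$, the classical fact that every minimally 3-connected graph has minimum degree 3 produces a vertex $v$ of degree 3, whose neighbours in the braced polygon graph are its two boundary neighbours $i,j$ and one brace neighbour $u$. I would attempt the inverse of operation 1 at $v$: delete $v$ together with its three incident edges and introduce the boundary edge $(i,j)$ to obtain a braced polygon graph $P'$. A direct case check on a putative 2-cut $\{a,b\}$ of $P'$ shows that $P'$ is 3-connected if and only if no 3-cut of $P$ contains $v$; in that event, induction on $P'$ plus operation 1 finishes the construction. Otherwise $P$ admits a 3-cut $\{v,a,b\}$, and the Hamilton-cycle structure forces the boundary arcs between $v,a,b$ to localise the components of $P-\{v,a,b\}$. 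In the smaller component I would locate a brace $(v_1,v_2)$ both of whose endpoints have degree 3, such that simultaneous suppression of the pair preserves 3-connectivity; this is the inverse of operation 2, and induction with a final operation 2 completes the construction.

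For the additional assertion that a minimally 3-connected $P$ admits a construction using only operations 1 and 2, I would strengthen the inductive hypothesis to require that the reduced graph remain minimally 3-connected. The key lifting argument is that a brace $b'$ whose removal preserves 3-connectivity of the reduced graph lifts to a removable brace of $P$: two internally disjoint paths between the endpoints of $b'$ in the reduced graph extend to analogous paths in $P-b'$ by replacing the collapsed boundary edge $(i,j)$ by the path $i$--$v$--$j$ (with an analogous replacement handling the inverse-operation-2 case), contradicting the minimality of $P$. Hence operation 3 is never forced.

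The main obstacle is the structural analysis in the sub-case where $v$ lies in a 3-cut: one must exhibit the inverse-operation-2 pair inside the smaller component and verify that the simultaneous suppression does not introduce a new 2-cut. This reduces to a careful inspection of 3-cuts through a degree-3 vertex in a 3-connected Hamiltonian graph, where the Hamilton cycle rigidly constrains the combinatorial form of the cut; minimality of $P$ should then force the boundary extremes of the smaller component to be degree-3 vertices joined by a brace, yielding the required pair.
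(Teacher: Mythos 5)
Your reduction strategy has a genuine gap at its core, namely in the sub-case where the chosen degree-3 vertex $v$ lies in a 3-cut. You only sketch this case, and the structural claim you propose to close it --- that minimality forces the boundary extremes of the smaller component of the 3-cut $\{v,a,b\}$ to be degree-3 vertices joined by a brace, yielding an inverse of operation 2 --- is false. Take the hexagon $1,\dots,6$ with braces $(6,2),(1,3),(2,4),(3,5)$. This braced polygon graph is minimally 3-connected (deleting any brace leaves a vertex of degree 2), and every brace has an endpoint of degree 4, so no brace joins two degree-3 vertices and no inverse of operation 2 is available anywhere in the graph. Yet if you choose $v=1$, suppression fails: $\{1,2,5\}$ is a 3-cut of $P$ whose smaller component is the single vertex $6$, containing no brace at all (the construction is instead rescued by choosing a different degree-3 vertex, e.g.\ $v=5$ or $v=6$, where the inverse of operation 1 does preserve 3-connectivity). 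What your induction actually needs is the disjunction ``either \emph{some} degree-3 vertex can be suppressed keeping 3-connectivity, or some brace joining two degree-3 vertices can be suppressed as an inverse of operation 2,'' and nothing in the proposal proves this; it is precisely the hard structural content of the proposition. Three smaller issues: the ``classical fact'' you invoke concerns graphs minimally 3-connected with respect to \emph{all} edges, whereas here minimality is only with respect to braces, so even the existence of a degree-3 vertex needs an argument; the inverse of operation 1 can create a multiple edge when $(i,j)$ is already a brace (as happens at $v=1$ and $v=4$ in the example above), which your case check does not address; and removability of a brace $b'$ is not the existence of two internally disjoint paths between its endpoints, so the lifting argument for the minimality statement needs to be rephrased (the correct route is that operation 1 applied to the 3-connected graph $P'-b'$ again yields a 3-connected graph).

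For comparison, the paper avoids this descending analysis entirely by building $P$ up rather than tearing it down: 3-connectivity forces every brace to be crossed by another brace, with all braces linked through crossings, so one can start from two crossing braces (a $K_4$) and add the remaining braces one at a time in an order in which each new brace crosses an already present one; this preserves 3-connectivity at every step and makes the choice among the three operations automatic. The minimality statement is then obtained by showing that a brace added via operation 3 remains removable forever, since later braces crossing it must also cross an existing crossing path between its endpoints. If you want to keep your top-down approach, you must prove the disjunction identified above, which amounts to the same structural work.
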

\begin{proof}
    Since $P$ is 3-connected, every brace must be crossed by another brace in the interior of the polygon and all braces must be connected by crossing points. Take 2 braces that cross each other and identify them with the 2 braces in $Q=K_4$ together with 4 vertices. 
    
\begin{figure}[H]
\centering
\includegraphics[scale=0.6]{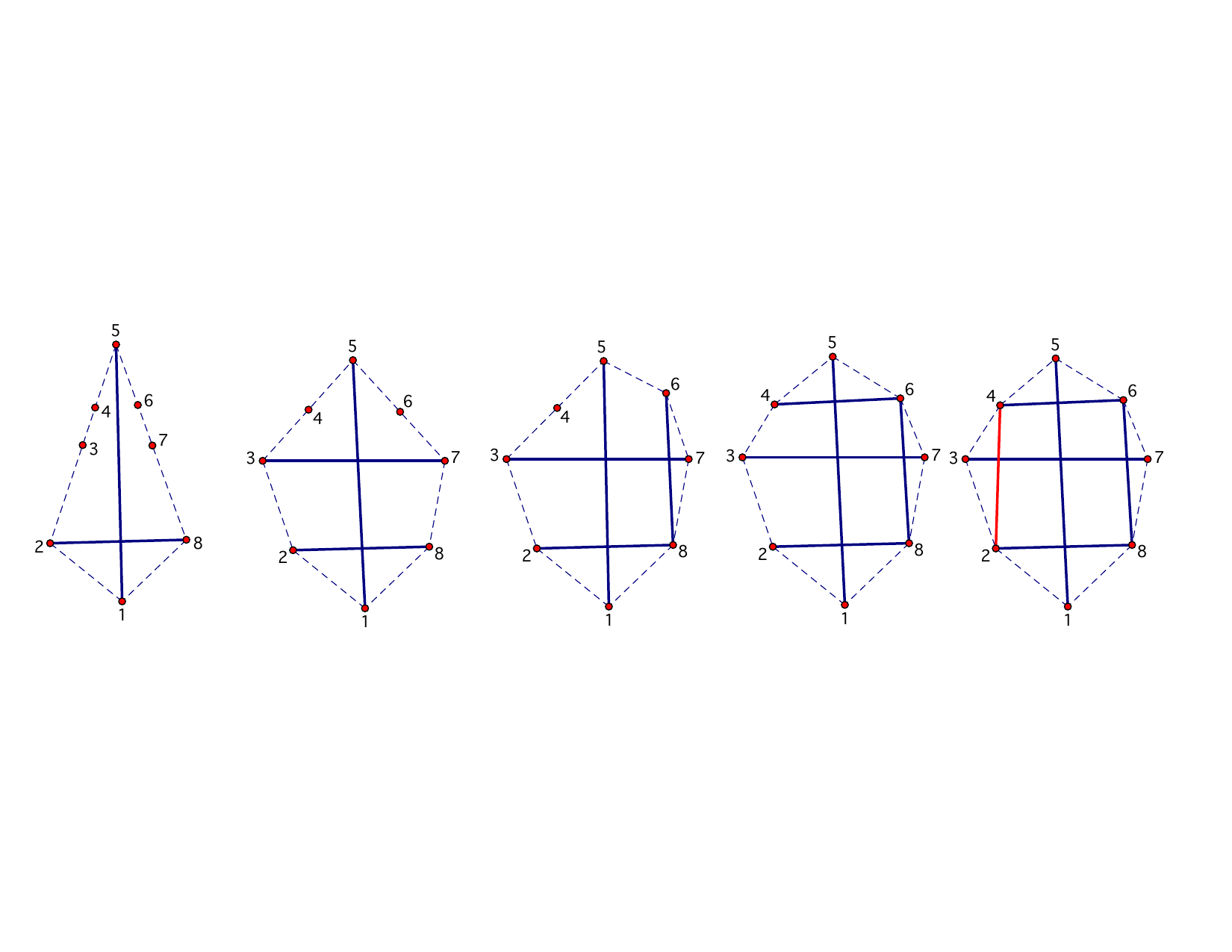}
\captionsetup{labelsep=colon,margin=1.3cm}
\caption{The degree 2 vertices are shown only to make the vertex correspondence clearer. They don't exist in our construction until they are added through a new brace. In each step, one brace that crosses existing braces is added, so the new graph must stay 3 vertex connected.} \label{fig:Sequence}
\end{figure}

    If $P\not= Q$, then there exists a brace $(k_1,k_2)$ from $P$ that crosses a brace identified in $Q$. Otherwise, the braces in $P$ identified in $Q$ and the remaining braces never cross each other, contradicting $P$ being 3-connected. Depending on the number of vertices in $k_1$ and $k_2$ identified in $Q$, choose one of the 3 operations so that the new edge $(k_1',k_2')$ has the same crossing pattern with the existing braces in $Q$ as $(k_1,k_2)$ with the corresponding braces in $P$. This procedure is demonstrated in Figure \ref{fig:Sequence}. 

    To prove the last statement, we show that if we do the last operation, then the added edge will never become necessary for the 3-connectivity. Therefore, $P$ is not minimal if the last operation is used in the process. 

    Suppose that we can add $(k_1,k_2)$ to $Q$ by the last operation, then the new graph is immediately no longer minimal. Since $Q$ is 3-connected without $(k_1,k_2)$, there must be a path $L$  from $k_1$ to $k_2$ in the interior of the polygon joined by the crossings without $(k_1,k_2)$. Therefore, any edge added later that crosses $(k_1,k_2)$ must also cross the path $L$. As a result, all the braces are still connected by crossings, even if we remove $(k_1,k_2)$.
\end{proof}

\begin{proposition}\label{energyfunction}
    There exists a differentiable non-negative strictly monotonically increasing real function $f$ defined on $[0,\infty)$ such that given $a,m,\delta,\omega>0$, we have $f(a)=\frac{1}{m}$, $f'(a)=\omega$, and $f(a+\delta)>1$. Similarly, there exists a differentiable positive strictly monotonically decreasing function $g$ defined on $(0,\infty)$ such that given $a,m,\delta>0$ and $\omega<0$, we have $g(a)=\frac{1}{m}$, $g'(a)=\omega$, and $g(a-\delta)>1$. 
\end{proposition}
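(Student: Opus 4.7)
The plan is to construct $f$ and $g$ explicitly. In each case I would combine an exponential base function, which exactly matches the prescribed value, slope, and monotonicity at $a$, with a one-sided cubic correction that forces the growth condition at $a\pm\delta$ without disturbing the data at $a$.

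For $f$, I would set
$$f(x)=\frac{1}{m}\,e^{\omega m(x-a)}+K\bigl((x-a)_+\bigr)^{3},\qquad x\in[0,\infty),$$
where $(t)_+=\max\{t,0\}$ and $K>0$ is to be chosen. The exponential term is smooth, strictly positive, strictly increasing, and by construction takes value $1/m$ with slope $\omega$ at $x=a$. The corrective term is identically zero on $[0,a]$ and equals $K(x-a)^{3}$ on $[a,\infty)$; since it is $C^{2}$ with vanishing value, first, and second derivatives at $a$, the sum $f$ is differentiable, non-negative, and still satisfies $f(a)=1/m$ and $f'(a)=\omega$. On $[a,\infty)$ the derivative $\omega\,e^{\omega m(x-a)}+3K(x-a)^{2}$ is strictly positive, so $f$ is strictly increasing on all of $[0,\infty)$. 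Finally, choosing $K\geq 2/\delta^{3}$ yields $f(a+\delta)\geq K\delta^{3}+\tfrac{1}{m}e^{\omega m\delta}>1$.

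The construction of $g$ is symmetric: with $\omega<0$, set
$$g(x)=\frac{1}{m}\,e^{\omega m(x-a)}+K\bigl((a-x)_+\bigr)^{3},\qquad x\in(0,\infty).$$
The exponential is now strictly decreasing and strictly positive, with value $1/m$ and slope $\omega$ at $a$. The cubic bump, supported on $(0,a]$, is $C^{2}$ with vanishing derivatives of orders $0$, $1$, and $2$ at $a$, so $g$ is differentiable with $g(a)=1/m$ and $g'(a)=\omega$. Its derivative $-3K(a-x)^{2}$ on $(0,a)$ is non-positive, reinforcing the strict decrease of the exponential, and both summands are positive on $(0,\infty)$, giving positivity of $g$. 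Under the implicit hypothesis $\delta<a$ (needed for $a-\delta$ to lie in the domain), taking $K\geq 2/\delta^{3}$ gives $g(a-\delta)\geq K\delta^{3}>1$.

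There is no serious obstacle here; the only conceptual point is that a one-sided cubic is the lowest-degree bump whose value and first derivative both vanish at its boundary, which is exactly what is needed to graft growth onto the base function without perturbing the prescribed value and slope at $a$. All verifications reduce to elementary calculus once the formulas above are written down.
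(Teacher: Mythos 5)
Your construction is correct, and it verifies all the required properties: the exponential $\frac{1}{m}e^{\omega m(x-a)}$ matches the prescribed value and slope at $a$ and has the right monotonicity and sign, while the one-sided cubic $K((x-a)_+)^3$ (resp.\ $K((a-x)_+)^3$) is $C^2$ with vanishing value and first derivative at $a$, so it grafts on the growth condition at $a+\delta$ (resp.\ $a-\delta$) without disturbing anything else; taking $K\ge 2/\delta^3$ closes the argument. This is a genuinely different, and in some ways tighter, route than the paper's: there the function is built by specifying its \emph{derivative} pictorially --- a positive curve from $(0,0)$ to $(a,\omega)$ enclosing area $\frac1m$, then a steep straight segment up to $(a+\delta,\frac{2}{\delta})$ contributing area at least $1$, then anything continuous of the right sign --- and then integrating, with the decreasing case handled by an improper integral from $\infty$ so that $g$ stays positive. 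The paper's approach makes the role of each condition transparent (each is an area or an endpoint of the graph of $f'$) but leaves the actual formula implicit, whereas yours produces a closed-form function whose properties are checked by direct differentiation; both are entirely adequate for the application in Theorem~\ref{stressExist}. The only point worth flagging, which you already note, is that the condition $g(a-\delta)>1$ presupposes $\delta<a$ so that $a-\delta$ lies in the domain $(0,\infty)$; this is implicit in the paper as well, since $\delta$ is eventually taken arbitrarily small.
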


\begin{proof}
    To construct $f$, consider the graph of the derivative $f'(x)$. Draw a curve in the first quadrant from $(0,0)$ to $(a,\omega)$ with an area above the interval $(0,a)$ being $\frac{1}{m}$, then connect $(a,\omega)$ to $(a+\delta,\frac{2}{\delta})$ with a straight line. The remaining $f'$ only has to be continuous and positive. Let $f(x):=\int_{0}^{x}f'(y)dy$. The construction of $f'$ is shown in Figure \ref{fig:fprime}. 

    \begin{figure}[H]
    \centering
    \includegraphics[scale=0.5]{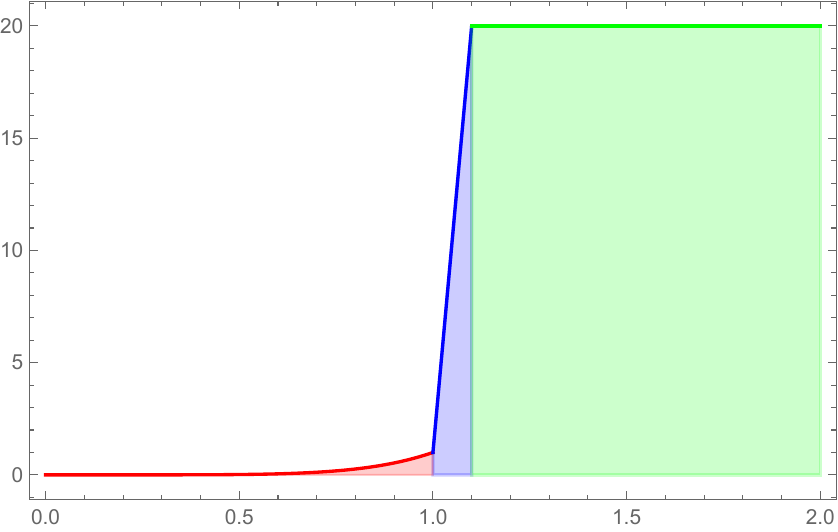}
    \captionsetup{labelsep=colon,margin=1.3cm}
    \caption{This is the derivative of $f$. The red region has area $\frac{1}{m}$. The blue region has area at least $1$. The rest of the function only has to be positive and continuous.} \label{fig:fprime}
    \end{figure}

    To construct $g$, consider $g'(x)$. Connect $(a,\omega)$ to $(a-\delta,-\frac{2}{\delta})$ with a straight line. $g'(x)$ can be any negative curve with an area above the interval $(a,\infty)$ converging to $\frac{1}{m}$ as $x\to\infty$. The remaining part of $g'$ just has to be continuous and negative. Now, let $g(x):=\int_{\infty}^{x}g'(y)dy$. 
\end{proof}

\begin{figure}[H]
\centering
\includegraphics[scale=0.35]{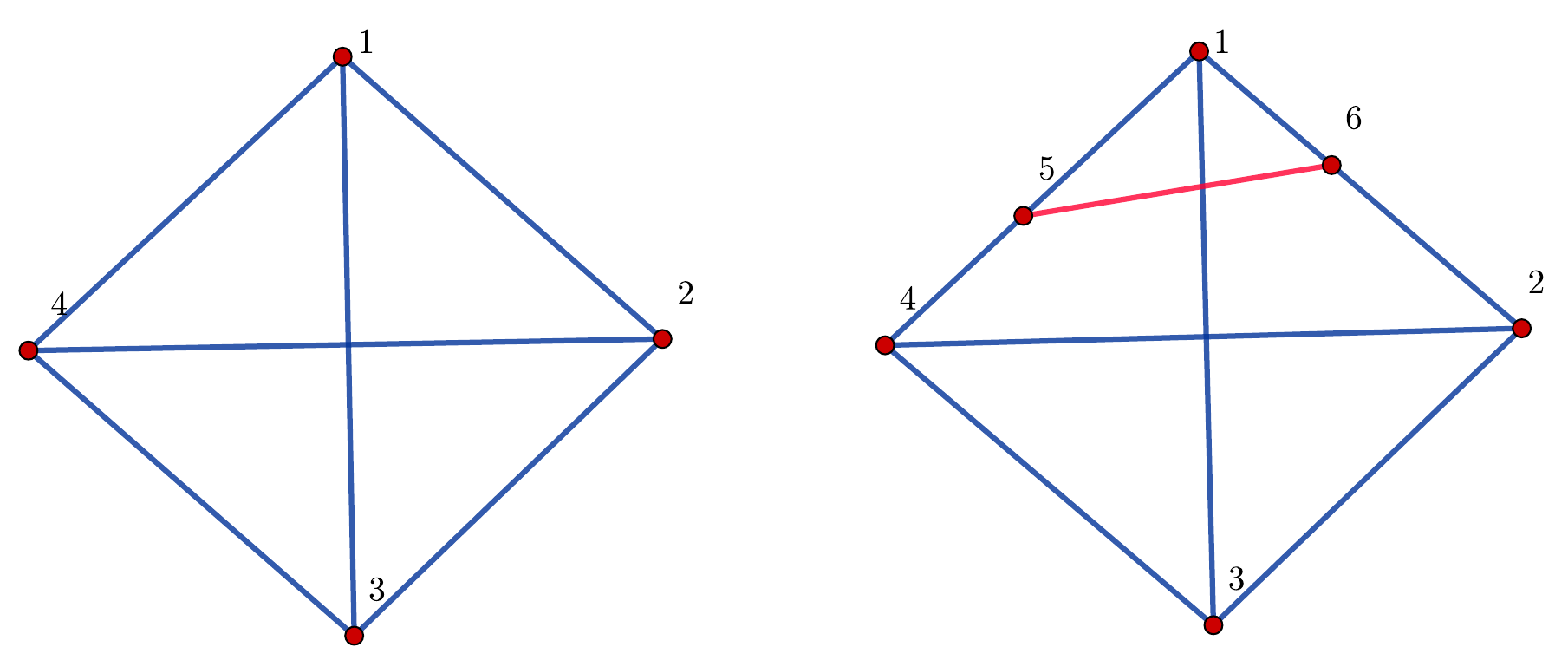}
\captionsetup{labelsep=colon,margin=1.3cm}
\caption{brace $(5,6)$ is added by the second operation defined in Proposition \ref{operation}} \label{fig:addedge}
\end{figure}

\medskip
Now we are ready to prove Theorem \ref{stressExist}.

\medskip

\noindent
{\bf Proof of Theorem \ref{stressExist}:}
    We proceed by induction. Since $P$ is minimally 3-connected, it can be constructed using 2 operations according to Proposition \ref{operation}. The base case, $K_4$, can be placed on a square to generate a stress with the desired signs. For the induction step, we need to show that if a braced polygon $P$ is properly stressed in a strictly convex configuration, then the braced polygon $Q$ after adding a brace $(k_1,k_2)$ through an operation can also be properly stressed in a strictly convex configuration. Figure \ref{fig:addedge} is a case where the brace $(5,6)$ is added.

    The same embedding as $P$ with new vertices placed on existing edges (shown in Figure \ref{fig:addedge}) immediately gives an equilibrium stress $\omega$ on $Q$ where all forces (stress times length) on existing edges remain unchanged and the stress on $(k_1,k_2)$ is $0$. We call this configuration $(Q,\mathbf{p}_0)$. There are two problems that need to be solved: $(Q,\mathbf{p}_0)$ is no longer strictly convex and the stress is not strictly negative on the new brace. 

    Let $p_i$ be the coordinate of the vertex $i$ in $(Q,\mathbf{p})$, $l(\mathbf{p})$ be the length function maps a configuration $\mathbf{p}$ to $\mathbb{R}^m$, $l_{ij}(\mathbf{p})$ be the distance from $p_i$ to $p_j$, $m$ be the number of edges in $Q$, $\epsilon,\delta>0$ be real numbers, $\omega_{ij}$ be the stress on the edge $(i,j)$ with the only exception $\omega_{k_1k_2}=-\epsilon$. By Proposition \ref{energyfunction}, we can find monotonic and differentiable energy functions $E_{ij}(l^2_{ij})$ for each edge $(i,j)$ such that $E_{ij}=\frac{1}{m}$ at $\mathbf{p}_0$, $E_{ij}'(l_{ij}^2)=\omega_{ij}$, and $E_{ij}>1$ if $l_{ij}^2$ increases by $\delta$ for $\omega_{ij}>0$ or decreases by $\delta$ for $\omega_{ij}<0$. Let $E(Q,\mathbf{p})=\sum_{ij\in Q}E_{ij}(l_{ij}^2(\mathbf{p}))$. This energy function has a few good properties:
    \begin{itemize}
        \item At the given configuration $(Q,\mathbf{p}_0)$, $E(Q,\mathbf{p}_0)=\sum_{ij\in Q}\frac{1}{m}=1$
        \item Fixing a vertex $i$, $\frac{\partial }{\partial x_i}E_{ij}(l_{ij}^2)=2E_{ij}'(l^2_{ij})(x_i-x_j)$, hence a critical point of $E$ gives an equilibrium stress with value $E_{ij}'(l^2_{ij})$ on edge $(i,j)$
        \item Pulling $k_1$ and $k_2$ outwards infinitesimally in direction orthogonal to the boundary will decrease $E$, hence there exists a configurations nearby with $E<1$. 
    \end{itemize}
    
    Now we make the following observation: if $\delta$ is sufficiently small, then $l_{ij}(\mathbf{p})$ must stay close to  $l_{ij}(\mathbf{p}_0)$ in order to keep $E(Q,\mathbf{p})<1$. Suppose that a boundary edge $(i,j)$  decreases in length by $\xi$. Because the configuration $(Q,\mathbf{p}_0)$ is super stable, $\xi$ must converge to $0$ as $\delta$ goes to $0$. Therefore, we find that $l_{ij}(\mathbf{p})\to l_{ij}(\mathbf{p}_0)$ for all $\mathbf{p}$ such that $E(Q,\mathbf{p})<1$ as $\delta\to 0$. The argument for braces is similar. This implies that if $\delta$ is small enough, there is a critical point $l(\mathbf{q})$ of $E$ in the small neighborhood of $l(\mathbf{p}_0)$ giving the minimum of $E$. 

    Next, we claim that if $l(\mathbf{q})$ is close to $l(\mathbf{p}_0)$, then the configuration $\mathbf{q}$ must be close to $\mathbf{p}_0$ (up to Euclidean isometry). Consider the sequence $\delta_i=\frac{1}{i}$. Without loss of generality, we fix a vertex and the direction of an edge connected to it. Let $\{\mathbf{q}_i\}=\mathbf{q}_1,\mathbf{q}_2,...$ be a sequence of possible configurations. Since the lengths are all bounded, a configuration will always stay in a compact neighborhood. Therefore, some subsequence of $\{\mathbf{q}_i\}$ converges to a point $\Tilde{\mathbf{p}}$. If $\Tilde{\mathbf{p}}\not=\mathbf{p}_0$, then we have another configuration with $l(\Tilde{\mathbf{p}})=l(\mathbf{p}_0)$, which contradicts super stability. 

    Finally, for a chosen $\epsilon$, pick a small enough $\delta$, and let a critical point of $E$ near $\mathbf{p}_0$ be $\mathbf{q}$. $E'(l^2(\mathbf{q}))$ is an equilibrium stress. If $\delta$ is sufficiently small, no stress from $P$ can change sign because $E'$ is continuous. Notice that the stress on $(k_1,k_2)$ must be negative because our $E_{k_1 k_2}$ strictly decreases in Proposition \ref{energyfunction}. 

    To see that $\mathbf{q}$ is strictly convex, all vertices are in a small neighborhood of $\mathbf{p}$, therefore, every vertex that was strictly convex stays so with a sufficiently small $\delta$. For the newly added vertices, they have degree 3 with positive stress on the boundary and negative stress on a brace, so they must be strictly convex.

    By the same energy minimization argument, if an edge is not necessary for the 3 vertex connectivity, then it can have either positive or negative stress. We can pick $\epsilon$ to be either positive or negative in this case and a small $\delta$. A point in the small neighborhood of a strictly convex polygon must also be strictly convex. This concludes the last part of Theorem \ref{stressExist}. 
    \qed

\medskip

\subsection{Minimal 3-connectivity and circuits}
Minimally 3-connected braced polygon graphs enjoy the following special property.
\begin{lem}\label{lem:3con_2stresses}
Let $G$ be a minimally 3-connected braced polygon graph, $(G,\p)$ be a strictly convex realisation of $G$
and $\omega$ be a non-zero equilibrium stress of $(G,\bp)$ 
that is non-positive on the interior braces.
Then $\omega$ is a proper stress and  the space of equilibrium stresses of $(G,\bp)$ has dimension one.
\end{lem}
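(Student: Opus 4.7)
The plan is to prove both conclusions by reducing to Proposition~\ref{prop:convex_stress} applied to carefully chosen sub-polygons, exploiting the 2-vertex cuts provided by minimal 3-connectivity.

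For the first conclusion (that $\omega$ is proper), Proposition~\ref{prop:convex_stress} already gives positivity on the boundary, so only strict negativity on the braces remains. I would argue by contradiction: suppose $\omega(e)=0$ for some brace $e$. Then $\omega$ is also an equilibrium stress on $G-e$, which is no longer 3-connected, so there is a 2-vertex cut $\{u,v\}$ of $G-e$. Since $G$ itself is 3-connected, the removed brace $e=xy$ must bridge the cut, with $x$ and $y$ lying in different components $A$ and $B$ of $(G-e)-\{u,v\}$, and neither endpoint equal to $u$ or $v$. Because no brace other than $e$ can connect the two arcs of the Hamilton cycle cut by $\{u,v\}$, the sets $A$ and $B$ are precisely the interiors of these two arcs, and in particular $u$ and $v$ are non-adjacent on the boundary cycle.

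The key construction is a strictly convex sub-polygon $(G_A^+,\bp|_{A\cup\{u,v\}})$ with Hamilton cycle $u,a_1,\dots,a_k,v,u$, obtained from the induced subgraph on $A\cup\{u,v\}$ by inserting the chord $uv$ if it is not already in $G$. Because $e$ is the only edge of $G$ leaving $A\cup\{u,v\}$ and $\omega(e)=0$, the restriction $\omega|_{G_A}$ is in equilibrium at every vertex of $A$. The usual total-force and total-torque balance then forces the net $A$-side resultant at $u$ to have the form $\lambda(p_u-p_v)$, with $-\lambda(p_u-p_v)$ at $v$. Setting $\omega_A^+(uv):=-\lambda$ and $\omega_A^+:=\omega$ on the remaining edges of $G_A^+$ therefore yields an honest equilibrium stress on $G_A^+$ which is non-zero (the boundary arc carries positive stress by Proposition~\ref{prop:convex_stress} applied to $G$) and non-positive on the braces of $G_A^+$. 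Proposition~\ref{prop:convex_stress} applied to $G_A^+$ then forces $-\lambda=\omega_A^+(uv)>0$, i.e.\ $\lambda<0$, and the symmetric construction on the $B$-side gives $\mu<0$ for the analogous scalar $\mu$. Equilibrium of $\omega$ at $u$ in $G$ now reads $(\lambda+\mu)(p_u-p_v)+\omega(uv)(p_u-p_v)\mathbf{1}[uv\in E(G)]=0$, and a short case analysis on the status of $uv$ in $G$ (not in $E(G)$; a brace of $G$; or a boundary edge of $G$) produces a contradiction in every case, using $\lambda+\mu<0$ together with $\omega\le 0$ on braces and the non-adjacency of $u,v$ on the cycle. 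Hence $\omega(e)<0$ for every brace $e$, and $\omega$ is proper.

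For the dimension statement, suppose for contradiction that the stress space has dimension at least $2$ and pick $\omega'$ linearly independent from $\omega$. If $\omega'$ vanishes on every brace, then $\omega'$ is an equilibrium stress supported on the boundary cycle, and strict convexity at each vertex (whose two incident boundary edge directions are linearly independent) forces $\omega'=0$, contradicting independence. Otherwise the family $\omega_t:=\omega+t\omega'$ starts at $t=0$ strictly negative on every brace (by the first part) and must therefore reach a first non-zero $t^*$ at which some brace stress first hits zero while all brace stresses remain $\le 0$. Then $\omega_{t^*}\neq 0$ by linear independence, it satisfies the hypotheses of the lemma, and it vanishes on a brace---exactly the situation the first part rules out. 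Thus the stress space has dimension $1$.

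The main obstacle is the sub-polygon construction in the first part: one has to verify that the total-force and total-torque balance genuinely aligns the $A$-side resultant with the chord $uv$, so that a single scalar adjustment on $uv$ extends $\omega|_{G_A}$ to an equilibrium stress on $G_A^+$, and that $G_A^+$ really is a strictly convex braced polygon to which Proposition~\ref{prop:convex_stress} applies. The dimension argument is then essentially a perturbation reduction to the first part.
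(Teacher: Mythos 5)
Your proof is correct, but your route to the first conclusion is genuinely different from the paper's. The paper disposes of a zero-stress brace in two lines: collecting all zero-stress braces into a set $Z$, it observes that $\omega$ is then a proper stress of $(G-Z,\p)$, invokes Theorem~\ref{thm:connelly} (super stability, hence global rigidity, hence $3$-connectivity of $G-Z$), and contradicts minimal $3$-connectivity. You instead stay entirely at the level of Proposition~\ref{prop:convex_stress}: you take the $2$-cut $\{u,v\}$ of $G-e$ guaranteed by minimality, split the polygon into the two sub-polygons determined by the cut, use force and torque balance to extend the restricted stress across the chord $uv$, and read off the sign of the chord stress from Proposition~\ref{prop:convex_stress} on each piece. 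I checked the resultant-alignment step and the three-way case analysis on the status of $uv$ (not an edge; a brace; impossible as a boundary edge since $u,v$ are non-consecutive), and they go through, modulo a small bookkeeping ambiguity about whether $\lambda$ includes the contribution of $uv$ when $uv\in E(G)$ --- either convention yields the contradiction $\omega(uv)>0$. What your approach buys is self-containedness: it avoids Theorem~\ref{thm:connelly} and the implicit appeal to Hendrickson's necessary condition (globally rigid $\Rightarrow$ $3$-connected), at the cost of a longer geometric argument. Your dimension argument is essentially the paper's, the only cosmetic difference being that you track sign changes only on the braces and handle the boundary-supported case separately, whereas the paper lets $\bar t$ be the first time \emph{any} edge stress vanishes; both are fine.
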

\begin{proof}
By Proposition~\ref{prop:convex_stress}, $\omega$ is positive on the boundary edges.
Suppose $\omega$ is zero on some interior brace.
Let $Z$ be the set of interior braces which have zero stress in $\omega$.
Then, $\omega$ is a proper stress of $(G-Z,\bp)$.
This implies from Theorem~\ref{thm:connelly} that  $G-Z$ is 3-connected.
Since $Z$ is non-empty,
this contradicts the minimal 3-connectivity of $G$.
Thus $\omega$ is a proper stress.

Suppose that the space of equilibrium stresses of $(G,\bp)$ has dimension at least two.
Let $\omega_1$ be a proper stress and $\omega_2$ be an arbitrary equilibrium stress that is not a scalar multiple of $\omega_1$.
We may assume that $\omega_1(e)$ and $\omega_2(e)$ have different signs on some edge $e$ (by taking the negative of $\omega_2$ if necessary).

Consider $\omega_t:=\omega_1+t\omega_2$ for a parameter $t$ starting at $t=0$, and increase $t$ continuously.
Let $\bar{t}$ be the smallest $t$ such that  $\omega_t(e)=0$ holds for some edge $e$.
Since $\omega_{\bar{t}}$ is non-positive on the interior braces,  the existence of $\omega_{\bar{t}}$ contradicts the former part of the statement.
\end{proof}

\medskip

Graphs whose edge sets are circuits in the generic $2$-dimensional rigidity matroid are important in the study of global rigidity in $\mathbb{R}^2$. We next consider braced polygon graphs which are generic rigidity circuits.
\begin{defn}
A graph $G$, with with $n$ vertices and $m$ edges, is called a \textit{generic rigidity circuit} if $m=2n-2$ and for every subgraph on $k$ vertices such that $2\leq k<n$, the number of edges is smaller than or equal to $2k-3$. 
A braced polygon graph which is a generic rigidity circuit is simply called a {\em braced polygonal circuit}.
\end{defn}

An important corollary of Theorem~\ref{stressExist} and Lemma~\ref{lem:3con_2stresses} is the following.

\begin{lem}\label{lem:3con_circuits}
Let $G$ be a minimally 3-connected braced polygon graph with $n$ vertices and $m$ edges.
If $m\geq 2n-2$, then $G$ is a generic rigidity circuit
and  every strictly convex braced polygon $(G,\bp)$ is properly stressed.
\end{lem}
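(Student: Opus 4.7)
The plan is to bootstrap both conclusions from a single strictly convex realisation $(G,\bp_0)$ carrying a proper stress $\omega_0$, whose existence is provided by Theorem~\ref{stressExist}. Since $\omega_0$ is non-zero and strictly negative on the interior braces, Lemma~\ref{lem:3con_2stresses} applies and shows that the space of equilibrium stresses at $\bp_0$ is one-dimensional.

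For the first conclusion (that $G$ is a generic rigidity circuit), the stress-space dimension equals $m-\mathrm{rank}(R(\bp_0))$; combined with the planar upper bound $\mathrm{rank}(R(\bp_0))\le 2n-3$ (valid since $\bp_0$ affinely spans $\mathbb{R}^2$) and the hypothesis $m\ge 2n-2$, this forces $m=2n-2$ and $\mathrm{rank}(R(\bp_0))=2n-3$. The rigidity matroid at $\bp_0$ then has co-rank one, so by submodularity it contains a unique circuit (two distinct circuits $C_1,C_2$ would force $|C_1\cup C_2|-\mathrm{rank}(C_1\cup C_2)\ge 2$). This unique circuit equals the support of any non-zero element of the one-dimensional stress space; since $\omega_0$ is proper, its support is all of $E(G)$, and hence $E(G)$ is the circuit. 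Consequently every proper subset of $E(G)$ is independent at $\bp_0$. The generic rank, squeezed between $\mathrm{rank}(R(\bp_0))=2n-3$ and the planar upper bound $2n-3$, also equals $2n-3$, so every proper subset is generically independent as well; Laman's theorem then yields the sparsity condition $|E(H)|\le 2|V(H)|-3$ for every subgraph $H$ on $k<n$ vertices, identifying $G$ as a generic rigidity circuit.

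For the second conclusion, let $S$ denote the space of strictly convex realisations of $G$ and $S_{\mathrm{proper}}\subseteq S$ those carrying a proper stress. I would show $S_{\mathrm{proper}}$ is non-empty, relatively open, and relatively closed in the connected space $S$, forcing $S_{\mathrm{proper}}=S$. Non-emptiness is Theorem~\ref{stressExist}. For openness, at each $\bp\in S_{\mathrm{proper}}$ Lemma~\ref{lem:3con_2stresses} gives $\mathrm{rank}(R(\bp))=2n-3$ (the planar maximum), and lower semi-continuity of rank preserves this in a neighbourhood; the one-dimensional stress bundle there admits a continuous non-vanishing section whose sign pattern, and hence properness, is stable under perturbation. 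For closedness in $S$, given $\bp_k\to\bp\in S$ with unit-norm proper stresses $\omega_k$, extract a subsequential limit $\omega$: it is a non-zero equilibrium stress at $\bp$ that is non-positive on the interior braces, so Lemma~\ref{lem:3con_2stresses} upgrades it to a proper stress of $(G,\bp)$.

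The main obstacle is verifying the connectivity of $S$: strict convexity is not preserved under linear interpolation in $(\mathbb{R}^2)^n$ (cf.~Figure~\ref{fig:parallel}), so one cannot simply straight-line a path between two realisations. My plan is to parametrise a strictly convex realisation, modulo translation, by its boundary edge-vector tuple $(e_1,\ldots,e_n)$ subject to the closure condition $\sum_i e_i=0$ and the strict-convexity constraint that the arguments $\mathrm{arg}(e_i)$ cycle strictly monotonically with consecutive jumps less than $\pi$; in the resulting $(r_i,\theta_i)$-coordinates the parameter region is open and connected, supplying the path-connectivity of $S$ and completing the open--closed argument.
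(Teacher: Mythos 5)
Your proposal is correct and follows essentially the same route as the paper: both derive $m=2n-2$ and the circuit property from Theorem~\ref{stressExist} together with the one-dimensionality of the stress space given by Lemma~\ref{lem:3con_2stresses}, and both establish properness everywhere by combining connectivity of the space of strictly convex realisations with a unit-norm compactness limit that Lemma~\ref{lem:3con_2stresses} upgrades to a proper stress. Your open--closed packaging replaces the paper's ``follow a path and rule out the first failure of infinitesimal rigidity'' argument, but the ingredients are identical, and your edge-vector parametrisation fills in the connectivity claim that the paper simply asserts.
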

\begin{proof}
By Theorem~\ref{stressExist}, 
there is a strictly convex framework $(G,\bq)$ having a proper stress.
By Lemma~\ref{lem:3con_2stresses}, the space of equilibrium stresses of $(G,\bq)$ has dimension one.
Therefore, by $m\geq 2n-2$, 
we must have $m=2n-2$ and the rigidity matrix has rank equal to $2n-3$.
Moreover, every edge has a non-zero stress in the proper stress, which means that  $G$ is a circuit.

We now show that every strictly convex polygon $(G,\bp)$ is properly stressed.
Let $G_1,\dots, G_s$ be the set of generically minimally rigid spanning subgraphs of $G$,
and let $N_i$ be an open subspace of the configuration space of strictly convex realizations of $G$ such that 
$G_i$ forms a minimally infinitesimally rigid framework $(G_i,\bp)$ for any $\bp\in N_i$.
Over $N_i$, the rigidity matrix can be solved with respect to the row vectors associated with $G_i$ (after an appropriate pin down), so the space of equilibrium stresses can be described as a rational function in the entries of $\bp$.

As we have seen above, $G$ has an infinitesimally rigid strictly convex realization $(G,\bp_0)$ having a proper stress $\omega_0$.
Consider any strictly convex framework $(G,\bp_1)$ of $G$.
Since the configuration space of a convex polygon with fixed  
graph is connected, we can continuously deform $(G,\bp_0)$ into $(G,\bp_1)$ 
by a continuous path $\bp_t\ (t\in [0,1])$ within the space of convex polygons. 
Suppose that $(G,\bp_t)$ is infinitesimally rigid for any $t\in [0,1]$, i.e., the continuous path is covered by $\bigcup_{i=1}^s N_i$.
Then there is a non-zero equilibrium stress $\omega_t$ of $(G,\bp_t)$ such that the stress varies continuously in $t$.
Then, by Lemma~\ref{lem:3con_2stresses} and the fact that $\omega_0$ is a proper stress, $\omega_t$ remains a proper stress for all $t\in [0,1]$.
In particular, $(G,\bp_1)$ is properly stressed.

Suppose $(G,\bp_t)$ is not infinitesimally rigid for some $t$.
Since $(G,\bp_0)$ is infinitesimally rigid, we have $t>0$.
We may assume that $(G,\bp_{t'})$ is infinitesimally rigid for any $t'\in [0,t)$ and is properly stressed (by the same argument as above).
We consider a sequence $\{\bp_{t_i}\}_{i=1,2.\dots}$ of point configurations that converges to $\bp_{t}$.
Let $\omega_{t_i}$ be a proper stress of $(G,\bp_{t_i})$ having unit norm. By the compactness of the unit sphere, a subsequence of $\{\omega_{t_i}\}_{i=1,2,\dots}$ converges to a nonzero $\omega$, which is an equilibrium stress of $(G,\bp_{t})$.
Since $\omega$ is non-positive on the interior braces, 
Lemma~\ref{lem:3con_2stresses} implies that $\omega$ is a proper stress.
However, since $G$ is a circuit and $(G,\bp_{t})$ is not infinitesimally rigid, the space of equilibrium stressed of $(G,\bp_{t})$ is at least two, contradicting Lemma~\ref{lem:3con_2stresses}.  
This completes the proof.
\end{proof}

\subsection{Main theorems}
We are now ready to give our main result.
\begin{thm}\label{thm:minimally}
Let $n\geq 4$.
The following are equivalent for a minimally 3-connected braced polygon graph $G$ with $n$ vertices.
\begin{itemize}
\item[(a)]  All strictly convex braced polygons $(G,\bp)$ are infinitesimally rigid.
\item[(b)] All strictly convex  braced polygons $(G,\bp)$ are convexly rigid.
\item[(c)]  All strictly convex  braced polygons $(G,\bp)$ are globally rigid.
\item[(d)] All strictly convex  braced polygons $(G,\bp)$ are super stable.
\item[(e)] All strictly convex braced polygons $(G,\bp)$ are properly stressed.
\item[(g)] $G$ is a generic rigidity circuit.
\item[(h)] $G$ has $n-2$ internal braces.
\end{itemize}
\end{thm}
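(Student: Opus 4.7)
My plan is to establish all of the equivalences by first collecting the easy implications (which fall straight out of the results built up in this section) and then carrying out the single substantive step (a) $\Rightarrow$ (g), which closes the cycle.

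The easy half goes as follows. (a) $\iff$ (b) is Theorem~\ref{convex} applied to $G$. (c) $\Rightarrow$ (b) is immediate from the definitions, since convex rigidity only demands non-congruence against strictly convex comparison frameworks, whereas global rigidity demands non-congruence against all frameworks in $\mathbb{R}^2$. (d) $\Rightarrow$ (c) follows from the standard fact that super stability implies universal rigidity \cite[Theorem 5.14.1]{book}, which in turn implies global rigidity in $\mathbb{R}^2$. (e) $\Rightarrow$ (d) is Theorem~\ref{thm:connelly} applied pointwise at each strictly convex realization. (g) $\Rightarrow$ (e) is part of the conclusion of Lemma~\ref{lem:3con_circuits}. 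Finally, (g) $\iff$ (h) is edge counting: a braced polygon graph on $n$ vertices has $m = n + (\text{number of braces})$, while a generic rigidity circuit on $n$ vertices has exactly $m = 2n-2$, so (g) $\Rightarrow$ (h); conversely, (h) says $m = 2n-2$, and Lemma~\ref{lem:3con_circuits} then upgrades this edge bound to the full circuit conclusion.

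The heart of the argument is the implication (a) $\Rightarrow$ (g). Since $G$ is 3-connected, Theorem~\ref{stressExist} produces a strictly convex realization $(G,\bq)$ carrying a proper, hence non-zero, equilibrium stress $\omega$. By (a), $(G,\bq)$ is infinitesimally rigid, so the rigidity matrix $R(\bq)$ has maximal rank $2n-3$. The space of equilibrium stresses is the left kernel of $R(\bq)$, and so has dimension $m - (2n-3)$; the existence of the non-zero $\omega$ forces this dimension to be at least one, giving $m \geq 2n-2$. Lemma~\ref{lem:3con_circuits} now applies, yielding that $G$ is a generic rigidity circuit, which is (g). As a byproduct the lemma simultaneously delivers (e), so this single step really does close the chain.

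I do not expect a genuine obstacle here: the analytic and combinatorial content of the theorem has already been packaged into Theorem~\ref{stressExist} and Lemma~\ref{lem:3con_circuits}. The only matter to be careful about is matching hypotheses, since Theorem~\ref{stressExist} requires only 3-connectivity to produce a single strictly convex realization with a proper stress, whereas Lemma~\ref{lem:3con_circuits} relies on the full minimal 3-connectivity (together with the edge bound $m \geq 2n-2$) in order to propagate the proper-stress property to every strictly convex realization.
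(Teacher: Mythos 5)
Your proposal is correct and follows essentially the same route as the paper: the same chain of easy implications via Theorem~\ref{convex}, Theorem~\ref{thm:connelly}, and Lemma~\ref{lem:3con_circuits}, closed by the same (a)~$\Rightarrow$~(g) step combining Theorem~\ref{stressExist} with the rank count $m \geq 2n-3+1$. Your added care in noting that Theorem~\ref{stressExist} only needs 3-connectivity while Lemma~\ref{lem:3con_circuits} needs minimal 3-connectivity is a correct reading of the hypotheses.
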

\begin{proof}
The equivalence between (a) and (b) follows from 
Theorem~\ref{convex}.
(c) implies (b)  by definition,
and (d) implies (c) by Connelly's super stability theorem.
(e) implies (d) by Theorem~\ref{thm:connelly}.
(g) implies (e) by Lemma~\ref{lem:3con_circuits}.
Finally, Suppose (a) holds. 
By Theorem~\ref{stressExist},
$G$ has an infinitesimally rigid strictly convex framework $(G,\bp)$ with a proper stress.
Hence, $m\geq 2n-3+1=2n-2$.
By Lemma~\ref{lem:3con_circuits},
$G$ is a  generic rigidity circuit and (g) follows. Similarly, Lemma~\ref{lem:3con_circuits} shows that (g) and (h) are equivalent.
\end{proof}

\medskip

Geleji and Jord{\'a}n~\cite[Theorem 1.2]{robust} gave a criterion, known as the ``unique interval property"(See Appendix), that is equivalent to all convex configurations of a braced polygonal circuit having a proper stress. We show that this is also equivalent to minimal 3-connectedness.
\begin{thm}\label{m3p}
    Let $G$ be a braced polygon graph with $n$ vertices and $2n-2$ edges.
    Then, $G$ is minimally 3-connected if and only if all strictly convex braced polygons  $(G,\bp)$ are properly stressed.
\end{thm}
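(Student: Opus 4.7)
The forward implication is immediate: Lemma~\ref{lem:3con_circuits} applied with $m=2n-2$ says that a minimally 3-connected braced polygon graph with this edge count forces every strictly convex realisation to be properly stressed. For the converse, assume every strictly convex $(G,\bp)$ is properly stressed; the plan is to show $G$ is minimally 3-connected in three stages: promote the hypothesis to infinitesimal rigidity with a one-dimensional equilibrium stress space, deduce 3-connectivity by a direct reflection argument, and rule out removable braces using Theorem~\ref{stressExist}.

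Stage~1 is quick. Theorem~\ref{thm:connelly} upgrades each proper stress to super stability, hence to global rigidity in $\mathbb{R}^2$, hence to convex rigidity; Theorem~\ref{convex} then yields infinitesimal rigidity at every strictly convex $(G,\bp)$. Combined with $m=2n-2$, the rigidity matrix must have rank $2n-3$, so the equilibrium stress space of every such framework is exactly one-dimensional and spanned by the proper stress.

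For Stage~2, suppose for contradiction that $G$ has a 2-vertex cut $\{u,v\}$. Since $G$ is Hamiltonian, $u$ and $v$ are non-adjacent on the boundary cycle and the remaining vertices split into two nonempty arcs $A$ and $B$, with no edge of $G$ across the cut. Reflecting the $A$-vertices of a strictly convex $(G,\bp)$ across the line through $p_u$ and $p_v$ produces a framework $(G,\bp')$ with identical bar lengths. Strict convexity rules out three collinear polygon vertices, so $B\cup\{u,v\}$ already contains three non-collinear points; any isometry matching $\bp$ to $\bp'$ must therefore fix these three points and be the identity, which would force each $A$-vertex to coincide with its reflection, i.e.\ to lie on the line $p_u p_v$---again impossible by strict convexity. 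Thus $(G,\bp')$ witnesses failure of global rigidity of $(G,\bp)$, contradicting Stage~1. For Stage~3, if a brace $b$ could be removed with $G-b$ still 3-connected, Theorem~\ref{stressExist} would produce a strictly convex $(G,\bq)$ carrying an equilibrium stress $\omega$ that is proper on $G-b$ but has $\omega(b)>0$; comparing $\omega$ with the assumed proper stress $\omega'$ of $(G,\bq)$ inside the one-dimensional stress space from Stage~1 gives a scalar $c$ with $\omega=c\omega'$ that must be simultaneously positive (from matching signs on boundary edges) and negative (from the opposite signs on $b$), a contradiction.

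The main conceptual step is Stage~1: it converts the purely stress-theoretic hypothesis of Theorem~\ref{m3p} into the rigidity-matroid-level fact that the equilibrium stress space is one-dimensional, after which Stages~2 and~3 reduce to elementary geometric and sign arguments. The subtle point I expect to need care with is ensuring in Stage~2 that the reflection is never a trivial isometry; strict convexity of the polygon, which forbids collinear triples of polygon vertices, is precisely what makes this automatic, and a weaker convexity hypothesis would force separate consideration of frameworks symmetric across the chord $p_u p_v$.
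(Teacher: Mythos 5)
Your proof is correct, and while it runs on the same engine as the paper's --- Theorem~\ref{stressExist} plus the one-dimensionality of the equilibrium stress space at an infinitesimally rigid realisation of a graph with $2n-2$ edges --- the logical organisation is genuinely different. The paper proves the converse by contraposition: it picks a spanning minimally 3-connected subgraph $H$ of $G$, uses Theorem~\ref{stressExist} to produce a properly stressed strictly convex $(H,\bp)$, and then either concludes via Theorem~\ref{convex} (if $(G,\bp)$ is not infinitesimally rigid) or observes that the unique stress of $(G,\bp)$ must vanish on $E(G)\setminus E(H)$, so $(G,\bp)$ is not properly stressed. You instead argue directly: you first promote the hypothesis to infinitesimal rigidity everywhere (via super stability, global rigidity, convex rigidity and Theorem~\ref{convex}), then establish 3-connectivity by the Hendrickson-style reflection across a chord $p_up_v$, and finally kill removable braces using the \emph{second} clause of Theorem~\ref{stressExist} (the realisation in which the removable brace carries a \emph{positive} stress) together with a sign comparison in the one-dimensional stress space. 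Two things your version buys: it explicitly handles the possibility that $G$ fails to be 3-connected at all, a case the paper's phrase ``pick a spanning minimally 3-connected subgraph of $G$'' silently presupposes away; and it correctly notes that the reflection argument needs full global rigidity (the reflected configuration is not convex), which your Stage~1 supplies. The paper's route is shorter because the zero-extension of the subgraph's stress avoids both the reflection argument and the second clause of Theorem~\ref{stressExist}. Both arguments are sound.
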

\begin{proof}
Suppose $G$ is minimally 3-connected.
By Lemma~\ref{lem:3con_circuits}, $G$ is a generic rigidity circuit.
By Theorem~\ref{thm:minimally}, all strictly convex braced polygon of $G$ are properly stressed.

To see the other direction, suppose that $G$ is not minimally 3-connected.
Then, pick a spanning minimally 3-connected subgraph $H$ of $G$.
By Theorem~\ref{stressExist}, there is a strictly convex framework $(H,\bp)$ which is properly stressed.
If $(G,\bp)$ is not infinitesimally rigid, then we are done
by Theorem~\ref{convex}.
So, assume $(G,\bp)$ is infinitesimally rigid.
Then the rank of the rigidity matrix of $(G,\bp)$ is $2n-3$
and the space of equilibrium stresses of $(G,\bp)$ is one dimensional by $m=2n-2$.
This in turn implies that
no edge in $E(G)\setminus E(H)$ can be stressed in $(G,\bp)$
(i.e., each edge in $E(G)\setminus E(H)$ is a coloop in the rigidity matroid of $(G,\bp)$).
In other words, $(G,\bp)$ is not properly stressed.
\end{proof}
\begin{remark}
    The condition of having $n-2$ braces guarantees the existence of a stress in any strictly convex configuration. Then the minimal 3-connectivity prevents the stress on any edge from changing its sign. Does having a stress in a minimally 3-connected convex braced polygon imply the existance of a proper stress in that particular configuration when there are not enough braces? The answer is negative, as shown in Figure \ref{fig:M3CUnderbraced}. This shows that it is necessary for the stress to vary continuously with respect to the configuration. 
\end{remark}
\begin{figure}[t]
\centering
\includegraphics[scale=.4]{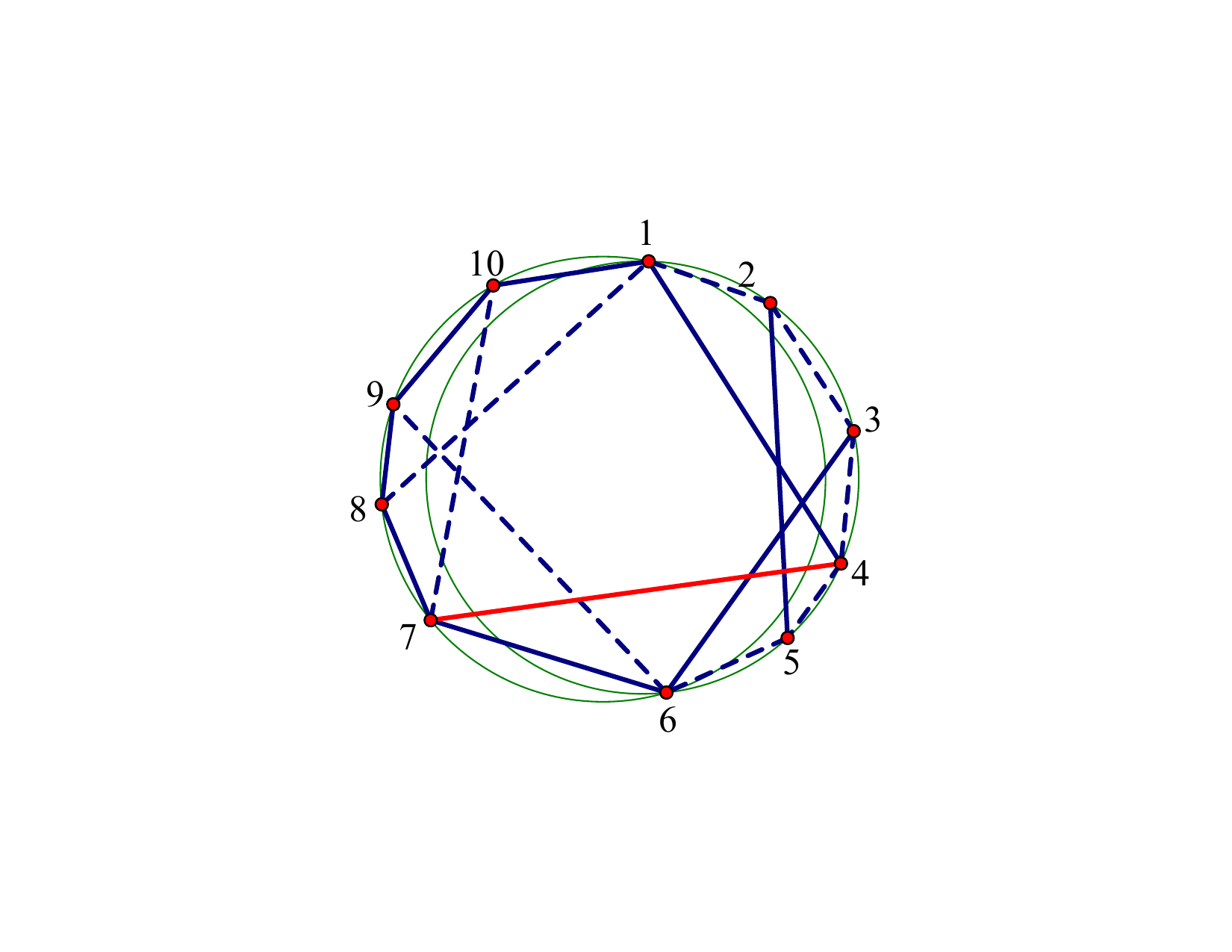}
\captionsetup{labelsep=colon,margin=1.3cm}
\caption{The underlying  braced polygon graph of this framework is minimally 3-connected. However, $e_{47}$ has zero stress when vertices $\{1,2,3,4,5,6\}$ are placed on one circle and vertices $\{6,7,8,9,10,1\}$ are placed on another circle. The set of convex stressed configurations is disconnected for this braced polygon graph so we cannot vary the stress continuously from a properly stressed configuration to this configuration. 
}
\label{fig:M3CUnderbraced}
\end{figure}

In order to interpolate between our result and the work of Geleji and Jord{\'a}n, 
 we will give a combinatorial proof for the equivalence between the unique interval property and minimal 3-connectivity for generic rigidity circuits in the appendix.

\begin{figure}[t]
\centering
\includegraphics[scale=1]{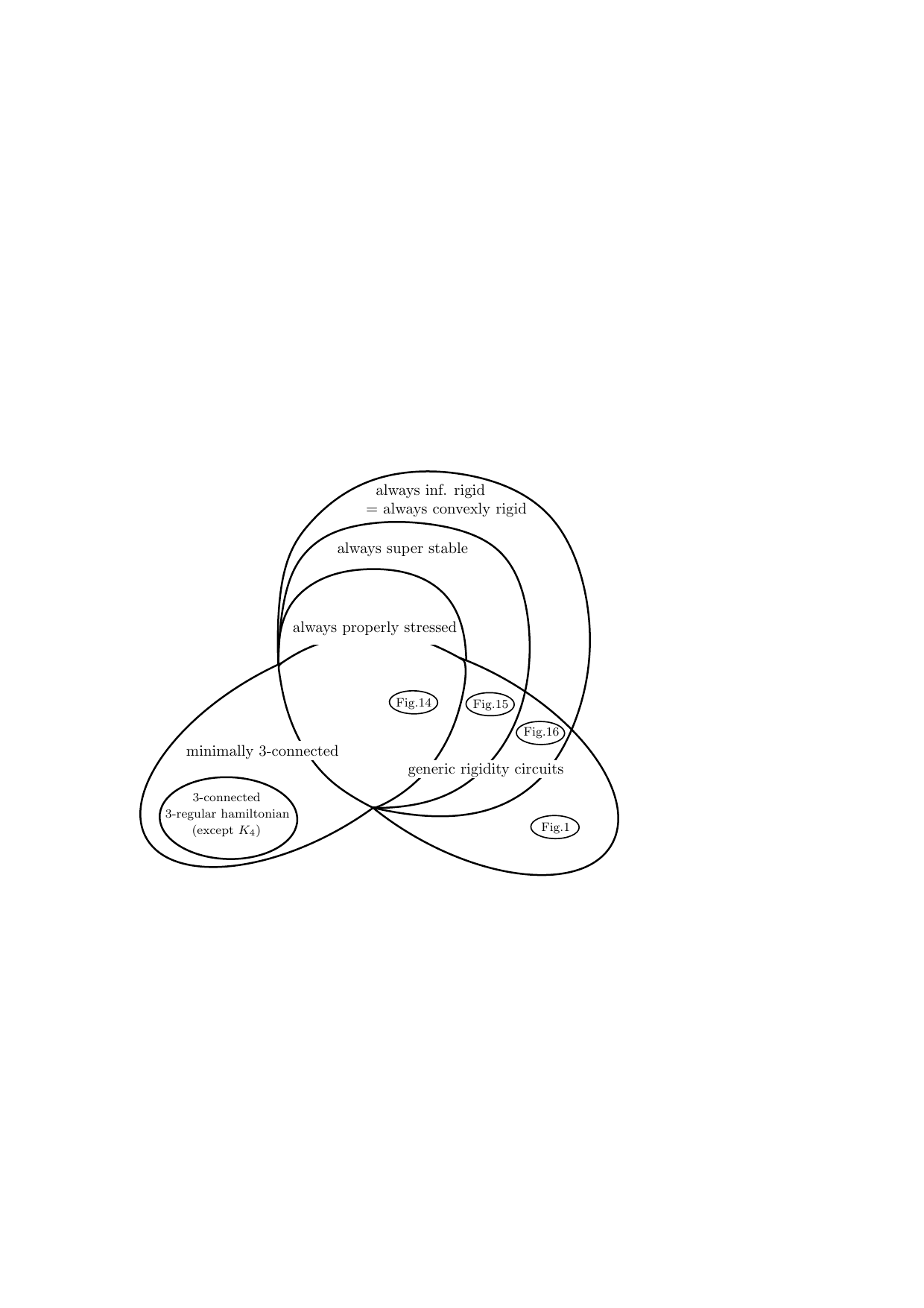}
\caption{Relation among properties of braced polygon graphs. 
}
\label{fig:diagram}
\end{figure}

The reader may wonder if all braced polygon graphs which satisfy one or more of the properties listed in  Theorem~\ref{thm:minimally} are   minimally 3-connected. This is not the case.
There are generic rigidity circuits which satisfy (a), (b), (c), or (d) in Theorem~\ref{thm:minimally} that are not minimally connected.
Figure~\ref{fig:diagram} summarizes the results.

\clearpage
\section{Examples and Questions}\label{Sect: Examples}
In the figures of braced polygonal graphs throughout this section, we will adopt the convention that all strictly convex plane realisations of the graphs shown have a stress which is positive on the dashed blue edges and negative on the solid blue edges. These stresses take positive values on red edges at some configurations and negative values on red edges at other configurations.

 \subsection{Superposition technique}
Consider the following basic question for convex braced polygons.
\begin{question}
    How does one tell if a braced polygon graph is universally rigid in all convex configurations? 
\end{question}
    
    One idea is that two braced polygon graphs known to be super stable in all convex configurations can be ``stuck" together through superposition to create another super stable  braced polygon graph. When two such graphs share 3 vertices not on a line, an edge on the bounding polygon of one (that necessarily has a positive stress) can cancel with a brace in the other (that necessarily has a negative stress). This is shown in Figure \ref{fig:superposition} where a convex braced heptagon graph is assembled by adding three smaller braced polygon graphs while deleting two edges. The fact that the resulting braced polygon graph is super stable in all convex configurations follows from the following result of Connelly, which can be proved using the same techniques as his proof of \cite[Lemma 4]{polygon}.

    \begin{lem}\label{lem:super}
    Let $(G,p)$ be a framework in general position in $\mathbb{R}^d$ and $G_1,G_2$ be subgraphs of $G$ such that $G=G_1\cup G_2$ and $|V(G_1)\cap V(G_2)|\geq d+1$. Suppose that  $G_i$ is super stable for some equilibrium stress $\omega_i$ for both $i=1,2$. Let $\omega:E(G)\to \mathbb{R}$ by putting $\omega(e)=\omega_1(e)+\omega_2(e)$, and taking $\omega_i(e)=0$ when $e\not\in E(G_i)$. Then $\omega$ is an equilibrium stress for $G$ and  $G$ is super stable for  $\omega$.       
    \end{lem}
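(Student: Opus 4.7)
The plan is to verify the three conditions comprising super stability for $(G, \omega)$: that $\omega$ is an equilibrium stress, that the associated stress matrix $\Omega$ is positive semidefinite of rank $n - d - 1$, and that every length-preserving affine transformation of $\mathbb{R}^d$ is a trivial motion of $(G,p)$. The equilibrium condition and positive semidefiniteness are immediate. At each vertex $v$, the equilibrium sum for $\omega = \omega_1 + \omega_2$ splits as the equilibrium sum for $\omega_1$ (which vanishes by super stability of $G_1$ when $v \in V(G_1)$, and is trivially zero otherwise) plus the analogous sum for $\omega_2$. Writing each $\Omega_i$ as an $n \times n$ matrix by padding with zero rows and columns on $V(G) \setminus V(G_i)$, the resulting $\Omega = \Omega_1 + \Omega_2$ is a sum of two PSD matrices, hence PSD.

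The main obstacle is to show $\operatorname{rank}(\Omega) = n - d - 1$. The key algebraic fact is that for PSD matrices, $\ker(\Omega_1 + \Omega_2) = \ker(\Omega_1) \cap \ker(\Omega_2)$, since $v^\top (\Omega_1 + \Omega_2) v = 0$ forces each summand to vanish. By super stability of $G_i$, the kernel of $\Omega_i$ viewed inside $\mathbb{R}^{|V(G_i)|}$ is the $(d+1)$-dimensional space of vectors $(f(p_v))_{v \in V(G_i)}$, where $f$ ranges over affine functions on $\mathbb{R}^d$. After zero-padding, the kernel of $\Omega_i$ inside $\mathbb{R}^n$ consists of vectors whose restriction to $V(G_i)$ has this form and whose coordinates on $V(G) \setminus V(G_i)$ are arbitrary. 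A vector in the intersection therefore determines affine functions $f_1, f_2 : \mathbb{R}^d \to \mathbb{R}$ that must agree on the configuration points indexed by $V(G_1) \cap V(G_2)$. The hypothesis $|V(G_1) \cap V(G_2)| \geq d+1$ combined with general position guarantees $d+1$ affinely independent agreement points, so $f_1 = f_2$ as affine functions on $\mathbb{R}^d$. Hence the common kernel consists exactly of evaluations of a single affine function on all of $p$, giving $\dim \ker(\Omega) = d+1$ and $\operatorname{rank}(\Omega) = n - d - 1$.

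Finally, any affine transformation of $\mathbb{R}^d$ that preserves all edge lengths of $G$ in particular preserves all edge lengths of $G_1$, so by the affine condition in the super stability of $G_1$ it must be a Euclidean isometry of $\mathbb{R}^d$. Such a transformation restricts to a trivial motion of $(G, p)$, completing the verification of super stability. The only nontrivial step is the rank computation, where the interaction of the overlap assumption with the general position hypothesis is essential; the other conditions reduce to bookkeeping.
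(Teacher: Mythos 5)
Your proof is correct, and it follows essentially the argument the paper has in mind: the paper gives no proof of Lemma \ref{lem:super} but appeals to the technique of Connelly's proof of Lemma 4 in \cite{polygon}, which is exactly your superposition argument (sum of PSD stress matrices is PSD, kernel of the sum is the intersection of the kernels, and the $d+1$ shared points in general position force the two affine functions to coincide, with the same overlap handling the affine-flex condition). The only compressed step is the final one, where you should note explicitly that $V(G_1)$ affinely spans $\mathbb{R}^d$, so an affine map agreeing with a congruence on those points is itself a Euclidean isometry.
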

    
    \begin{figure}[t]
    \centering
    \includegraphics[scale=0.5]{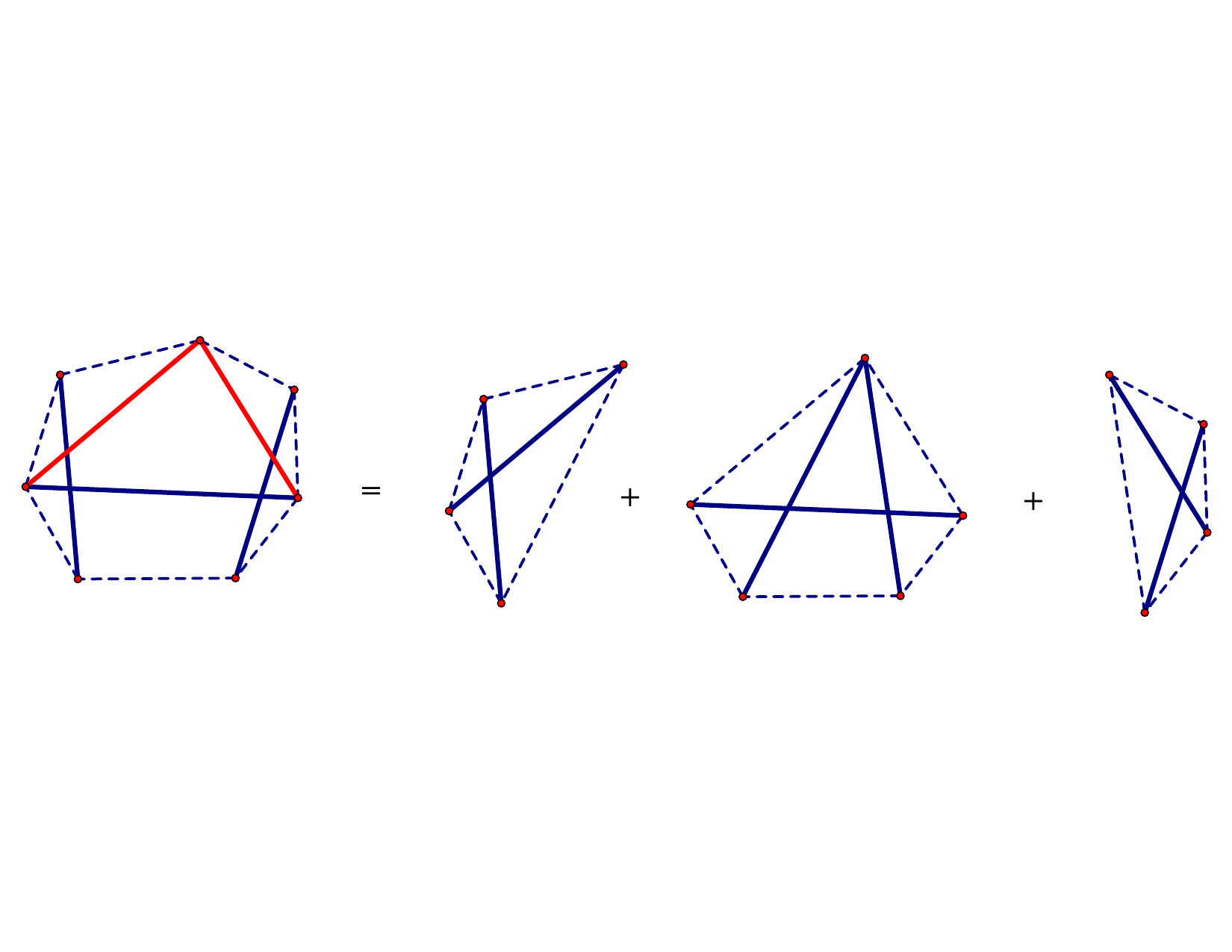}
    \captionsetup{labelsep=colon,margin=1.3cm}
    \caption{An example of assembling a braced polygon graph which is super stable in all strictly convex configurations from three smaller ones. 
    } \label{fig:superposition}
    \end{figure}

    The technique is not restricted to convex polygons, and sometimes it is possible to have super stable frameworks as in Figure \ref{fig:dificient}, where the vertices are in a strictly convex position, but not all the boundary edges of the corresponding convex polygon are part of the framework. Can you see how the superposition works here? In such a convex position, the boundary polygon edges and braces always have the corresponding positive and negative stresses, when in a strictly convex position.
\begin{figure}[t]
\centering
\includegraphics[scale=0.4]{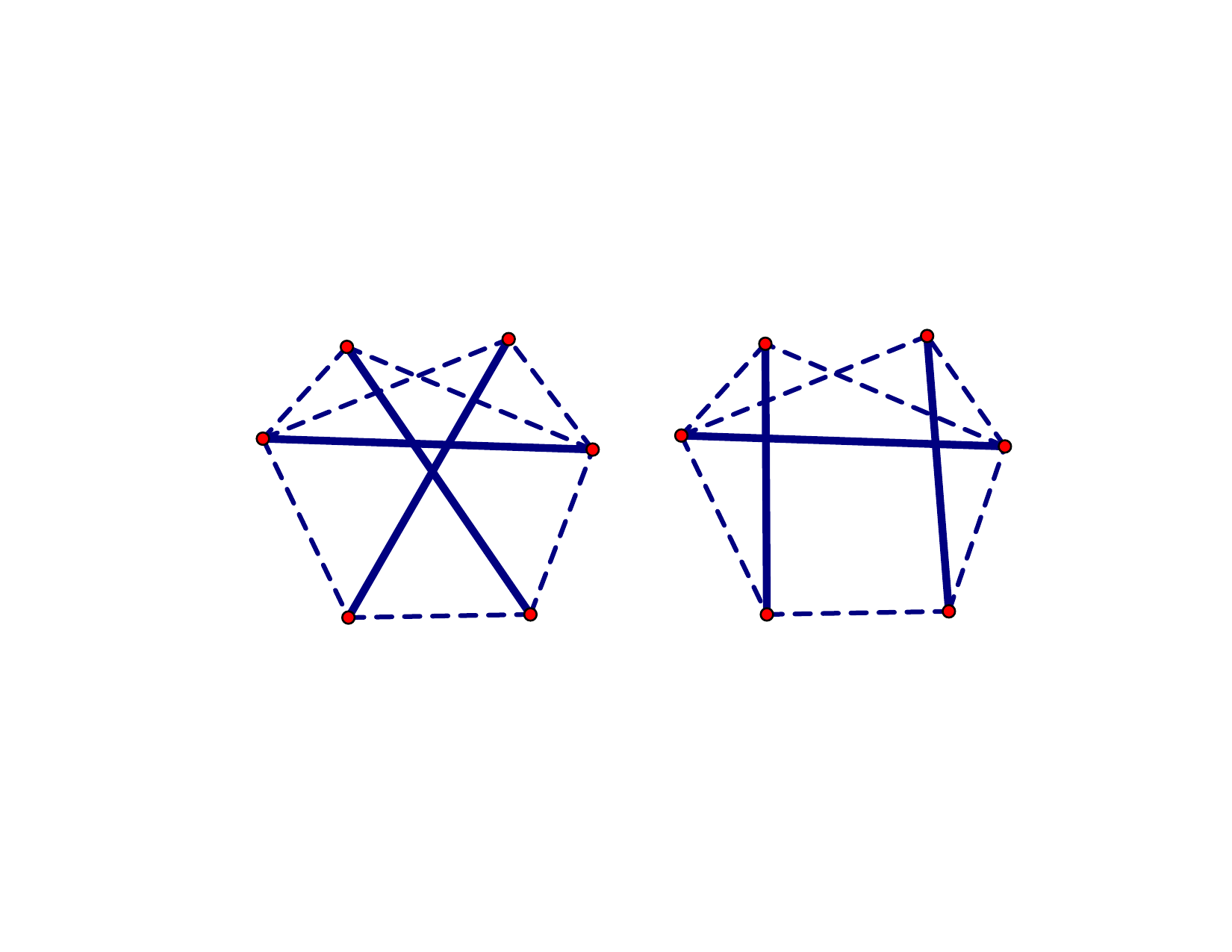}
\captionsetup{labelsep=colon,margin=1.3cm}
\caption{These frameworks are always super stable, via superposition, when their vertices are in a strictly convex configuration.  The top part of Figure \ref{fig:Flipped} shows the derivation for one of these frameworks.} \label{fig:dificient}
\end{figure}

\subsection{Braced polygonal circuits with 7 vertices}
As a concrete example and application of our observations, we examine all braced polygonal circuits
with $7$ vertices.
Figures~\ref{fig:proper},~\ref{fig:super}, and~\ref{fig:convexly} show the examples of all braced polygonal circuits
with $7$ vertices, collected into the categories of being always super stable with a proper stress (Figure \ref{fig:proper}), always super stable, but with some configurations that have positive stresses for internal braces (Figure \ref{fig:super}), and those that are not globally rigid in the plane, but are convexly rigid in the plane (Figure \ref{fig:convexly}).
The lists are constructed based on Theorem~\ref{thm:minimally}, Theorem~\ref{m3p}, and the superposition technique.

\begin{figure}[t]
\centering
\includegraphics[scale=0.45]{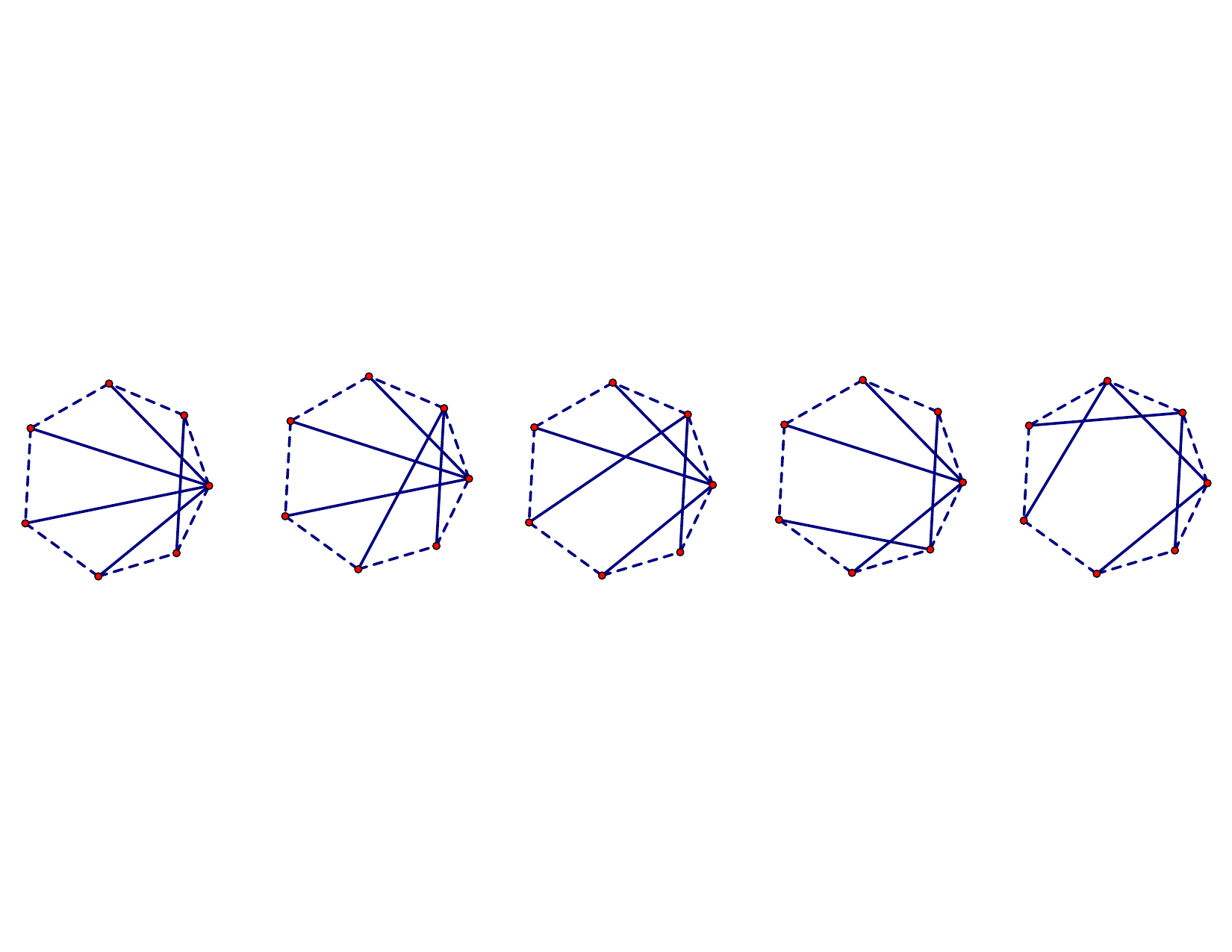}
\captionsetup{labelsep=colon,margin=1.3cm}
\caption{These are all the braced heptagonal circuits with a proper stress
in all strictly convex configurations.  See Theorem \ref{m3p}.} 
\label{fig:proper}
\end{figure}

\begin{figure}[p]
\centering
\includegraphics[scale=0.7]{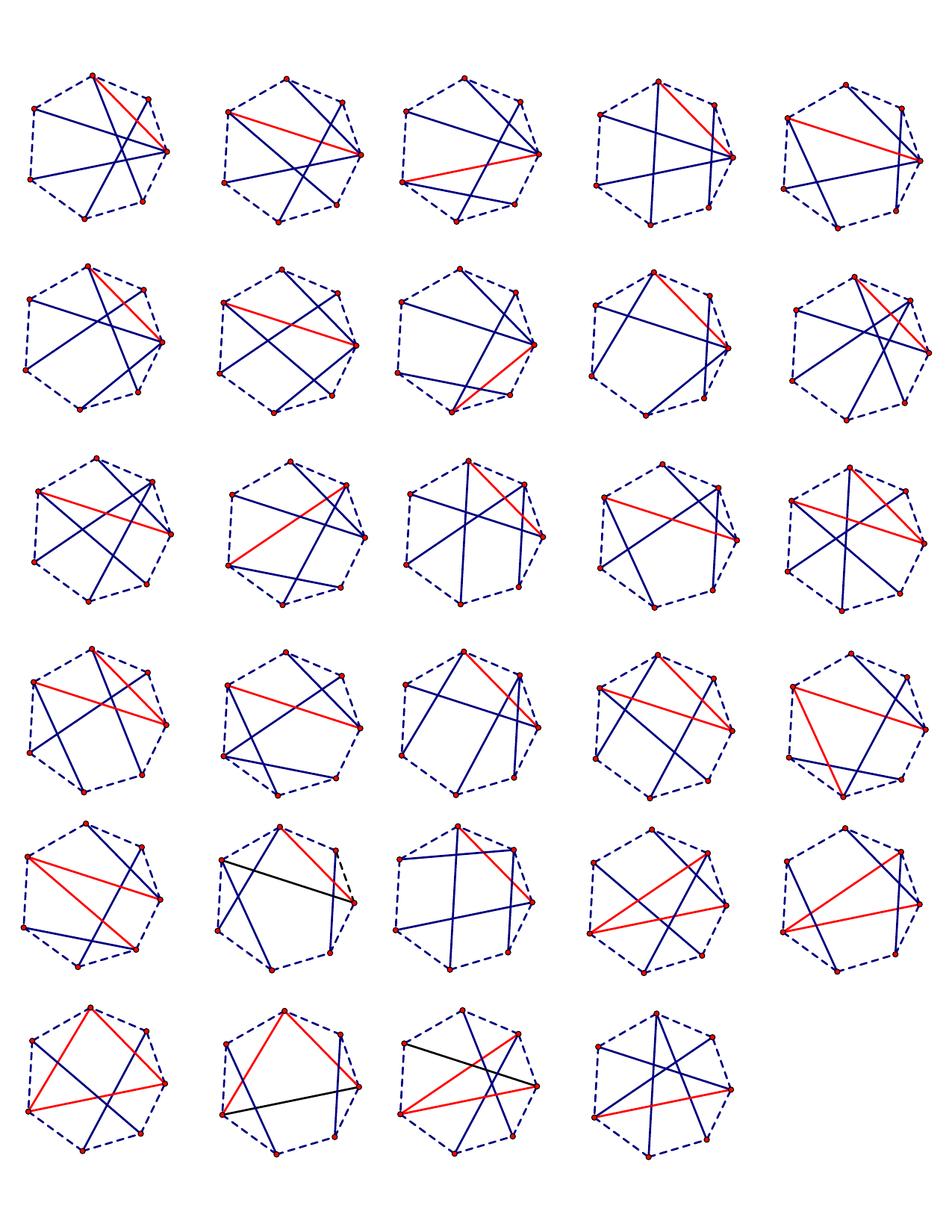}
\captionsetup{labelsep=colon,margin=1.3cm}
\caption{These are all the braced heptagonal circuits which are  super stable in all strictly convex configurations, but where some of the braces of the configurations may have a positive stress. 
} 
\label{fig:super}
\end{figure}

\begin{figure}[t]
\centering\includegraphics[scale=0.6]{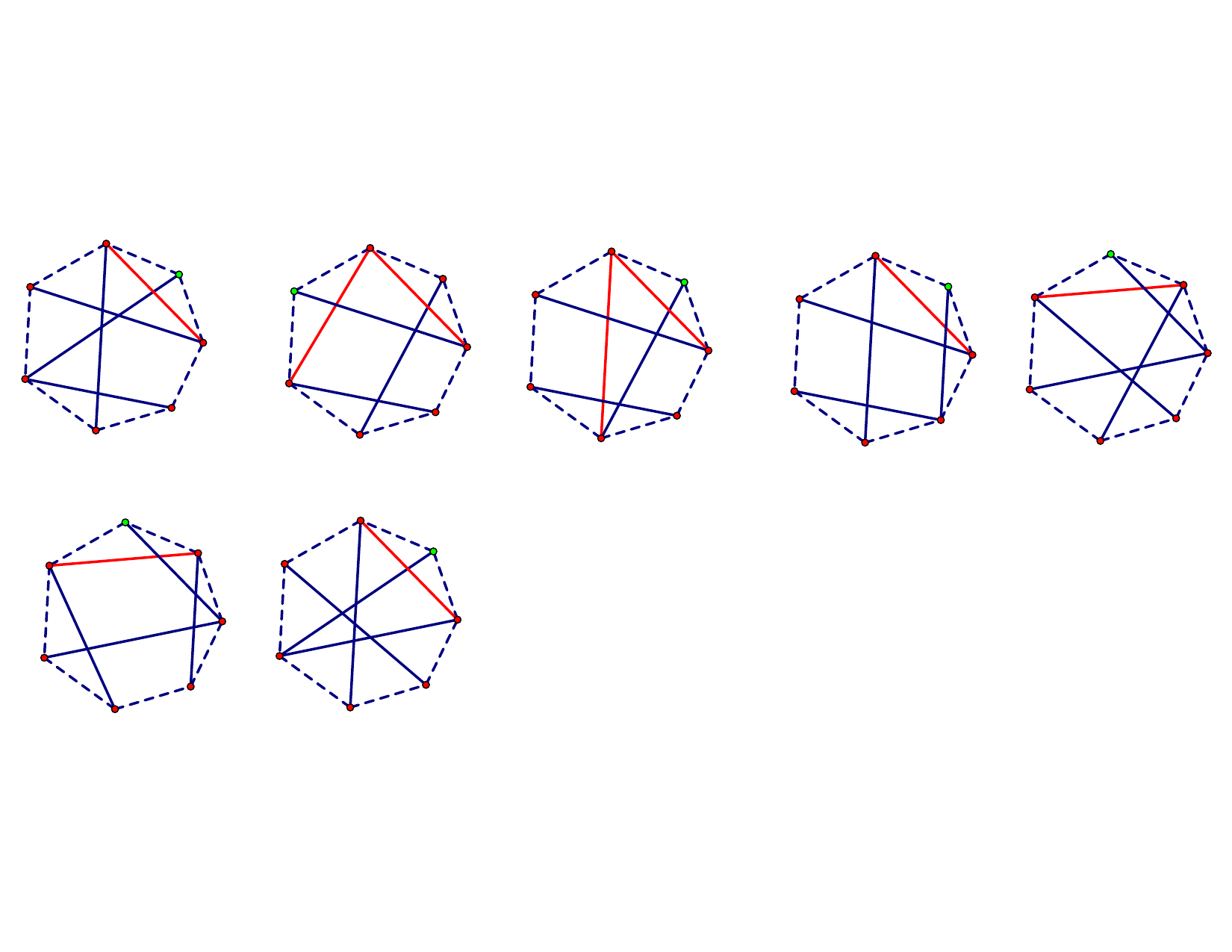}
\captionsetup{labelsep=colon,margin=1.3cm}
\caption{These are all the braced heptagonal circuits, which are convexly rigid  but are not globally rigid at all strictly convex plane configurations. The blue vertices may have realizations in the plane, where they are inside the convex hull of other vertices and, in that case, the configuration is not globally rigid in the plane.  The idea is to move the green vertices to the interior of the corresponding red edge, and then there is a $2$-dimensional space of stresses, indicating a non-trivial infinitesimal motion, such that one of the implied equivalent but not congruent configurations is convex and the other is not. See Figure \ref{fig:non-global}, which is related to the second framework here. } 
\label{fig:convexly}
\end{figure}

The red edges in Figure \ref{fig:super} are defined as those internal braces that do not  a negative stress in all strictly convex configurations.  It is necessary, in that case, that both end-vertices of the edge have degree at least four but this is not always sufficient. Indeed, in the last two cases in the second column, there is an edge that has both end-vertices of degree four which is not colored red.  In order for a stress on an edge to remain negative for all convex configurations, 
from Theorem~\ref{stressExist},
it is necessary and sufficient, that, when the edge is removed, the graph should fail to be $3$-connected.

\subsection{Braced polygonal circuits with 8 vertices}
In Figure~\ref{fig:proper-8}, we give a list of braced polygonal circuits with 8 vertices satisfying the condition of Theorem~\ref{m3p}.
For all strictly convex braced polygons with seven or fewer vertices, we can decide whether there is an example that is not globally rigid in the plane as a bar framework, or in all cases it is super stable.  However, we have found some strictly convex cases with $8$ vertices,  where we know that the framework is not super stable, and by Theorem $12.1$ in \cite{iterative} not even universally rigid in some strictly convex cases. But we cannot decide whether they are globally rigid or not for the non-universally rigid strictly convex cases in the plane. 
A notable example is the underlying graph of Figure \ref{fig:Grunbaum}, which is the orthogonal projection of a 3-dimensional framework given by Gr{\"u}nbaum~\cite{Grunbaum-book} onto the plane.
 It appears that all the strictly convex plane configurations of this graph  are  globally rigid, but we have no proof. 

\begin{figure}[t]
\centering
\includegraphics[scale=0.6]{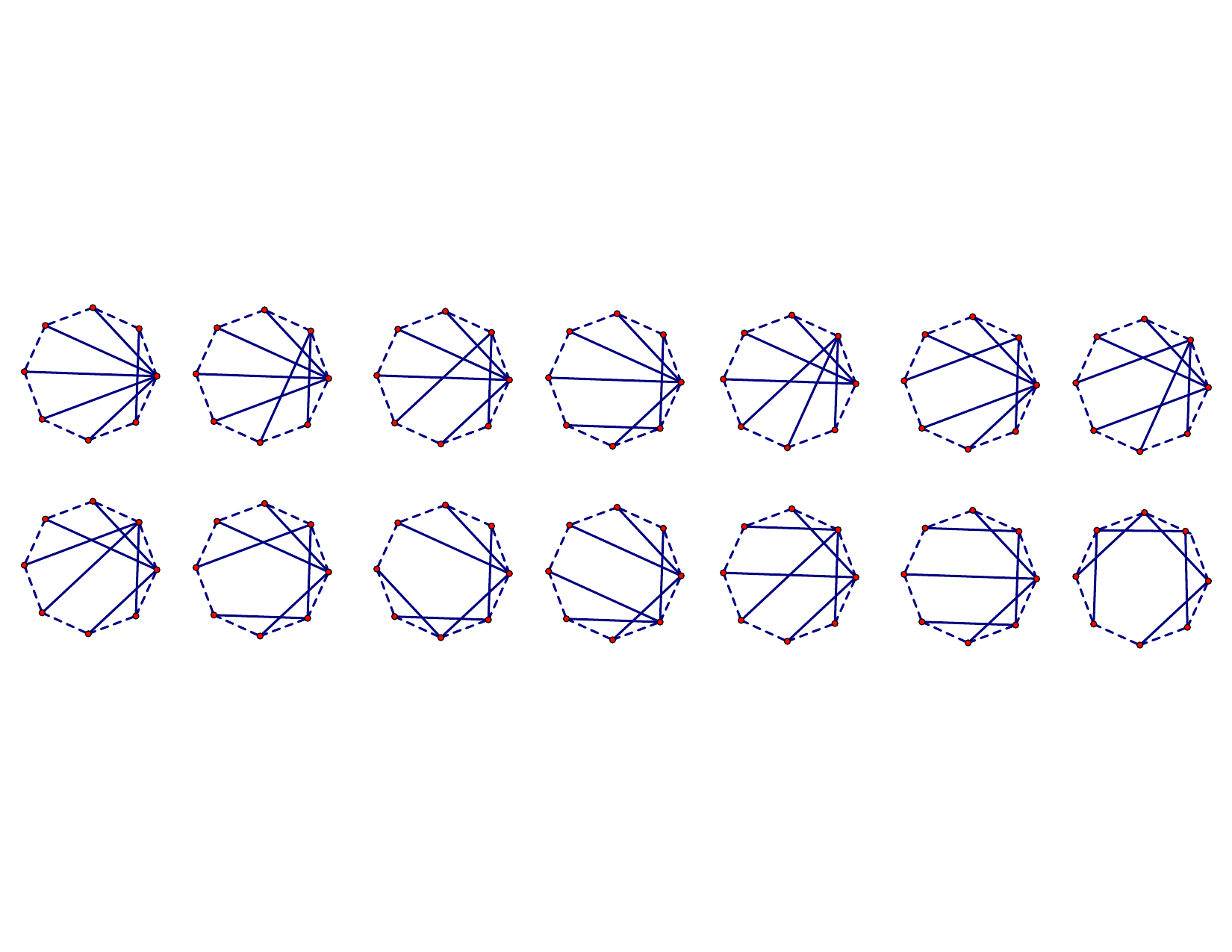}
\captionsetup{labelsep=colon,margin=1.3cm}
\caption{These are all the braced polygonal circuits with 8 vertices having  a proper stress in all strictly convex configurations.  See Theorem \ref{m3p}.} 
\label{fig:proper-8}
\end{figure}

In this graph,
the vertex sets $\{1,4,5,8\}$ and $\{2,3,6,7\}$ each induces an infinitesimally rigid framework in any strictly convex configuration in the plane, since they are constructed of two triangles with a common edge.  If there is another configuration of these (infinitesimally) rigid subsets of the configuration, then the other non edges $15$ and $37$ can only decrease in length.  If either one of these lengths does not decrease in length, then the bottom row of Figure \ref{fig:Flipped} shows that with this added bar, the graph would be universally rigid at all strictly convex configurations.  Nevertheless, with this added information, we still don't see how to decide whether all the strictly convex configurations of the original graph are globally rigid.


\begin{figure}[t]
    \centering
    \includegraphics[scale=0.5]{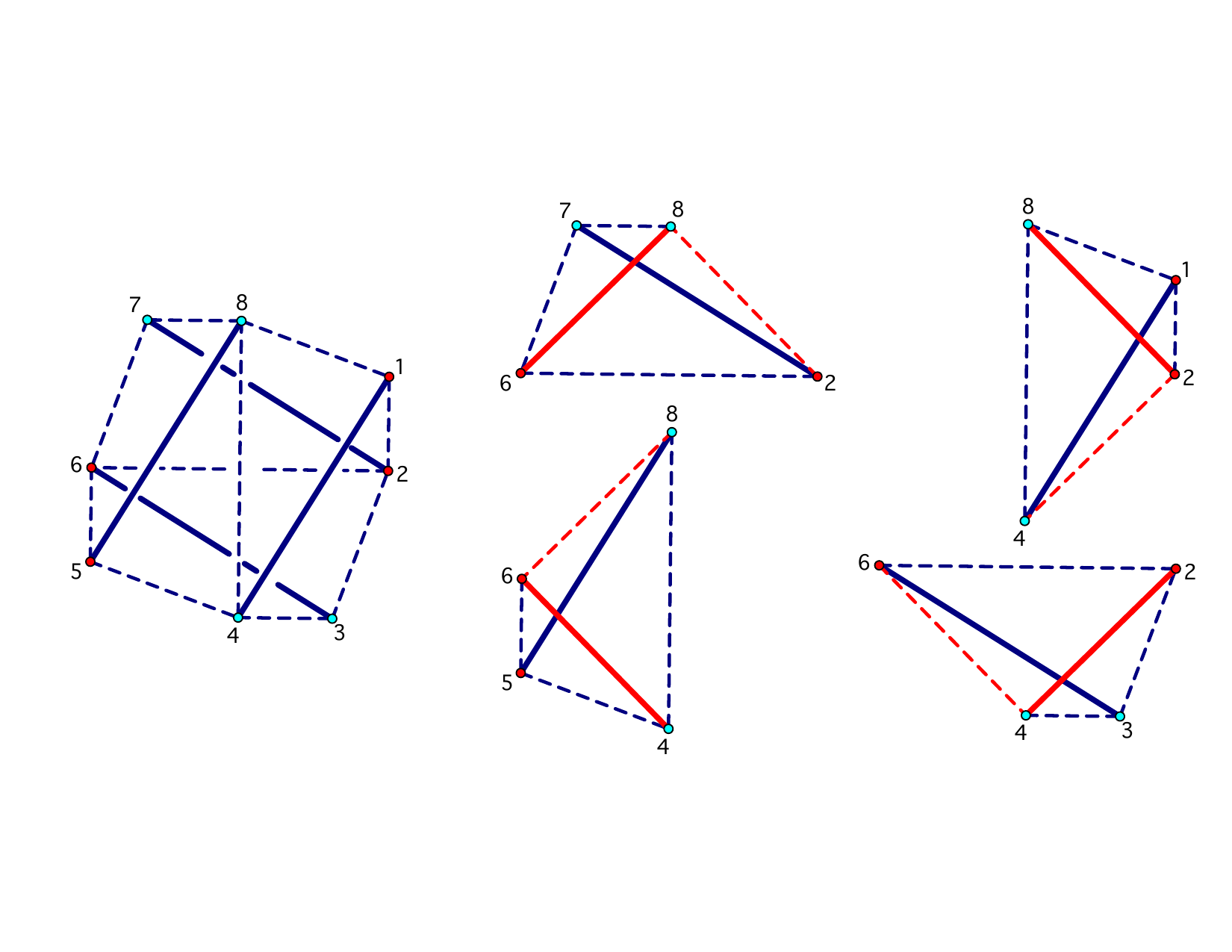}
    \captionsetup{labelsep=colon,margin=1.3cm}
    \caption{
    The left framework is the orthogonal projection of a $3$-dimensional  framework 
    from Gr{\"u}nbaum and Shephard's ``Lectures in Lost Mathematics"~\cite{Grunbaum-book} onto the plane. 
    It has a 4-fold rotational symmetry and edge 78 is parallel to edge 26. 
    It is a superposition of the four planar quadrilaterals shown on the right. The original 3-dimensional framework is known to be super stable, see  \cite[Section 11.7]{book}, but its projection onto the plane is only universally rigid. We do not know whether all the strictly convex plane realisations of its underlying graph are globally rigid.}\label{fig:Grunbaum}
\end{figure}

\begin{figure}[t]
\centering
\includegraphics[scale=0.5]{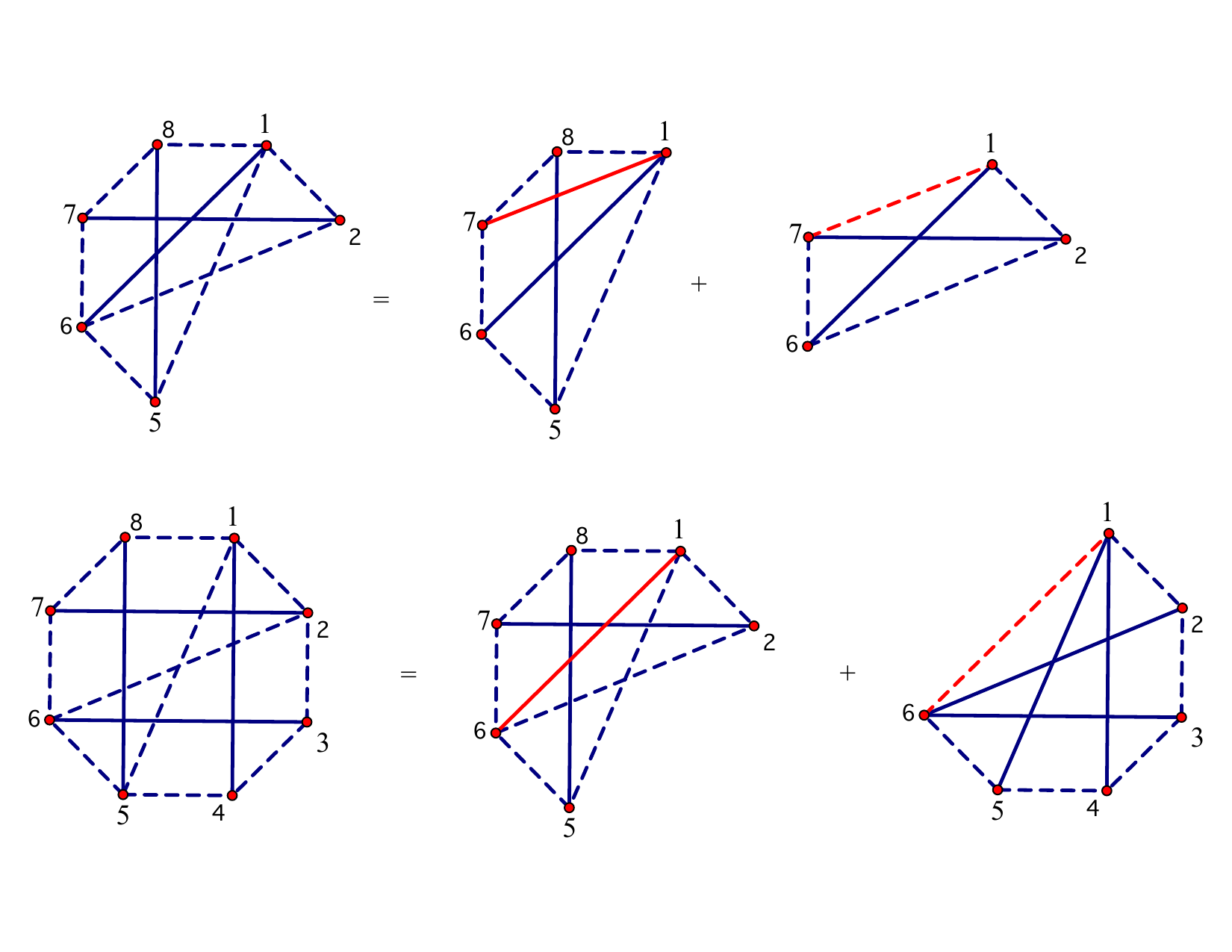}
\captionsetup{labelsep=colon,margin=1.3cm}
\caption{These graphs are always super stable, via superposition, when their vertices are in a strictly convex configuration.   Here the red edges are the ones that always cancel. Notice that the bottom graph  is the same as the graph of the Gr\"unbaum framework of Figure  \ref{fig:Grunbaum}, except that the $84$ and $51$ edges have been switched in the plane projection. } \label{fig:Flipped}
\end{figure}

   Figure \ref{fig:visual} shows that there is a region of the space of convex configurations of the graph of the Gr\"unbaum framework that are not universally rigid, and other regions that are super stable, assuming a rotational symmetry of 90 degrees. The red region corresponds to the positon of vertex $7$, where the configuration is not universally rigid, but the polygon is still strictly convex. It turns out that when the point $7$ is below the $2,6$ line in Figure \ref{fig:visual}, the configuration is not universally rigid.

\begin{figure}[p]
\centering
\includegraphics[scale=0.7]{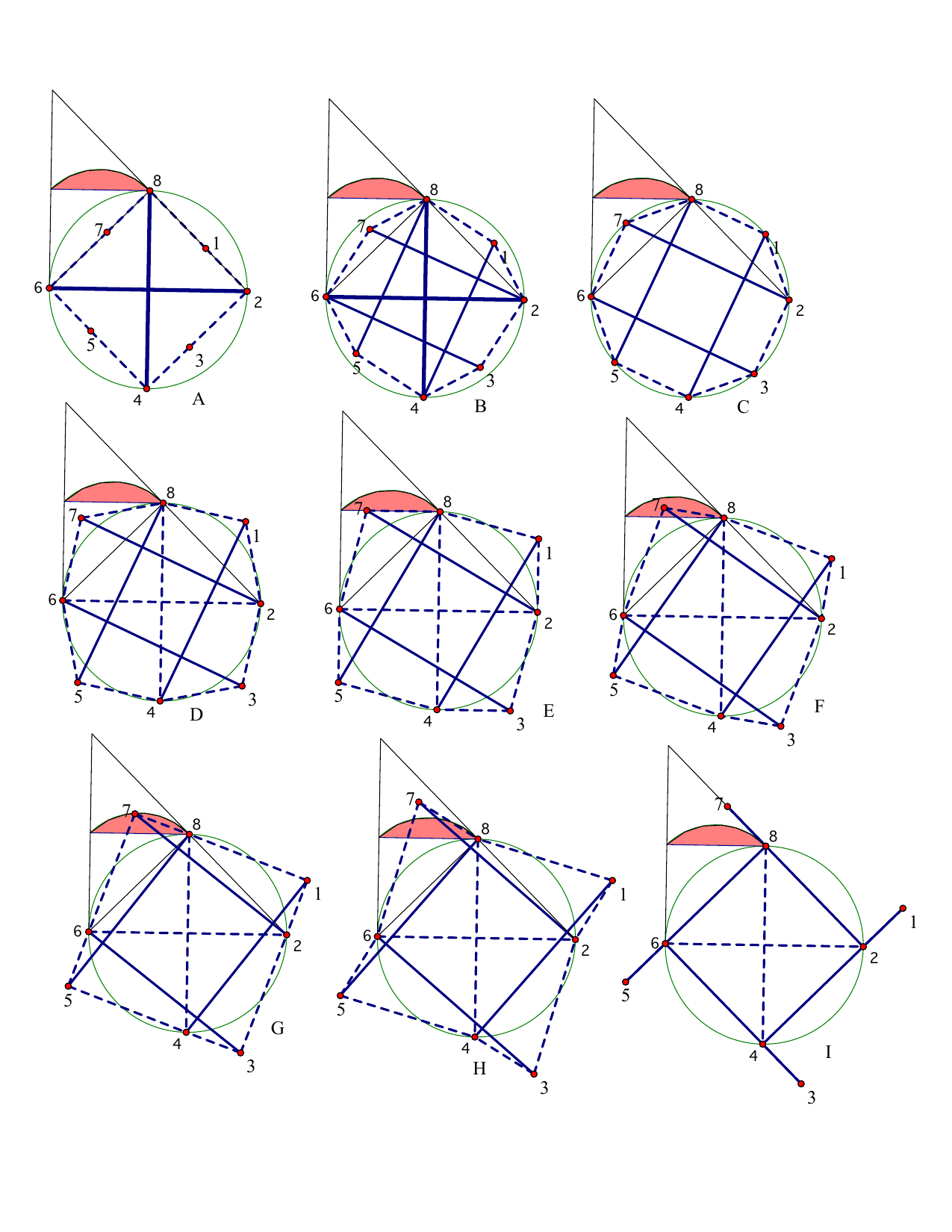}
\captionsetup{labelsep=colon,margin=1.3cm}
\caption{These frameworks are various plane configurations of the 
braced polygon graph given in Figure \ref{fig:Grunbaum}. 
The edges that have $0$ stress are omitted.
 The $0$ stressed bars still contribute to the bar framework's infinitesimal rigidity, and they are all infinitesimally rigid as bar frameworks if we restore the omitted edges. 
 See Table~\ref{table} for more detailed analysis of each case.
 } 
\label{fig:visual}
\end{figure}

\begin{table}[t]
\begin{tabular}{ |l|l| } 
 \hline
 A & This essentially is a square with two braces as diagonals, and so it is super stable.  \\ 
 \hline
 B & This continues to be super stable, but $\omega_{62}<0$  
 \\ 
 \hline
 C & Point $7$ and all the other points lie on the same circle and so there is a \\
  & reflection symmetry in the whole configuration that has proper stress with \\
  & $\omega_{84}=\omega_{62}=0$ and all other stresses on the internal edges negative.\\
 \hline
 D & These configurations must be super stable until Point $7$ crosses the horizontal line,\\
  & and $\omega_{84}=\omega_{62}>0$ by looking ahead to configurations E.  \\
 \hline
 E & This is the Gr\"unbaum framework in Figure \ref{fig:Grunbaum}, \\
  & so the stress matrix $\Omega$ is PSD with only one extra $0$ eigenvalue. \\
 \hline
 F & The circular arc of the red region is part of a circle with center at the midpoint\\
  &  between the points $6$ and $8$. So the angle at Point $7$ is at most a right angle and the\\
  &   configuration is convex. Looking ahead to the configuration I, the $0$ eigenvalue became negative. \\
 \hline
 G & None of the eigenvalues or stress signs change, but the region is not strictly convex \\
 \hline
 H & Now the region is not convex, and none of the eigenvalues or stress signs have changed \\
 \hline
 I & $\omega_{67}=0$ and so the value of the stress energy is the negative of the stress\\ 
  & energy of a square with diagonals as braces.\\
 \hline
\end{tabular}
\caption{Case Analysis from Figure \ref{fig:visual}.}
\label{table}
\end{table}

Another simple way to create examples of universally rigid frameworks is to adjoin a sequence of universally rigid ``plates" together with a brace from the first to last as in Figure \ref{fig:plates}.
One of the problems with applying this idea more generally is that each of the plates needs to be universally rigid, at least as a bar framework.  If there is an open set of configurations that is universally rigid, except for a triangle in the plane, if more than one of the plates is not a triangle, it will have a non-zero stress, and then the union will have a $2$-dimensional set of stresses.  We are treating the case of circuits, here, which have only a one-dimensional space of stresses.  The last case here in Figure \ref{fig:plates} is the next to last example in the first column, in the table of Figure \ref{fig:super}.

\begin{figure}[t]
\centering
\includegraphics[scale=0.5]{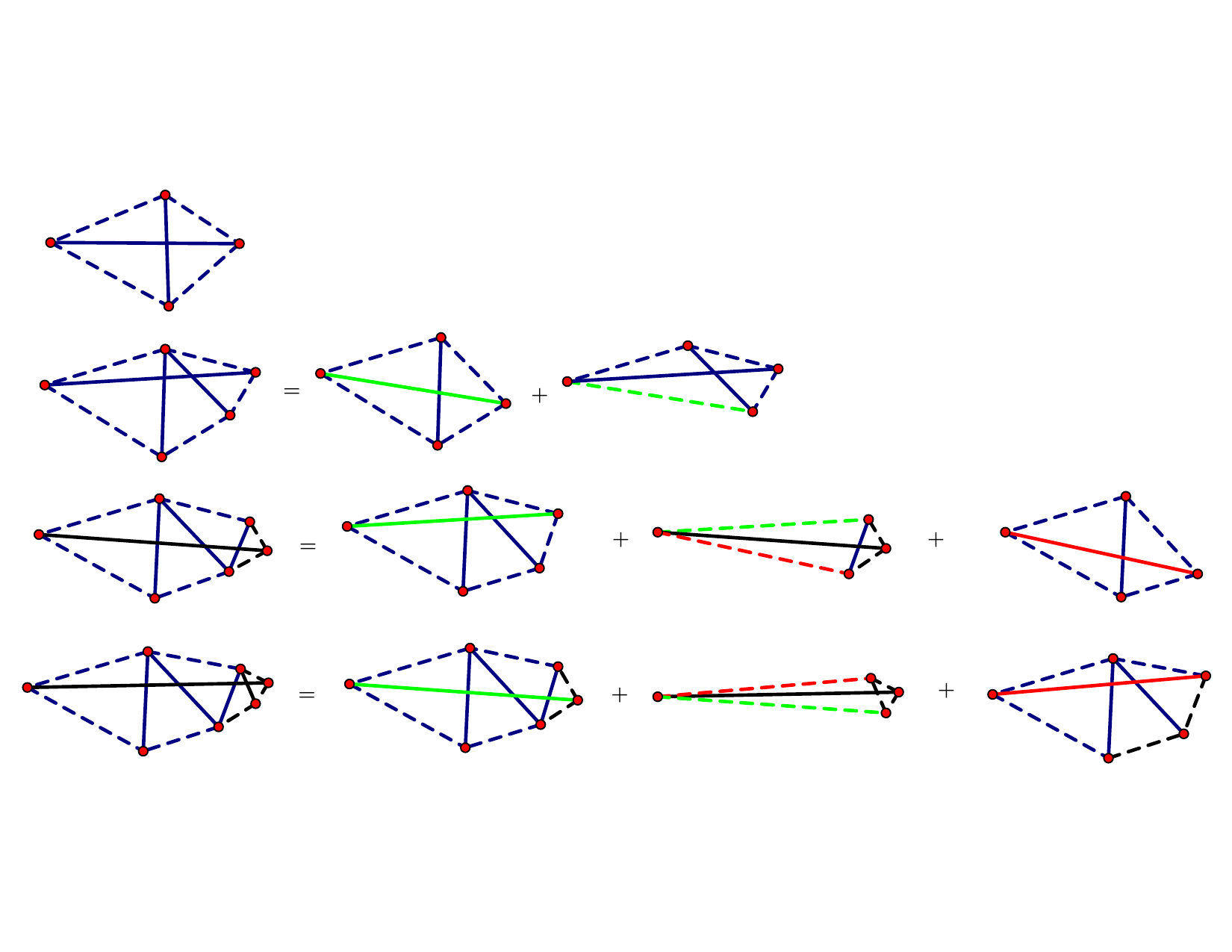}
\captionsetup{labelsep=colon,margin=1.3cm}
\caption{The frameworks on the left
are made of rigid triangles placed edge-to-edge with a brace connecting the first to last going through the interior of all the triangles, and thus are easily seen to be universally rigid, just by the triangle inequality. This shows how to construct such a framework as a superposition of simpler frameworks.  The red and green members, when superimposed, have their stresses cancel.} 
\label{fig:plates}
\end{figure}

\begin{question}
    Is there a strictly convex configuration of the braced polygon graph in Figure \ref{fig:Grunbaum} that is not globally rigid? 
\end{question}

    Note that this graph is infinitesimally rigid at all strictly convex configurations and thus the stress matrix (and its eigenvalues) vary continuously as the configuration varies continuously in the plane.
    Using this fact, we can look for  non-universally rigid configurations in the plane close to that given in Figure~\ref{fig:Grunbaum}. 
      The configuration in Figure~\ref{fig:Grunbaum} is universally rigid (this can be proved using a careful analysis of the superposition technique), but not super stable (because 
      there is a 3-dimensional framework having the same one-dimensional space of stresses).
      However, there are small perturbations of the configuration in the plane, where the stress matrix picks up a negative eigenvalue, and as a result, we loose universal rigidity.  A random search among strictly convex configurations for this graph in the plane is unlikely to find these non-universally rigid configurations, even though they form an open subset of such configurations.
    

\subsection{Further questions for braced polygonal circuits}
The central question, possibly wishful thinking, is whether there is a way to determine global rigidity for braced polygonal circuits, by the methods presented here. We have a way to tell when such a circuit is NOT globally rigid by Theorem \ref{convex}, such as the example in Figure \ref{fig:non-global}. Minimally 3-connected cases can be solved using Theorem \ref{m3p}, such as those in Figure \ref{fig:proper}. When neither of the theorems apply, our best hope is to use the superposition technique in Figure \ref{fig:superposition}. This technique works for most of the cases that we have tried. There are still open cases such as Figure \ref{fig:Grunbaum}. This leads to the following:

\begin{question}\label{boundary-question} If a braced polygonal circuit $G$ has a  strictly convex plane realisation $(G,\p)$ which is not globally rigid, is it true that there is a convex, but possibly not strictly convex, realisation $(G,\q)$ with a non-trivial infinitesimal flex $\q'$ such that $\p = \q+\q'$?  
\end{question}                 
We are asking if every non~globally rigid, strictly convex plane realisation of a braced polygonal circuit can be constructed as in Figures \ref{fig:Jackson} and \ref{fig:non-global}.
If this is true, then the braced octagonal graph  of Gr\"unbaum, Figure \ref{fig:Grunbaum} 
would  be globally rigid at all strictly convex configurations. 

\begin{question}\label{numbers}
We saw in Figures \ref{fig:super} and \ref{fig:proper} that the number of braced polygonal circuits that are  super stable in all strictly convex configurations is significantly larger that the number which have a proper stress in all strictly convex configurations.   Does this persist with larger numbers of vertices? What is the asymptotic behavior as the number of vertices increases? 
\end{question}
If the number of vertices $n$ is large, it is likely that the situation in Figure \ref{fig:Jackson} will be the dominant case. There are more ways for subgraphs to be stressed in a convex shape with large $n$. If two subgraphs can be stressed at the same time, then the configuration will have two stresses. As a result, the configuration will not even be convexly rigid. If we only look at convexly rigid  braced polygonal circuits with $n$ vertices, then is it possible to estimate how many of them are globally rigid, universally rigid, or properly stressed at all strictly convex configurations?

\subsection{Further questions for general convex braced polygons}
For a specific configuration, global rigidity is known to be NP-hard, while infinitesimal rigidity can be determined by computing the null space of a matrix. According to Theorem \ref{convex}, these two are identical in the set of strictly convex configurations for braced polygon graphs. 
 A question is whether this fact can be used to design an efficient algorithm.
\begin{question} \label{inf-rigid} For a braced polygon graph $G$ that is generically rigid in the plane, is there a reasonable polynomial-time algorithm to determine when all strictly convex braced polygon of $G$ are infinitesimally rigid in the plane?
\end{question} 
For example, one way to construct an example of a convex braced polygon that is infinitesimally rigid in all the strictly convex configurations is to build from a triangle inductively by connecting a new vertex of degree $2$ to the previous vertices, a Type I Henneberg operation. However, this procedure does not generate all such examples, as demonstrated in Figure \ref{fig:convex-rigid}.

\begin{figure}[t]
\centering
\includegraphics[scale=0.3]{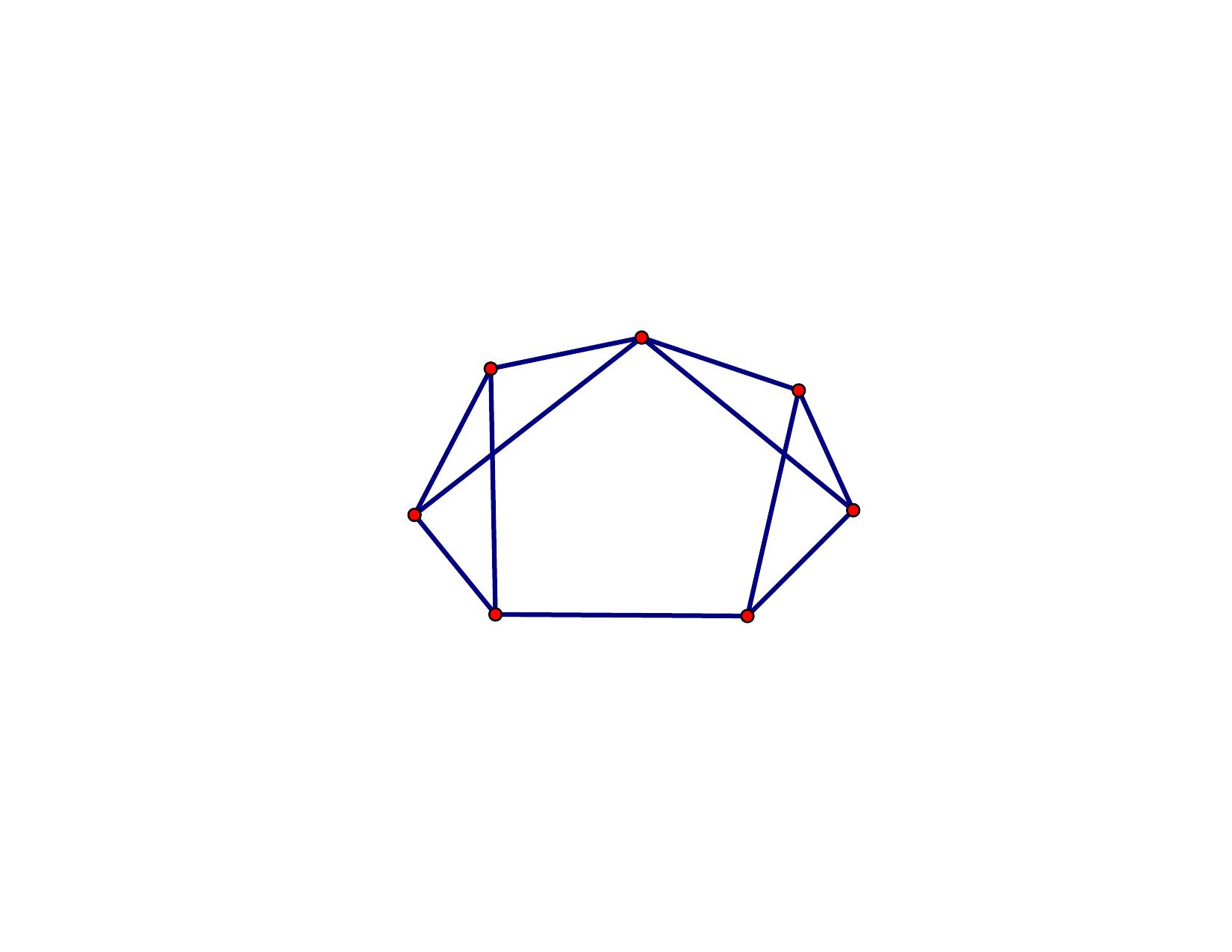}
\captionsetup{labelsep=colon,margin=1.3cm}
\caption{This shows a strictly convex braced polygon that is infinitesimally rigid in all strictly convex configurations, but it is not obtained just from Henneberg Type I operations. } \label{fig:convex-rigid}
\end{figure}

\begin{question} Is there an analogue of Theorem \ref{convex} for $\mathbb{R}^3$ (or in higher dimensions)? Namely if all the vertices of two bar frameworks in $\mathbb{R}^3$, each of whose vertices lie on their convex hull, when is there a rotation of one of them such that their average is convex. 
\end{question}

When the framework is the one skeleton of a strictly convex triangulated sphere in $\mathbb{R}^3$ it is known \cite{book} that framework is convexly rigid by Cauchy's theorem and infinitesimally rigid by Dehn's theorem. On the other hand, there are examples of non-convex triangulated surfaces, whose vertices lie on their convex hull (weakly convex) that are always infinitesimally rigid. See \cite{Schlenker,Schlenker-Connelly}. Are they convexly rigid?


\begin{question}\label{flexible}
Finally, rigidity cannot be done without asking the most obvious question:
    is there a braced polygon that is generically rigid but flexible? 
\end{question}
    Without convexity, it is possible to place all edges of Figure \ref{fig:Grunbaum} on a rectangular grid for flexibility. With convexity, it is not obvious. Our intuition says this should not be possible just like in Cauchy's Rigidity Theorem. Theorem \ref{convex} shows that strictly convex configurations of generically rigid braced polygon graphs are closely related to convex polytopes. Grasegger and Legersky's idea on NAC coloring\cite{Grasegger2017GraphsWF} can assert the existence of flexible configurations, yet convexity rules out the affine motions they use to construct these configurations.

\newpage
\bibliographystyle{abbrvnat}
\bibliography{braced.bib}

\begin{thebibliography}{24}
\providecommand{\natexlab}[1]{#1}
\providecommand{\url}[1]{\texttt{#1}}
\expandafter\ifx\csname urlstyle\endcsname\relax
  \providecommand{\doi}[1]{doi: #1}\else
  \providecommand{\doi}{doi: \begingroup \urlstyle{rm}\Url}\fi

\bibitem[Connelly(1982)]{polygon}
R.~Connelly.
\newblock Rigidity and energy.
\newblock \emph{Invent. Math.}, 66\penalty0 (1):\penalty0 11--33, 1982.
\newblock ISSN 0020-9910.
\newblock \doi{10.1007/BF01404753}.
\newblock URL
  \url{https://doi-org.proxy.library.cornell.edu/10.1007/BF01404753}.

\bibitem[Connelly and Gortler(2015)]{iterative}
R.~Connelly and S.~J. Gortler.
\newblock Iterative universal rigidity.
\newblock \emph{Discrete Comput. Geom.}, 53\penalty0 (4):\penalty0 847--877,
  2015.
\newblock ISSN 0179-5376,1432-0444.
\newblock \doi{10.1007/s00454-015-9670-5}.
\newblock URL \url{https://doi.org/10.1007/s00454-015-9670-5}.

\bibitem[Connelly and Guest(2022)]{book}
R.~Connelly and S.~D. Guest.
\newblock \emph{Frameworks, tensegrities, and symmetry}.
\newblock Cambridge University Press, Cambridge, 2022.
\newblock ISBN 978-0-521-87910-1.
\newblock \doi{10.1017/9780511843297}.
\newblock URL
  \url{https://doi-org.proxy.library.cornell.edu/10.1017/9780511843297}.

\bibitem[Connelly and Schlenker(2010)]{Schlenker-Connelly}
R.~Connelly and J.-M. Schlenker.
\newblock On the infinitesimal rigidity of weakly convex polyhedra.
\newblock \emph{European J. Combin.}, 31\penalty0 (4):\penalty0 1080--1090,
  2010.
\newblock ISSN 0195-6698.
\newblock \doi{10.1016/j.ejc.2009.09.006}.
\newblock URL
  \url{https://doi-org.proxy.library.cornell.edu/10.1016/j.ejc.2009.09.006}.

\bibitem[Connelly and Whiteley(2010)]{averaging}
R.~Connelly and W.~J. Whiteley.
\newblock Global rigidity: the effect of coning.
\newblock \emph{Discrete \& Computational Geometry}, 43\penalty0 (4):\penalty0
  717--735, 2010.

\bibitem[Dehn(1916)]{Dehn}
M.~Dehn.
\newblock \"{U}ber die {S}tarrheit konvexer {P}olyeder.
\newblock \emph{Math. Ann.}, 77\penalty0 (4):\penalty0 466--473, 1916.
\newblock ISSN 0025-5831,1432-1807.
\newblock \doi{10.1007/BF01456962}.
\newblock URL \url{https://doi.org/10.1007/BF01456962}.

\bibitem[Fogelsanger(1988)]{Fogelsanger}
A.~L. Fogelsanger.
\newblock \emph{The generic rigidity of minimal cycles}.
\newblock ProQuest LLC, Ann Arbor, MI, 1988.
\newblock URL
  \url{http://gateway.proquest.com/openurl?url_ver=Z39.88-2004&rft_val_fmt=info:ofi/fmt:kev:mtx:dissertation&res_dat=xri:pqdiss&rft_dat=xri:pqdiss:8821159}.
\newblock Thesis (Ph.D.)--Cornell University.

\bibitem[Geleji and Jord{\'{a}}n(2013)]{robust}
J.~Geleji and T.~Jord{\'{a}}n.
\newblock Robust tensegrity polygons.
\newblock \emph{Discrete \& Computational Geometry}, 50\penalty0 (3):\penalty0
  537--551, Aug. 2013.
\newblock \doi{10.1007/s00454-013-9539-4}.
\newblock URL \url{https://doi.org/10.1007/s00454-013-9539-4}.

\bibitem[Gluck(1975)]{Gluck}
H.~Gluck.
\newblock Almost all simply connected closed surfaces are rigid.
\newblock In \emph{Geometric topology ({P}roc. {C}onf., {P}ark {C}ity, {U}tah,
  1974)}, volume Vol. 438 of \emph{Lecture Notes in Math.}, pages 225--239.
  Springer, Berlin-New York, 1975.

\bibitem[Grasegger et~al.(2017)Grasegger, Legersk{\'y}, and
  Schicho]{Grasegger2017GraphsWF}
G.~Grasegger, J.~Legersk{\'y}, and J.~Schicho.
\newblock Graphs with flexible labelings.
\newblock \emph{Discrete \& Computational Geometry}, pages 1--20, 2017.
\newblock URL \url{https://api.semanticscholar.org/CorpusID:119575764}.

\bibitem[Gruber and Wills(1993)]{Handbook}
P.~M. Gruber and J.~M. Wills, editors.
\newblock \emph{Handbook of convex geometry. {V}ol. {A}, {B}}.
\newblock North-Holland Publishing Co., Amsterdam, 1993.
\newblock ISBN 0-444-89598-1.

\bibitem[Gr{\"u}nbaum and Shephard(1975)]{Grunbaum-book}
B.~Gr{\"u}nbaum and G.~C. Shephard.
\newblock \emph{Lectures on lost mathematics}.
\newblock 1975.
\newblock URL
  \url{https://digital.lib.washington.edu/researchworks/handle/1773/15700.61,263}.

\bibitem[Heaton and Timme(2022)]{Catastrophe}
A.~Heaton and S.~Timme.
\newblock Catastrophe in elastic tensegrity frameworks.
\newblock \emph{Arnold Math. J.}, 8\penalty0 (3-4):\penalty0 423--443, 2022.
\newblock ISSN 2199-6792,2199-6806.
\newblock \doi{10.1007/s40598-021-00193-9}.
\newblock URL \url{https://doi.org/10.1007/s40598-021-00193-9}.

\bibitem[Hendrickson(1992)]{Global}
B.~Hendrickson.
\newblock Conditions for unique graph realizations.
\newblock \emph{SIAM J. Comput.}, 21\penalty0 (1):\penalty0 65--84, 1992.
\newblock ISSN 0097-5397.
\newblock \doi{10.1137/0221008}.
\newblock URL \url{https://doi.org/10.1137/0221008}.

\bibitem[Izmestiev and Schlenker(2010)]{Schlenker}
I.~Izmestiev and J.-M. Schlenker.
\newblock Infinitesimal rigidity of polyhedra with vertices in convex position.
\newblock \emph{Pacific J. Math.}, 248\penalty0 (1):\penalty0 171--190, 2010.
\newblock ISSN 0030-8730.
\newblock \doi{10.2140/pjm.2010.248.171}.
\newblock URL
  \url{https://doi-org.proxy.library.cornell.edu/10.2140/pjm.2010.248.171}.

\bibitem[Jacobs and Hendrickson(1997)]{Hendrickson}
D.~J. Jacobs and B.~Hendrickson.
\newblock An algorithm for two-dimensional rigidity percolation: The pebble
  game.
\newblock \emph{JOURNAL OF COMPUTATIONAL PHYSICS}, 13753:\penalty0 346--365,
  1997.

\bibitem[Laman(1970)]{Laman}
G.~Laman.
\newblock On graphs and rigidity of plane skeletal structures.
\newblock \emph{J. Engrg. Math.}, 4:\penalty0 331--340, 1970.
\newblock ISSN 0022-0833,1573-2703.
\newblock \doi{10.1007/BF01534980}.
\newblock URL \url{https://doi.org/10.1007/BF01534980}.

\bibitem[Lov\'{a}sz and Yemini(1982)]{Lovasz}
L.~Lov\'{a}sz and Y.~Yemini.
\newblock On generic rigidity in the plane.
\newblock \emph{SIAM J. Algebraic Discrete Methods}, 3\penalty0 (1):\penalty0
  91--98, 1982.
\newblock ISSN 0196-5212.
\newblock \doi{10.1137/0603009}.
\newblock URL \url{https://doi.org/10.1137/0603009}.

\bibitem[Nixon et~al.(2021)Nixon, Schulze, and Whiteley]{projective}
A.~Nixon, B.~Schulze, and W.~Whiteley.
\newblock Rigidity through a projective lens.
\newblock \emph{Applied Sciences}, 11\penalty0 (24):\penalty0 11946, 2021.

\bibitem[Oba and Tanigawa(2023)]{Oba}
R.~Oba and S.-i. Tanigawa.
\newblock Characterizing the universal rigidity of generic tensegrities.
\newblock \emph{Math. Program.}, 197\penalty0 (1):\penalty0 109--145, 2023.
\newblock ISSN 0025-5610,1436-4646.
\newblock \doi{10.1007/s10107-021-01730-2}.
\newblock URL \url{https://doi.org/10.1007/s10107-021-01730-2}.

\bibitem[Oba and Tanigawa(2024)]{OT}
R.~Oba and S.-i. Tanigawa.
\newblock Super stable tensegrities and colin de verdi\`{e}re number $\nu$.
\newblock \emph{Journal of Graph Theory}, \penalty0 (to appear), 2024.
\newblock \doi{https://doi.org/10.1002/jgt.23188}.

\bibitem[Pollaczek-Geiringer(1926)]{Geiringer}
H.~Pollaczek-Geiringer.
\newblock Beitrag zu den {F}undamentals\"{a}tzen der
  {W}ahrscheinlichkeitsrechnung.
\newblock \emph{Math. Z.}, 24\penalty0 (1):\penalty0 684--705, 1926.
\newblock ISSN 0025-5874,1432-1823.
\newblock \doi{10.1007/BF01216805}.
\newblock URL \url{https://doi.org/10.1007/BF01216805}.

\bibitem[Saliola and Whiteley(2004)]{Saliola-Whiteley}
F.~Saliola and W.~Whiteley.
\newblock Constraining plane configurations in {CAD}: circles, lines, and
  angles in the plane.
\newblock \emph{SIAM J. Discrete Math.}, 18\penalty0 (2):\penalty0 246--271,
  2004.
\newblock ISSN 0895-4801,1095-7146.
\newblock \doi{10.1137/S0895480100374138}.
\newblock URL \url{https://doi.org/10.1137/S0895480100374138}.

\bibitem[White and Whiteley(1983)]{pure-condition}
N.~L. White and W.~Whiteley.
\newblock The algebraic geometry of stresses in frameworks.
\newblock \emph{SIAM J. Algebraic Discrete Methods}, 4\penalty0 (4):\penalty0
  481--511, 1983.
\newblock ISSN 0196-5212.
\newblock \doi{10.1137/0604049}.
\newblock URL \url{https://doi.org/10.1137/0604049}.

\end{thebibliography}

\appendix

\section{Appendix: unique interval property}
In this appendix, we shall discuss a relation between minimal 3-connectivity and the unique interval property introduced by Geleji and Jord{\'a}n~\cite{robust}.

 Without loss of generality, we assume that the vertices are labeled counterclockwise on the boundary of the outside Hamiltonian circuit for braced polygons. 
\begin{defn}\label{interval}
    We say that a braced polygon $G$ satisfies the 
    \textit{unique interval property} \\\cite{robust} if \vspace{-\topsep}\begin{enumerate}
        \item there exists a connected segment of the outside polygon $I$ of vertices $\{v_1,...,v_k\}$ where $k\ge 1$ such that $\deg (v_i)\ge 4$ for $v_i\in I$ and $(v_{i-1},v_{i+1})$ is an edge, 
        \item $\deg(v_i)=3$ for $v_i\in V-I$ and no other edges exist between two vertices in $V-I$ except $(v_n,v_2)$ when $k=1$, and
        \item if two edges connecting $I$ to $V-I$ intersect in the interior of the polygon, then the two vertices in $I$ are connected by an edge of the outer polygon. 
    \end{enumerate}
\end{defn}

\begin{thm}\label{thm:unit_interval}
Let $G$ be a braced polygonal circuit.
Then $G$ is minimally 3-connected if and only if 
$G$ satisfies the unique interval property.
\end{thm}

For the proof, we introduce the idea of a dual polygon for minimally 3-connected braced polygons. 
\begin{defn}
For a brace $e$ in a minimally 3-connected braced polygon graph $G=(V,E)$, a \textit{dual brace} $e'$ is a pair of vertices that disconnect $(V,E-\{e\})$. A \textit{dual polygon graph} $G^*$ of $G$ is the polygon with each brace $e$ of $G$ replaced by a dual brace $e'$.
\end{defn}

Observe that in order for the dual brace to disconnect $(V,E-\{e\})$, each dual brace crosses exactly one brace. Therefore, no two braces can share the same dual brace. Since there are equal numbers of braces and dual braces, each brace crosses exactly one dual brace. Note also that for any given brace, there may be more than one way to choose a dual brace. An example of a convex braced polygon and its dual is given in figure \ref{fig:dualPoly}.

\begin{figure}[t]
\centering
\includegraphics[scale=0.4]{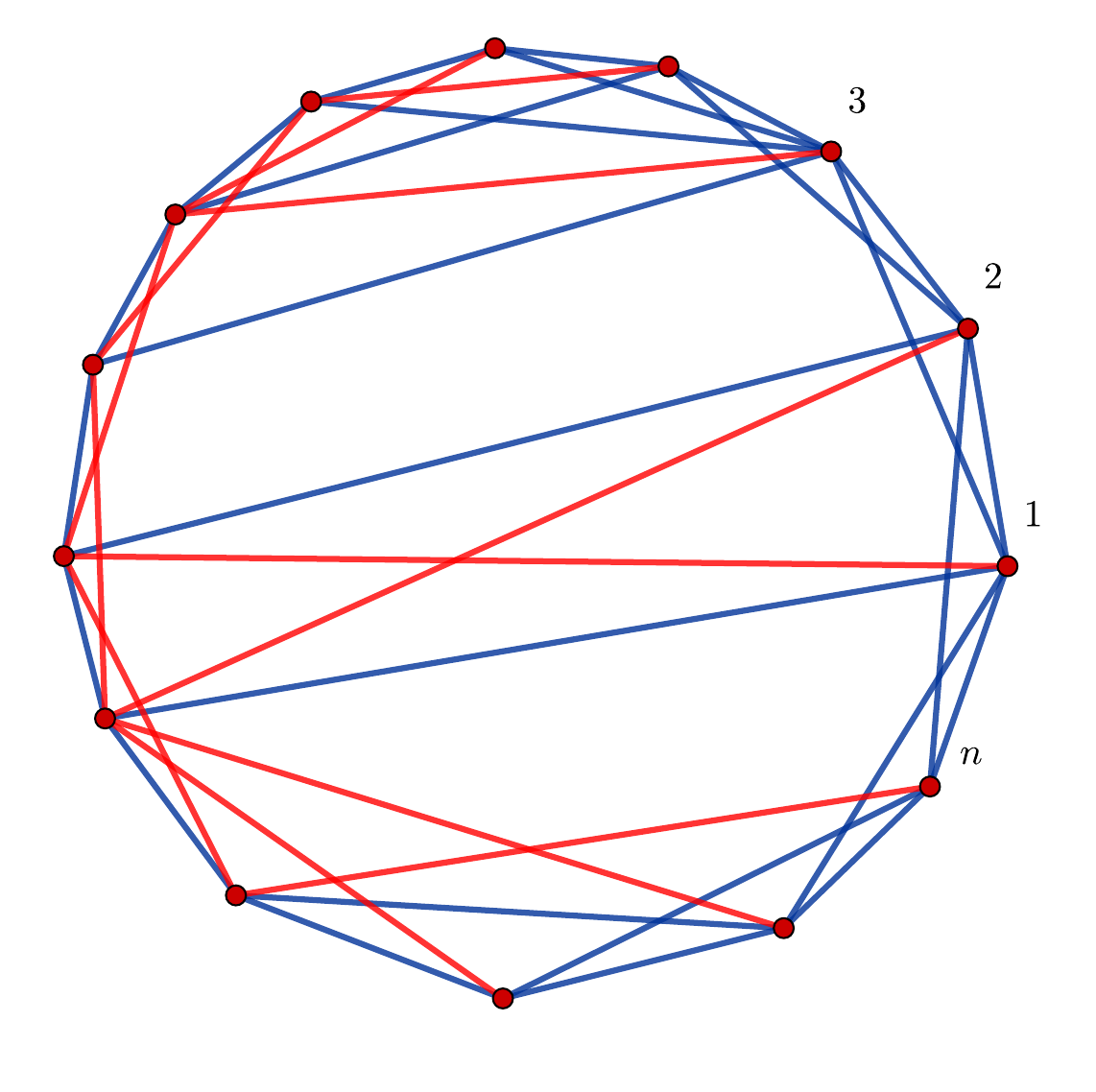}
\captionsetup{labelsep=colon,margin=1.3cm}
\caption{A convex braced polygon in blue with the dual braces in red} \label{fig:dualPoly}
\end{figure}

To prove Theorem \ref{m3p}, we first need some properties of a dual polygon. 

\begin{proposition}\label{dualsegmentcount}
Let $G$ be a minimally 3-connected braced polygonal circuit with $n$ vertices. For every connected segment of $k<n$ ($k>1$) vertices in the outer polygon, there can be at most $k-2$ dual braces in the induced subgraph on this segment of a dual polygon $G^*$. 
\end{proposition}
\begin{proof}
We proceed by induction. The case for $k=2$ is trivial. Suppose that the proposition is true up to $k-1$ vertices, we prove it for $k$. 

Consider the segment $\{1,2,...,k\}$. Since $G$ is 3 vertex connected, there exists a brace $(\alpha,\beta)$ such that $\beta\in\{2,...,k-1\}$ and $\alpha\in\{k+1,...,n\}$. Considering the two subsegments $\{1,...,\beta\}$ and $\{\beta,...,k\}$, they can have at most $\beta-2$ and $k-\beta-1$ dual braces in the corresponding subsegments according to the induction hypothesis. At most one dual brace can cross $(\alpha,\beta)$. Therefore, we can have at most 
\begin{equation*}
    (\beta-2)+(k-\beta-1)+1=k-2
\end{equation*}
dual braces on the segment $\{1,...,k\}$. 
\end{proof}

\begin{proposition}\label{dual3vc}
Let $G$ be a minimally 3-connected braced polygonal circuit with $n$ vertices. Each dual polygon $G^*$ of $G$ is 3 vertex connected.
\end{proposition}
\begin{proof}
We proceed by contradiction. Suppose that the proposition is false and $\{1,k\}$ disconnects $G^*$. In other words, no dual brace can cross $\{1,k\}$ and all dual braces are distributed within two subsegments. 

First, observe that $(1,k)$ cannot be a dual brace itself. Otherwise, the segment $\{1,2,...,k\}$ has at most $k-2$ dual braces, and the segment $\{k,k+1,...,1\}$ has at most $n-k$ dual braces by proposition \ref{dualsegmentcount}. Since $(1,k)$ exists in both segments, there are at most $n-3$
dual braces in $G^*$, contradicting $G$ being a circuit. 

Next, each induced subgraph of $G^*$ on $\{1,2,...,k\}$ and $\{k,k+1,...,1\}$ must be 2 vertex connected. Otherwise, some vertex further splits a segment into 2 subsegments. By proposition \ref{dualsegmentcount}, that segment cannot have maximum number of dual braces. However, we need both segments to have the maximum number of dual braces, i.e. $k-2$ and $n-k$, so that the dual graph must have $n-2$
dual braces as proposed. 

Finally, since $G$ is 3 vertex connected, there exists a brace $e$ from $\{2,...,k-1\}$ to $\{k+1,k+2,...,n\}$. However, since both $\{1,2,...,k\}$ and $\{k,k+1,...,1\}$ are 2 vertex connected, $e$ must cross two dual braces. This gives us a contradiction. 
\end{proof}

\begin{proposition}\label{d3c}
Let $G$ be a minimally 3-connected braced polygonal circuit with $n$ vertices. All degree 3 vertices of $G$ are consecutive on the outer polygon. 
\end{proposition}
\begin{proof}
Suppose that we have a segment of $G$, $\{k+1,...,n\}$, with all vertices having degree 3. If $k=1$, then the degree 3 vertices are trivially consecutive. We consider $k>1$.  If every vertex of the segment is connected to $\{1,...,k\}$ through a brace, then there are at most $n-k$ braces with one end in the segment. Therefore, the induced subgraph of $G$ on $\{1,...,k\}$ has the maximal number: $k-2=(n-2)-(n-k)$ braces. For each brace that has both ends in $\{k+1,...,n\}$, the number of braces in $\{1,...,k\}$ increases by one. Therefore, there cannot be any internal brace in the segment $\{k+1,...,n\}$ in order for $\{1,...,k\}$ to have at most $k-2$ braces. 

Suppose that there is a segment of $G$, $\{k+1,...,n\}$, of maximal length with every vertex being degree 3, we can assume vertices $1,k$ have degree at least 4 and that the segment $\{1,...,k\}$ has $k-2$ braces in $G$.  We prove the following: if the induced subgraph of $G$ on a segment $\{1,...,k\}$ has $k-2$ braces, and if both $1$ and $k$ has degree at least 4, then each vertex $2,...,k-1$ must have degree at least 4. 

We proceed by induction on $k$. The claim is trivial for $k=2$. Suppose that the claim is true for segments with length $1,...,k-1$, we show it for length $k$. 

Since $G^*$ is 3-vertex connected by proposition \ref{dual3vc}, there exists a dual brace $(\alpha,\beta)$ such that $\alpha\in\{k+1,...,n\}$ and $\beta\in\{2,...,k-1\}$. Since only one brace can cross $(\alpha,\beta)$, each subsegment $\{1,...,\beta\}$ and $\{\beta,...,k\}$ must have maximum number of braces, $\beta-2$ and $k-\beta-1$ respectively. If $\beta$ has degree at least $4$, then the proof is finished by the induction hypothesis. 

If $\beta$ has degree $3$, then $(\beta-1,\beta+1)$ can be a dual brace. This is demonstrated in Figure \ref{fig:prop24}. There are two cases: $\beta\in\{3,...,k-2\}$ or $\beta\in\{2,k-1\}$. In either case, we show a contradiction. 

\begin{figure}[t]
\centering
\includegraphics[scale=0.4]{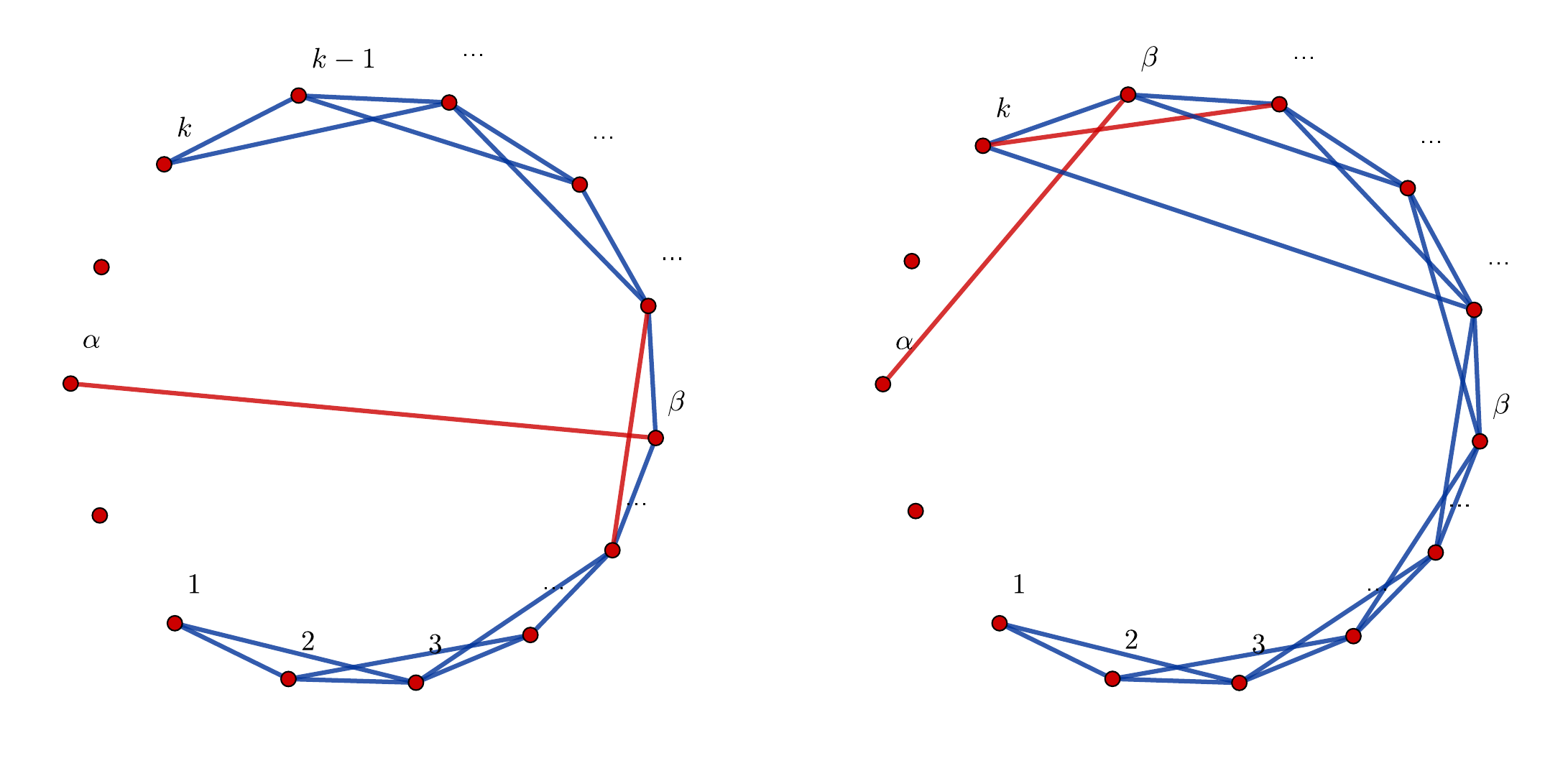}
\captionsetup{labelsep=colon,margin=1.3cm}
\caption{The braces are blue and the dual braces are red. If $\beta$ is in the middle of the segment $(1,...,k)$, the segment needs 3 more braces but it can only have 2 more crossing the two dual braces. If $\beta=k-1$, then the brace connected to $k$ must stay inside $\{1,...,k\}$. In this case, $(\alpha,k)$ will separate the graph} \label{fig:prop24}
\end{figure}

Case 1: If $\beta\in\{3,...,k-2\}$, then $\{1,...,\beta-1\}$, $\{\beta-1,...,\beta\}$, $\{\beta,...,\beta+1\}$, and $\{\beta+1,...,k\}$ are four subsegments separated from each other by the two dual braces $(\alpha,\beta)$ and $(\beta-1,\beta+1)$. They can have at most $\beta-3$, $0$, $0$, $k-\beta-2$ braces, respectively, within each subsegment. Furthermore, $2$ more braces can have two ends in distinct subsegments by crossing a dual brace. This gives only $(\beta-3)+(k-\beta-2)+2=k-3$ braces on the segment $\{1,...,k\}$, contradicting our assumption that $\{1,...,k\}$ has $k-2$ braces. This case is on the left side of Figure \ref{fig:prop24}.

Case 2: If $\beta=2$ or $\beta=k-1$, without loss of generality, we can assume $\beta=k-1$. By our assumption of $k$ having degree at least 4, $\alpha\not=k+1$. We proceed to show the contradiction that $G$ cannot be $3$ vertex connected. The segment $\{1,...,k-1\}$ has exactly $k-3$ braces because only one brace can cross $(\alpha,\beta)$. In order for $\{1,...,k\}$ to have $k-2$ braces, the brace whose dual is $(\alpha,\beta)$ must be in the segment $\{1,...,k\}$. If $k\ge 4$, then the brace with one end $k-1$ has its other end in $\{1,...,k-2\}$ in order for the number of braces in $\{1,...,k-1\}$ to be $k-3$. In this case, $(\alpha,k)$ separates the graph as demonstrated by the right side of Figure \ref{fig:prop24}. If $k=3$, $(1,3)$ is the only possible brace with dual $(\alpha,\beta)$. Furthermore, $\alpha\not=4,n$ when $k=3$ because vertex $1$ has degree at least 4 by our assumption. As a result, either $(\alpha, 3)$ or $(\alpha,1)$ will separate the graph, depending on whether the brace with one end $2$ has the other end in $\{4,5,...,\alpha\}$ or $\{\alpha+1,\alpha+2,...,n\}$.
\end{proof}

\begin{proposition}\label{d4c}
Let $G$ be a minimally 3 vertex connected braced polygonal circuit with $n$ vertices. If the segment $\{n,1,2,...,k,k+1\}$ has $k$ braces and vertices $1,...,k$ all have degree at least 4 in $G$, then $(n,2),(1,3),...,(k-1,k+1)$ are braces of $G$. 
\end{proposition}
\begin{proof}
We proceed by induction on $k$. The claim is trivial for $k=1$. Suppose that the proposition is true for $1,...,k-1$, we prove it for $k$. 

Since the $G^*$ is 3-vertex connected by proposition \ref{dual3vc}, there exists a dual brace $(\alpha,\beta)$ such that $\alpha\in\{k+2,...,n-1\}$ and $\beta\in\{1,...,k\}$. Because only one brace can cross $(\alpha,\beta)$, each smaller segment $\{n,1,...,\beta\}$ and $\{\beta,...,k,k+1\}$ must have maximum number of braces in $G$.

If $\beta\in\{2,...,k-1\}$, by induction hypothesis, $(n,2),...,(\beta-2,\beta)$ and $(\beta,\beta+2),...,(k-1,k+1)$ are braces. It remains to show $(\beta-1,\beta+1)$ is a brace. We have used $k-1$ braces, so there must be only one remaining brace on the segment $\{n,1,...,k,k+1\}$. Since both subsegments $\{n,1,...,\beta\}$ and $\{\beta,...,k,k+1\}$ have the maximum number of braces, the remaining one must cross the dual brace $(\alpha,\beta)$. 

\begin{figure}[t]
\centering
\includegraphics[scale=0.4]{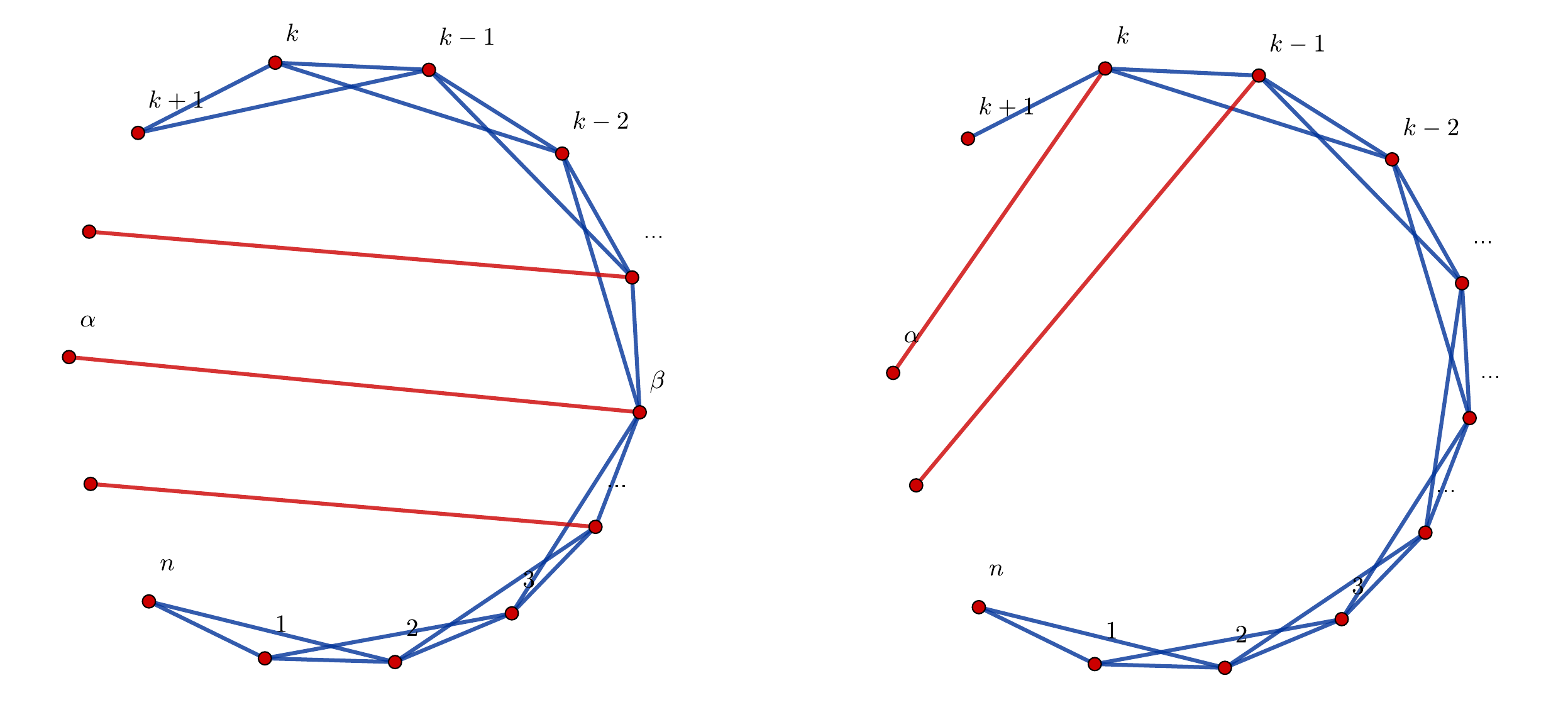}
\captionsetup{labelsep=colon,margin=1.3cm}
\caption{ The dual braces cannot have their left ends in either $k+1$ or $n$. Otherwise, either $1$ or $k$ cannot have degree at least 4} \label{fig:prop25}
\end{figure}

Consider the dual braces of $(\beta,\beta-2)$ and $(\beta,\beta+2)$, they must have one end in $\{\beta-1,\beta+1\}$ and the other end in $\{k+1,...,n\}$. If both of them have the other end being either $k+1$ or $n$, then $(k+1,n)$ is a valid candidate for the last brace in $\{1,...,k\}$, otherwise $(\beta-1,\beta+1)$ is the only choice. The earlier case cannot occur because both $1$ and $k$ have degree at least $4$. This is shown on the left of Figure \ref{fig:prop25}. 

If $\beta=1$ or $\beta=k$, without loss of generality we assume $\beta=k$, by the induction hypothesis, $(n,2),...,(k-2,k)$ are braces. It remains to show $(k-1,k+1)$ is a brace. By counting we can have only one more brace and one end of it must be $k+1$. Consider the dual brace of $(k-2,k)$. One end of it must be $k-1$, and the other end can only be one of $k+2,...,n-1$ so that vertex $1$ and $k$ can have degree at least $4$. This is shown on the right of Figure \ref{fig:prop25}. In this case, $(k-1,k+1)$ must be a brace. 
\end{proof}

\medskip

We are now ready to prove Theorem~\ref{thm:unit_interval}.

\medskip

\noindent\textbf{Proof of Theorem~\ref{thm:unit_interval}.}
Suppose that $G$ is minimally 3-connected.
Notice that Propositions \ref{d3c} and \ref{d4c} give the first two parts of the unique interval property. To show the last property, suppose that the two lines $(v_i,v_k)$ and $(v_j,v_l)$ intersect in the interior, $v_i,v_j\in I$ and $v_k,v_l\in V-I$. Without loss of generality, we assume $i<j$. If $|j-i|\ge 2$, then deleting edge $(i,i+2)$ (guaranteed to exist by Propositions \ref{d3c} and \ref{d4c}) will not affect 3-connectivity. To see this, deleting the edge $(i,i+2)$ only exposes $v_{i+1}$ to the interior, but no dual brace from $v_{i+1}$ can separate the graph by connecting $v_{i+1}$ to $V-I$ without crossing $(v_i,v_k)$ or $(v_j,v_l)$. 
Hence, $G$ satisfies the unique interval property.

Conversely, suppose that $G$ satisfies the unique interval property with respect to an interval $I$.
Since each vertex in $V-I$ has degree three,
$G-e$ is not 3-connected if an edge $e$ is incident to a vertex in $V-I$.
Suppose that an edge $e=(i,i+2)$ is deleted with $i, i+2\in I$. Then the last condition of the universal interval property guarantees that a line through $i+1$ can separate the graph.
Hence $G$ is minimally 3-connected.
\qed

\if0
\section{(shinichi) Braced Polygons with Proper Stresses}\label{Sect:Braced}
In this section, we shall strengthen Theorem~\ref{convex} by restricting our attention to special families of braced polygons.
Our discussion is motivated from the result by Geleji and Jordan~\cite{robust}, which characterizes the global rigidity of all strictly convex realizations when the underlying graph is a circuit in the generic rigidity matroid.

\subsection{Proper Stresses and Super Stability}\label{subsec:3-1}
Our tool to prove global rigidity (or even universal rigidity) is
super stability based on self-stresses.

\begin{defn}
For a configuration $(G,\p)$, a \textit{stress} is a function that maps the set of edges to real numbers, $\omega:E\to \mathbb{R}$. The stress is in \textit{equilibrium} if $\sum_j \omega_{ij}(\mathbf{p}_i-\mathbf{p}_j)=\mathbf{0}$ for every vertex $i$. A braced polygon is \textit{properly stressed} if it has an equilibrium stress that is positive on the boundary polygon and negative on the interior edges. The \textit{stress matrix} $\Omega$ of a stress $\omega$ is an $n\times n$ symmetric matrix defined as $\Omega_{ij}=\omega_{ji}$ for each edge $e_{ij}$, and that each row sums to $0$. 
\end{defn}

\begin{defn}
    A framework $(G,\p)$ of dimension $d$ with stress matrix $\Omega$ is \textit{super stable} if $\Omega$ is positive semidefinite with rank $n-d-1$, and all affine transformations that preserve the length of all edges are trivial motions of the framework. 
\end{defn}

A basic theorem in \cite[Theorem 5.14.1]{book} states that super stable frameworks are universally rigid. The reverse is not true. See Figure \ref{fig:visual}E in the plane, and see \cite{iterative} for the general situation.

For braced polygons, the following sufficient condition for super stability is useful.

\begin{thm}[Connelly~\cite{polygon}, see also Theorem 5.14.4 in \cite{book}]\label{thm:connelly}
Suppose a strictly convex braced polygon $(G,\bp)$ is  properly stressed.
Then it is super stable for the corresponding stress matrix.
\end{thm}

\subsection{Global Rigidity of Minimally 3-connected Braced Polygons}\label{subsec:3-2}
In view of Theorem~\ref{thm:connelly}, the next natural question would be to identify strictly convex braced polygons having proper stresses. 
We shall address this question for the family of minimally 3-connected braced polygons, whose formal definition is given as follows.

\begin{defn}
A braced polygon is \textit{minimally $k$-connected} if it is $k$ vertex connected, and removing any brace (interior edge) would cause it to be no longer $k$ vertex connected. 
\end{defn}

The following proposition explains how infinitesimal rigidity is helpful
for checking the existence of proper stresses.

\begin{proposition}\label{prop:inf_proper_stress}
Let $G$ be a minimally 3-connected braced polygon graph.
Suppose that all strictly convex realizations of $G$ are infinitesimally rigid and $G$ has a properly stressed strictly convex realization. 
Then all strictly convex realizations of $G$ are properly stressed.
\end{proposition}
\begin{proof}
Since all strictly convex realizations of $G$ are infinitesimally rigid
and the number of subgraphs of $G$ is finite,
there is an open cover $N_1,\dots, N_s$ of the configuration space of strictly convex realizations of $G$ such that 
a subgraph $G_i$ of $G$ forms a minimally infinitesimally rigid framework $(G,\bp)$
for any $\bp\in N_i$.
Over $N_i$, the rigidity matrix can be solved with respect to the row vectors associated with $G_i$ (after an appropriate pin down), so the space of equilibrium stresses can be described as a ration function in the entries of $\bp$.
This implies that for any continuous path $\bp_t\ (t\in [0,1])$ in the configuration space
and a non-zero stress $\omega_0$ of $(G,\bp_0)$ there is a non-zero equilibrium stress $\omega_t$ of $(G,\bp_t)$ such that the stress varies continuously with respect to the configuration.

By the assumption, $G$ has a strictly convex realization $(G,\bp_0)$ having a proper stress $\omega_0$. Since the configuration space of a convex polygon with fixed 
graph is connected, we can continuously deform it into any convex polygon $(G,\bp_1)$ 
by a continuous path $\bp_t\ (t\in [0,1])$ within the space of convex polygons. 
Then a non-zero equilibrium stress $\omega_t$ of each $(G,\bp_t)$ can be taken as above.

Let us look at the continuous change of $\omega_t$ from $t=0$ till $t=1$.
Suppose some subset of the braces hits the zero stress first. Then the configuration without these braces is super-stable by Theorem~\ref{thm:connelly},
contradicting the minimal 3-connectivity of $G$. 
If a boundary edge hits the 0 stress first, then a vertex connected to it cannot have an equilibrium stress unless all edges connected to it have the $0$ stress. This would force the whole configuration to have the $0$ stress, contradicting 
that $\omega_t$ is non-zero. 
Thus, $\omega_1$ is a proper stress of $(G,\bp_1)$.
\end{proof}

\medskip

The following theorem is a key technical observation.

\begin{thm}\label{stressExist}
    Every minimally 3-connected braced polygon $P$ has a strictly convex configuration such that every boundary edge has a positive stress and every brace has a negative stress. 
\end{thm}

Proving Theorem \ref{stressExist} requires several additional ingredients that will be proved individually. First, we show there is an inductive procedure to produce any 3-connected braced polygon. Next, we show that if a convex but not strictly convex braced polygon is properly stressed, then there exists a nearby strictly convex configuration with desired stress by minimizing a certain energy function. 

\begin{proposition}\label{operation}
    Every 3-connected braced polygon $P$ with at least 4 vertices can be constructed by adding one brace at a time by the following operations starting from $Q=K_4$:
\begin{itemize}
    \item remove a boundary edge $(i,j)$, add a new vertex $k$ and create two new boundary edges $(i,k)$ and $(j,k)$, and connect $k$ to another vertex. 

    \item remove two boundary edges (possibly with one overlapping vertex) $(i_1,j_1)$ and $(i_2,j_2)$, add four boundary edges $(i_1,k_1),(j_1,k_1),(i_2,k_2),(j_2,k_2)$ and a brace $(k_1,k_2)$.

    \item add a brace between two existing vertices. 
\end{itemize}

The graph $Q$ is 3-connected in every step. Furthermore, if $P$ is minimally 3 vertex connected, then it is constructed by only using the first 2 operations. 

\end{proposition}
\begin{proof}
    Since $P$ is 3-connected, every brace must be crossed by another brace in the interior of the polygon and all braces must be connected by crossing points. Take 2 braces that cross each other and identify them with the 2 braces in $Q=K_4$ together with 4 vertices. 
    
\begin{figure}[H]
\centering
\includegraphics[scale=0.6]{Sequence.pdf}
\captionsetup{labelsep=colon,margin=1.3cm}
\caption{The degree 2 vertices are shown only to make the vertex correspondence clearer. They don't exist in our construction until they are added through a new brace. In each step, one brace that crosses existing braces is added, so the new graph must stay 3 vertex connected.} \label{fig:Sequence}
\end{figure}

    If $P\not= Q$, then there exists a brace $(k_1,k_2)$ from $P$ that crosses a brace identified in $Q$. Otherwise, the braces in $P$ identified in $Q$ and the remaining braces never cross each other, contradicting $P$ being 3-connected. Depending on the number of vertices in $k_1$ and $k_2$ identified in $Q$, choose one of the 3 operations so that the new edge $(k_1',k_2')$ has the same crossing pattern with the existing braces in $Q$ as $(k_1,k_2)$ with the corresponding braces in $P$. This procedure is demonstrated in Figure \ref{fig:Sequence}. 

    To prove the last statement, we show that if we do the last operation, then the added edge will never become necessary for the 3-connectivity. Therefore, $P$ is not minimal if the last operation is used in the process. 

    Suppose that we can add $(k_1,k_2)$ to $Q$ by the last operation, then the new graph is immediately no longer minimal. Since $Q$ is 3-connected without $(k_1,k_2)$, there must be a path $L$  from $k_1$ to $k_2$ in the interior of the polygon joined by the crossings without $(k_1,k_2)$. Therefore, any edge added later that crosses $(k_1,k_2)$ must also cross the path $L$. As a result, all the braces are still connected by crossings, even if we remove $(k_1,k_2)$.
\end{proof}

\begin{proposition}\label{energyfunction}
    There exists a differentiable non-negative strictly monotonically increasing real function $f$ defined on $[0,\infty)$ such that given $a,\omega,m,\delta>0$, $f(a)=\frac{1}{m}$, $f'(a)=\omega$, and $f(a+\delta)>1$. Similarly, there exists a differentiable positive strictly monotonically decreasing function $g$ defined on $(0,\infty)$ such that given $a,m,\delta>0$ and $\omega<0$, $g(a)=\frac{1}{m}$, $g'(a)=\omega$, and $g(a-\delta)>1$. 
\end{proposition}

\begin{proof}
    To construct $f$, consider the graph of the derivative $f'(x)$. Draw a curve in the first quadrant from $(0,0)$ to $(a,\omega)$ with an area above the interval $(0,a)$ being $\frac{1}{m}$, then connect $(a,\omega)$ to $(a+\delta,\frac{2}{\delta})$ with a straight line. The remaining $f'$ only has to be continuous and positive. Let $f(x):=\int_{0}^{x}f'(y)dy$. The construction of $f'$ is shown in Figure \ref{fig:fprime}. 

    \begin{figure}[H]
    \centering
    \includegraphics[scale=0.5]{fp.pdf}
    \captionsetup{labelsep=colon,margin=1.3cm}
    \caption{This is the derivative of $f$. The red region has area $\frac{1}{m}$. The blue region has area at least $1$. The rest of the function only has to be positive and continuous.} \label{fig:fprime}
    \end{figure}

    To construct $g$, consider $g'(x)$. Connect $(a,\omega)$ to $(a-\delta,-\frac{2}{\delta})$ with a straight line. $g'(x)$ can be any negative curve with an area above the interval $(a,\infty)$ converging to $\frac{1}{m}$ as $x\to\infty$. The remaining part of $g'$ just has to be continuous and negative. Now, let $g(x):=\int_{\infty}^{x}g'(y)dy$. 
\end{proof}

\begin{figure}[]
\centering
\includegraphics[scale=0.4]{addedge.pdf}
\captionsetup{labelsep=colon,margin=1.3cm}
\caption{brace $(5,6)$ is added by the second operation defined in Proposition \ref{operation}} \label{fig:addedge}
\end{figure}

\medskip
Now we are ready to prove Theorem \ref{stressExist}.

\medskip

\noindent
{\bf Proof of Theorem \ref{stressExist}:}
    We proceed by induction. Since $P$ is minimally 3-connected, it can be constructed using 2 operations according to Proposition \ref{operation}. The base case, $K_4$, can be placed on a square to generate a stress with the desired signs. For the induction step, we need to show that if a braced polygon $P$ is properly stressed in a strictly convex configuration, then the braced polygon $Q$ after adding a brace $(k_1,k_2)$ through an operation can also be properly stressed in a strictly convex configuration. Figure \ref{fig:addedge} is a case where the brace $(5,6)$ is added.

    The same embedding as $P$ with new vertices placed on existing edges (shown in Figure \ref{fig:addedge}) immediately gives an equilibrium stress $\omega$ on $Q$ where all forces (stress times length) on existing edges remain unchanged and the stress on $(k_1,k_2)$ is $0$. We call this configuration $(Q,\mathbf{p}_0)$. There are two problems that need to be solved: $(Q,\mathbf{p}_0)$ is no longer strictly convex and the stress is not strictly negative on the new brace. 

    Let $p_i$ be the coordinate of the vertex $i$ in $(Q,\mathbf{p})$, $l(\mathbf{p})$ be the length function maps a configuration $\mathbf{p}$ to $\mathbb{R}^m$, $l_{ij}(\mathbf{p})$ be the distance from $p_i$ to $p_j$, $m$ be the number of edges in $Q$, $\epsilon,\delta>0$ be real numbers, $\omega_{ij}$ be the stress on the edge $(i,j)$ with the only exception $\omega_{k_1k_2}=-\epsilon$. By Proposition \ref{energyfunction}, we can find monotonic and differentiable energy functions $E_{ij}(l^2_{ij})$ for each edge $(i,j)$ such that $E_{ij}=\frac{1}{m}$ at $\mathbf{p}_0$, $E_{ij}'(l_{ij}^2)=\omega_{ij}$, and $E_{ij}>1$ if $l_{ij}^2$ increases by $\delta$ for $\omega_{ij}>0$ or decreases by $\delta$ for $\omega_{ij}<0$. Let $E(Q,\mathbf{p})=\sum_{ij\in Q}E_{ij}(l_{ij}^2(\mathbf{p}))$. This energy function has a few good properties:
    \begin{itemize}
        \item At the given configuration $(Q,\mathbf{p}_0)$, $E(Q,\mathbf{p}_0)=\sum_{ij\in Q}\frac{1}{m}=1$
        \item Fixing a vertex $i$, $\frac{\partial }{\partial x_i}E_{ij}(l_{ij}^2)=2E_{ij}'(l^2_{ij})(x_i-x_j)$, hence a critical point of $E$ gives an equilibrium stress with value $E_{ij}'(l^2_{ij})$ on edge $(i,j)$
        \item Pulling $k_1$ and $k_2$ outwards infinitesimally in direction orthogonal to the boundary will decrease $E$, hence there exists a configurations nearby with $E<1$. 
    \end{itemize}
    
    Now we make the following observation: if $\delta$ is sufficiently small, then $l_{ij}(\mathbf{p})$ must stay close to  $l_{ij}(\mathbf{p}_0)$ in order to keep $E(Q,\mathbf{p})<1$. Suppose that a boundary edge $(i,j)$  decreases in length by $\xi$. Because the configuration $(Q,\mathbf{p}_0)$ is super stable, $\xi$ must converge to $0$ as $\delta$ goes to $0$. Therefore, we find that $l_{ij}(\mathbf{p})\to l_{ij}(\mathbf{p}_0)$ for all $\mathbf{p}$ such that $E(Q,\mathbf{p})<1$ as $\delta\to 0$. The argument for braces is similar. This implies that if $\delta$ is small enough, there is a critical point $l(\mathbf{q})$ of $E$ in the small neighborhood of $l(\mathbf{p}_0)$ giving the minimum of $E$. 

    Next, we claim that if $l(\mathbf{q})$ is close to $l(\mathbf{p}_0)$, then the configuration $\mathbf{q}$ must be close to $\mathbf{p}_0$ (up to Euclidean isometry). Consider the sequence $\delta_i=\frac{1}{i}$. Without loss of generality, we fix a vertex and the direction of an edge connected to it. Let $\{\mathbf{q}_i\}=\mathbf{q}_1,\mathbf{q}_2,...$ be a sequence of possible configurations. Since the lengths are all bounded, a configuration will always stay in a compact neighborhood. Therefore, some subsequence of $\{\mathbf{q}_i\}$ converges to a point $\Tilde{\mathbf{p}}$. If $\Tilde{\mathbf{p}}\not=\mathbf{p}_0$, then we have another configuration with $l(\Tilde{\mathbf{p}})=l(\mathbf{p}_0)$, which contradicts super stability. 

    Finally, for a chosen $\epsilon$, pick a small enough $\delta$, and let a critical point of $E$ near $\mathbf{p}_0$ be $\mathbf{q}$. $E'(l^2(\mathbf{q}))$ is an equilibrium stress. If $\delta$ is sufficiently small, no stress from $P$ can change sign because $E'$ is continuous. Notice that the stress on $(k_1,k_2)$ must be negative because our $E_{k_1 k_2}$ strictly decreases in Proposition \ref{energyfunction}. 

    To see that $\mathbf{q}$ is strictly convex, all vertices are in a small neighborhood of $\mathbf{p}$, therefore, every vertex that was strictly convex stays so with a sufficiently small $\delta$. For the newly added vertices, they have degree 3 with positive stress on the boundary and negative stress on a brace, so they must be strictly convex. This concludes the last part of Theorem \ref{stressExist}. 
\qed

\medskip

    By the same energy minimization argument, if an edge is not necessary for the 3 vertex connectivity, then it can have either positive or negative stress. We can pick $\epsilon$ to be either positive or negative in this case and a small $\delta$. A point in the small neighborhood of a strictly convex polygon must also be strictly convex.

\medskip

We are now ready to prove the main theorem of this section.
\begin{thm}\label{thm:minimally}
The following are equivalent for  a minimally 3-connected braced polygon graph $G$.
\begin{itemize}
\item[(a)]  All strictly convex polygon frameworks $(G,\bp)$ are infinitesimally rigid.
\item[(b)]  All strictly convex polygon frameworks $(G,\bp)$ are globally rigid.
\item[(c)] All strictly convex polygon frameworks $(G,\bp)$ are super stable.
\item[(d)] All strictly convex polygon frameworks $(G,\bp)$ are properly stressed.
\end{itemize}
\end{thm}
\begin{proof}
The equivalence between (c) and (d) follows from Theorem~\ref{thm:connelly}.
Super stability is a sufficient condition for global rigidity, which means (c) implies (b).
(b) implies (a) by Theorem~\ref{convex}.
(a) implies (d) by Proposition~\ref{prop:inf_proper_stress} and Theorem~\ref{stressExist}.
\end{proof}

Minimal 3-connectivity is necessary in Theorem~\ref{thm:minimally}.
The right framework in Figure~\ref{fig:non-global} is an example that is not globally rigid even though the underlying graph is infinitesimally rigid for any strictly convex realizations.

A well-known inductive graph operation in rigidity theory is Henneberg type 1 operation, which adds a new vertex with two edges incident to it.
We say that a graph is of Henneberg type 1 if it can be constructed from a triangle by Henneberg type 1 operations.
Recall that a Henneberg type 1 operation preserves infinitesimal rigidity of frameworks as long as vertices are placed in general position, and hence a Henneberg type 1 braced polygon becomes infinitesimally rigid for any strictly convex realization.
An immediate corollary is the following.

\begin{corollary}\label{cor}
Let $G$ be a braced polygon graph.
Suppose that $G$ is minimally 3-connected and contains a spanning Henneberg type 1 subgraph.
Then every strictly convex polygon realization of $G$  is super stable with a proper stress.
\end{corollary}
We will see in the next subsection that braced polygonal circuits studied by  Geleji and Jord{\'a}n~\\\cite{robust} are examples of Corollary~\ref{cor}.
Other simple examples are minimally 3-connected braced maximal outerplanar graphs since  maximal outerplanar graphs are of Henneberg type 1.

\subsection{Braced polygonal circuits}
In this subsection we shall discuss a relation between Theorem~\ref{thm:minimally}
and the main theorem of Geleji and Jord{\'a}n~\cite{robust}.
Our focus here is the family of braced polygons that are circuits in the generic rigidity matroid. 

\begin{defn}
A graph $G$, with with $n$ vertices and $m$ edges, is called a \textit{(planar) circuit} if $m=2n-2$ and for every subgraph on $k$ vertices such that $2\leq k<n$, the number of edges is smaller than or equal to $2k-3$. 
\end{defn}

 Geleji and Jord{\'a}n~\cite{robust} have shown that the following unique interval property characterize braced polygonal circuits that are super stable for all strictly convex realizations.
 Without loss of generality, we assume that the vertices are labeled counterclockwise on the boundary of the outside Hamiltonian circuit for braced polygons. 
\begin{defn}
    We say that a braced polygon $G$ satisfies the 
    \textit{unique interval property} \\\cite{robust} if \vspace{-\topsep}\begin{enumerate}
        \item there exists a connected segment of the outside polygon $I$ of vertices $\{v_1,...,v_k\}$ where $k\ge 1$ such that $\deg (v_i)\ge 4$ for $v_i\in I$ and $(v_{i-1},v_{i+1})$ is an edge, 
        \item $\deg(v_i)=3$ for $v_i\in V-I$ and no other edges exist between two vertices in $V-I$ except $(v_n,v_2)$ when $k=1$, and
        \item if two edges connecting $I$ to $V-I$ intersect in the interior of the polygon, then the two vertices in $I$ are connected by an edge of the outer polygon. 
    \end{enumerate}
\end{defn}

It turns out that this property is equivalent to minimally 3-connectivity for braced polygonal circuits. 
We shall give a combinatorial proof of this fact.
\begin{thm}\label{thm:unit_interval}
Let $G$ be a braced polygonal circuit.
Then $G$ is minimally 3-connected if and only if 
$G$ satisfies the unique interval property.
\end{thm}

For the proof, we introduce the idea of a dual polygon for minimally 3-connected braced polygons. 
\begin{defn}
For a brace $e$ in a minimally 3-connected braced polygon graph $G=(V,E)$, a \textit{dual brace} $e'$ is a pair of vertices that disconnect $(V,E-\{e\})$. A \textit{dual polygon graph} $G^*$ of $G$ is the polygon with each brace $e$ of $G$ replaced by a dual brace $e'$.
\end{defn}

Observe that in order for the dual brace to disconnect $(V,E-\{e\})$, each dual brace crosses exactly one brace. Therefore, no two braces can share the same dual brace. Since there are equal numbers of braces and dual braces, each brace crosses exactly one dual brace. Note also that for any given brace, there may be more than one way to choose a dual brace. An example of a convex braced polygon and its dual is given in figure \ref{fig:dualPoly}.

\begin{figure}[H]
\centering
\includegraphics[scale=0.4]{dualPoly.pdf}
\captionsetup{labelsep=colon,margin=1.3cm}
\caption{A convex braced polygon in blue with the dual braces in red} \label{fig:dualPoly}
\end{figure}

To prove Theorem \ref{m3p}, we first need some properties of a dual polygon. 

\begin{proposition}\label{dualsegmentcount}
Let $G$ be a minimally 3-connected braced polygonal circuit with $n$ vertices. For every connected segment of $k<n$ ($k>1$) vertices in the outer polygon, there can be at most $k-2$ dual braces in the induced subgraph on this segment of a dual polygon $G^*$. 
\end{proposition}
\begin{proof}
We proceed by induction. The case for $k=2$ is trivial. Suppose that the proposition is true up to $k-1$ vertices, we prove it for $k$. 

Consider the segment $\{1,2,...,k\}$. Since $G$ is 3 vertex connected, there exists a brace $(\alpha,\beta)$ such that $\beta\in\{2,...,k-1\}$ and $\alpha\in\{k+1,...,n\}$. Considering the two subsegments $\{1,...,\beta\}$ and $\{\beta,...,k\}$, they can have at most $\beta-2$ and $k-\beta-1$ dual braces in the corresponding subsegments according to the induction hypothesis. At most one dual brace can cross $(\alpha,\beta)$. Therefore, we can have at most 
\begin{equation*}
    (\beta-2)+(k-\beta-1)+1=k-2
\end{equation*}
dual braces on the segment $\{1,...,k\}$. 
\end{proof}

\begin{proposition}\label{dual3vc}
Let $G$ be a minimally 3-connected braced polygonal circuit with $n$ vertices. Each dual polygon $G^*$ of $G$ is 3 vertex connected.
\end{proposition}
\begin{proof}
We proceed by contradiction. Suppose that the proposition is false and $\{1,k\}$ disconnects $G^*$. In other words, no dual brace can cross $\{1,k\}$ and all dual braces are distributed within two subsegments. 

First, observe that $(1,k)$ cannot be a dual brace itself. Otherwise, the segment $\{1,2,...,k\}$ has at most $k-2$ dual braces, and the segment $\{k,k+1,...,1\}$ has at most $n-k$ dual braces by proposition \ref{dualsegmentcount}. Since $(1,k)$ exists in both segments, there are at most $n-3$
dual braces in $G^*$, contradicting $G$ being a circuit. 

Next, each induced subgraph of $G^*$ on $\{1,2,...,k\}$ and $\{k,k+1,...,1\}$ must be 2 vertex connected. Otherwise, some vertex further splits a segment into 2 subsegments. By proposition \ref{dualsegmentcount}, that segment cannot have maximum number of dual braces. However, we need both segments to have the maximum number of dual braces, i.e. $k-2$ and $n-k$, so that the dual graph must have $n-2$
dual braces as proposed. 

Finally, since $G$ is 3 vertex connected, there exists a brace $e$ from $\{2,...,k-1\}$ to $\{k+1,k+2,...,n\}$. However, since both $\{1,2,...,k\}$ and $\{k,k+1,...,1\}$ are 2 vertex connected, $e$ must cross two dual braces. This gives us a contradiction. 
\end{proof}

\begin{proposition}\label{d3c}
Let $G$ be a minimally 3-connected braced polygonal circuit with $n$ vertices. All degree 3 vertices of $G$ are consecutive on the outer polygon. 
\end{proposition}
\begin{proof}
Suppose that we have a segment of $G$, $\{k+1,...,n\}$, with all vertices having degree 3. If $k=1$, then the degree 3 vertices are trivially consecutive. We consider $k>1$.  If every vertex of the segment is connected to $\{1,...,k\}$ through a brace, then there are at most $n-k$ braces with one end in the segment. Therefore, the induced subgraph of $G$ on $\{1,...,k\}$ has the maximal number: $k-2=(n-2)-(n-k)$ braces. For each brace that has both ends in $\{k+1,...,n\}$, the number of braces in $\{1,...,k\}$ increases by one. Therefore, there cannot be any internal brace in the segment $\{k+1,...,n\}$ in order for $\{1,...,k\}$ to have at most $k-2$ braces. 

Suppose that there is a segment of $G$, $\{k+1,...,n\}$, of maximal length with every vertex being degree 3, we can assume vertices $1,k$ have degree at least 4 and that the segment $\{1,...,k\}$ has $k-2$ braces in $G$.  We prove the following: if the induced subgraph of $G$ on a segment $\{1,...,k\}$ has $k-2$ braces, and if both $1$ and $k$ has degree at least 4, then each vertex $2,...,k-1$ must have degree at least 4. 

We proceed by induction on $k$. The claim is trivial for $k=2$. Suppose that the claim is true for segments with length $1,...,k-1$, we show it for length $k$. 

Since $G^*$ is 3-vertex connected by proposition \ref{dual3vc}, there exists a dual brace $(\alpha,\beta)$ such that $\alpha\in\{k+1,...,n\}$ and $\beta\in\{2,...,k-1\}$. Since only one brace can cross $(\alpha,\beta)$, each subsegment $\{1,...,\beta\}$ and $\{\beta,...,k\}$ must have maximum number of braces, $\beta-2$ and $k-\beta-1$ respectively. If $\beta$ has degree at least $4$, then the proof is finished by the induction hypothesis. 

If $\beta$ has degree $3$, then $(\beta-1,\beta+1)$ can be a dual brace. This is demonstrated in Figure \ref{fig:prop24}. There are two cases: $\beta\in\{3,...,k-2\}$ or $\beta\in\{2,k-1\}$. In either case, we show a contradiction. 

\begin{figure}
\centering
\includegraphics[scale=0.4]{prop24.pdf}
\captionsetup{labelsep=colon,margin=1.3cm}
\caption{The braces are blue and the dual braces are red. If $\beta$ is in the middle of the segment $(1,...,k)$, the segment needs 3 more braces but it can only have 2 more crossing the two dual braces. If $\beta=k-1$, then the brace connected to $k$ must stay inside $\{1,...,k\}$. In this case, $(\alpha,k)$ will separate the graph} \label{fig:prop24}
\end{figure}

Case 1: If $\beta\in\{3,...,k-2\}$, then $\{1,...,\beta-1\}$, $\{\beta-1,...,\beta\}$, $\{\beta,...,\beta+1\}$, and $\{\beta+1,...,k\}$ are four subsegments separated from each other by the two dual braces $(\alpha,\beta)$ and $(\beta-1,\beta+1)$. They can have at most $\beta-3$, $0$, $0$, $k-\beta-2$ braces, respectively, within each subsegment. Furthermore, $2$ more braces can have two ends in distinct subsegments by crossing a dual brace. This gives only $(\beta-3)+(k-\beta-2)+2=k-3$ braces on the segment $\{1,...,k\}$, contradicting our assumption that $\{1,...,k\}$ has $k-2$ braces. This case is on the left side of Figure \ref{fig:prop24}.

Case 2: If $\beta=2$ or $\beta=k-1$, without loss of generality, we can assume $\beta=k-1$. By our assumption of $k$ having degree at least 4, $\alpha\not=k+1$. We proceed to show the contradiction that $G$ cannot be $3$ vertex connected. The segment $\{1,...,k-1\}$ has exactly $k-3$ braces because only one brace can cross $(\alpha,\beta)$. In order for $\{1,...,k\}$ to have $k-2$ braces, the brace whose dual is $(\alpha,\beta)$ must be in the segment $\{1,...,k\}$. If $k\ge 4$, then the brace with one end $k-1$ has its other end in $\{1,...,k-2\}$ in order for the number of braces in $\{1,...,k-1\}$ to be $k-3$. In this case, $(\alpha,k)$ separates the graph as demonstrated by the right side of Figure \ref{fig:prop24}. If $k=3$, $(1,3)$ is the only possible brace with dual $(\alpha,\beta)$. Furthermore, $\alpha\not=4,n$ when $k=3$ because vertex $1$ has degree at least 4 by our assumption. As a result, either $(\alpha, 3)$ or $(\alpha,1)$ will separate the graph, depending on whether the brace with one end $2$ has the other end in $\{4,5,...,\alpha\}$ or $\{\alpha+1,\alpha+2,...,n\}$.
\end{proof}

\begin{proposition}\label{d4c}
Let $G$ be a minimally 3 vertex connected braced polygonal circuit with $n$ vertices. If the segment $\{n,1,2,...,k,k+1\}$ has $k$ braces and vertices $1,...,k$ all have degree at least 4 in $G$, then $(n,2),(1,3),...,(k-1,k+1)$ are braces of $G$. 
\end{proposition}
\begin{proof}
We proceed by induction on $k$. The claim is trivial for $k=1$. Suppose that the proposition is true for $1,...,k-1$, we prove it for $k$. 

Since the $G^*$ is 3-vertex connected by proposition \ref{dual3vc}, there exists a dual brace $(\alpha,\beta)$ such that $\alpha\in\{k+2,...,n-1\}$ and $\beta\in\{1,...,k\}$. Because only one brace can cross $(\alpha,\beta)$, each smaller segment $\{n,1,...,\beta\}$ and $\{\beta,...,k,k+1\}$ must have maximum number of braces in $G$.

If $\beta\in\{2,...,k-1\}$, by induction hypothesis, $(n,2),...,(\beta-2,\beta)$ and $(\beta,\beta+2),...,(k-1,k+1)$ are braces. It remains to show $(\beta-1,\beta+1)$ is a brace. We have used $k-1$ braces, so there must be only one remaining brace on the segment $\{n,1,...,k,k+1\}$. Since both subsegments $\{n,1,...,\beta\}$ and $\{\beta,...,k,k+1\}$ have the maximum number of braces, the remaining one must cross the dual brace $(\alpha,\beta)$. 

\begin{figure}[H]
\centering
\includegraphics[scale=0.4]{p25.pdf}
\captionsetup{labelsep=colon,margin=1.3cm}
\caption{ The dual braces cannot have their left ends in either $k+1$ or $n$. Otherwise, either $1$ or $k$ cannot have degree at least 4} \label{fig:prop25}
\end{figure}

Consider the dual braces of $(\beta,\beta-2)$ and $(\beta,\beta+2)$, they must have one end in $\{\beta-1,\beta+1\}$ and the other end in $\{k+1,...,n\}$. If both of them have the other end being either $k+1$ or $n$, then $(k+1,n)$ is a valid candidate for the last brace in $\{1,...,k\}$, otherwise $(\beta-1,\beta+1)$ is the only choice. The earlier case cannot occur because both $1$ and $k$ have degree at least $4$. This is shown on the left of Figure \ref{fig:prop25}. 

If $\beta=1$ or $\beta=k$, without loss of generality we assume $\beta=k$, by the induction hypothesis, $(n,2),...,(k-2,k)$ are braces. It remains to show $(k-1,k+1)$ is a brace. By counting we can have only one more brace and one end of it must be $k+1$. Consider the dual brace of $(k-2,k)$. One end of it must be $k-1$, and the other end can only be one of $k+2,...,n-1$ so that vertex $1$ and $k$ can have degree at least $4$. This is shown on the right of Figure \ref{fig:prop25}. In this case, $(k-1,k+1)$ must be a brace. 
\end{proof}

\medskip

We are now ready to prove Theorem~\ref{thm:unit_interval}.

\medskip

\noindent\textbf{Proof of Theorem~\ref{thm:unit_interval}.}
Suppose that $G$ is minimally 3-connected.
Notice that Propositions \ref{d3c} and \ref{d4c} give the first two parts of the unique interval property. To show the last property, suppose that the two lines $(v_i,v_k)$ and $(v_j,v_l)$ intersect in the interior, $v_i,v_j\in I$ and $v_k,v_l\in V-I$. Without loss of generality, we assume $i<j$. If $|j-i|\ge 2$, then deleting edge $(i,i+2)$ (guaranteed to exist by Propositions \ref{d3c} and \ref{d4c}) will not affect 3-connectivity. To see this, deleting the edge $(i,i+2)$ only exposes $v_{i+1}$ to the interior, but no dual brace from $v_{i+1}$ can separate the graph by connecting $v_{i+1}$ to $V-I$ without crossing $(v_i,v_k)$ or $(v_j,v_l)$. 
Hence, $G$ satisfies the unique interval property.

Conversely, suppose that $G$ satisfies the unique interval property with respect to an interval $I$.
Since each vertex in $V-I$ has degree three,
$G-e$ is not 3-connected if an edge $e$ is incident to a vertex in $V-I$.
Suppose that an edge $e=(i,i+2)$ is deleted with $i, i+2\in I$. Then the last condition of the universal interval property guarantees that a line through $i+1$ can separate the graph.
Hence $G$ is minimally 3-connected.
\qed



The equivalence between (b) and (c) is a part of the theorem of Geleji and Jord{\'a}n~\cite[Theorem 1.2]{robust}.
We give an alternative proof based on our main theorem.
\begin{thm}\label{m3p}
    For a braced polygonal circuit $G$, the following are equivalent: \vspace{-\topsep}\begin{enumerate}
        \item[(a)] $G$ is minimally 3-connected. 
        \item[(b)] $G$ satisfies the unique interval property. 
        \item[(c)] All strictly convex polygon frameworks $(G,\bp)$ are properly stressed.
    \end{enumerate}
\end{thm}
\begin{proof}
The equivalence between (a) and (b) is Theorem~\ref{thm:unit_interval}.

To see (a) and (b) implies (c), suppose $G$ satisfies the unique interval property.
Without loss of generality, we may assume that  vertices $1,2,...,k$ have degree at least $4$ in $G$. Starting from the triangle $(n,1,2)$, all the vertices $3,4,..,k$ can be added in order by Henneberg type 1 constructions by the first property of the unique interval property. Vertices $k+1,...,n-1$ all have a brace connected to the segment $\{1,...,k\}$, so they can be added through Henneberg type 1 constructions as well. 
As a result, $G$ has a spanning Henneberg type 1 subgraph,
and (c) follows from Corollary~\ref{cor}.

To see (c) implies (a), suppose that $G$ is not minimally 3-connected.
Then, pick a spanning minimally 3-connected subgraph $H$ of $G$.
By Theorem~\ref{thm:minimally}, there is a strictly convex framework $(H,\bp)$ which is properly stressed.
If $(G,\bp)$ is not infinitesimally rigid, then we are done.
So, assume $(G,\bp)$ is infinitesimally rigid.
Then the rank of the rigidity matrix of $(G,\bp)$ is $2n-3$
whereas that of $(H,\bp)$ is less than the number of the number of edges in $H$ since $(H,\bp)$ has a non-zero equilibrium stress.
Since $G$ has $2n-2$ edges, this in turn implies that
no edge in $E(G)\setminus E(H)$ can be stressed in $(G,\bp)$
(i.e., each edge in $E(G)\setminus E(H)$ is a coloop in the rigidity matroid of $(G,\bp)$).
In other words, $(G,\bp)$ is not properly stressed.
%
\end{proof}

Theorems~\ref{thm:connelly} and~\ref{m3p} imply that 
a minimally 3-connected braced polygonal circuit is super stable for any strictly convex realization.
The bottom left framework in Figure~\ref{fig:Flipped} 
provides an example showing that the converse direction is not necessarily true.
The underlying graph of the framework is super stable for any strictly convex realization but it is not minimally 3-connected.
By Theorem~\ref{convex}, the graph is also infinitesimally rigid for any strictly convex realization.

\fi

\end{document}